\tikzset{edge/.style={-{Latex[scale=1.9]}}}
\newcommand{\sqr}{\mathbin{\square}}
\DeclareMathOperator{\Circ}{Circ}
\DeclareMathOperator{\Cay}{Cay}
\DeclareMathOperator{\Dih}{Dih}
\definecolor{mauve}{rgb}{0.58,0,0.82}
\definecolor{dkgreen}{rgb}{0,0.6,0}
\lstdefinestyle{pitonche} {
    language = Python,
    basicstyle = footnotesizettfamily,
    showspaces = false,
    showstringspaces = false,
    breakautoindent = true,
    flexiblecolumns = true,
    keepspaces = true,
    stepnumber = 1,
    xleftmargin = 0pt
}
\small\color{gray},
\theoremstyle{plain}
\newtheorem{theorem}{Theorem}[section]
\newtheorem{corollary}[theorem]{Corollary}
\newtheorem{proposition}[theorem]{Proposition}
\newtheorem{problem}[theorem]{Problem}
\newtheorem{lemma}[theorem]{Lemma}
\theoremstyle{remark}
\newtheorem*{remark}{Remark}
\newtheorem{example}[theorem]{Example}
\def\Z{\mathbb{Z}}
\newcommand{\cB}{\mathcal{B}}
\newcommand{\arxiv}[2]{\href{https://arxiv.org/abs/#1}{\texttt{arXiv:#1}} \texttt{[#2]}}
\DeclareMathOperator{\bicirc}{BiCirc}
\begin{document}

\title{Classification of quartic bicirculant nut graphs}

\author[1,2]{Ivan Damnjanović}
\author[1,3,5]{Nino Bašić}
\author[1,3,5]{Tomaž Pisanski}
\author[4,5]{Arjana Žitnik}
\affil[1]{FAMNIT, University of Primorska, Koper, Slovenia}
\affil[2]{Faculty of Electronic Engineering, University of Niš, Niš, Serbia}
\affil[3]{IAM, University of Primorska, Koper, Slovenia}
\affil[4]{FMF, University of Ljubljana, Ljubljana, Slovenia}
\affil[5]{Institute of Mathematics, Physics and Mechanics, Ljubljana, Slovenia}

\maketitle

\begin{abstract}
A graph is called a \emph{nut graph} if zero is its eigenvalue of multiplicity one and its corresponding eigenvector has no zero entries.
A graph is a \emph{bicirculant} if it admits an automorphism with two equally sized vertex orbits.
There are four classes of connected quartic bicirculant graphs.
We classify the quartic bicirculant graphs that are nut graphs
by investigating properties of each of these four classes.

\bigskip
\noindent
{\bf Keywords:}  nut graph, bicirculant graph, quartic graph, graph spectrum.

\noindent
{\bf MSC 2020:}
05C50,  
05C25, 
05C75.  
\end{abstract}

\section{Introduction}
\label{sec:intro}

The motivation for this paper comes from the recent interest for \emph{nut graphs}, i.e., graphs having zero as eigenvalue of multiplicity one, with the corresponding eigenvector 
with no zero entries \cite{CoFoGo2017, ScFa2021, gutmansciriha}.
In a series of papers \cite{BaFoPiSc2022, circulantnuts1, Da2022A, circulantnuts2}, the problem of existence of nut graphs among various families of graphs
has been investigated.
In particular, the pairs $(n, d)$ for which there exists at least one $d$-regular nut graph of order $n$ have been determined up to degree $d \le 11$ in \cite{FoGaGoPiSc2020}, and for degree $d = 12$ in \cite{circulantnuts1}.
The methods used in these papers included mainly circulant graphs.
Based on these results, \cite{circulantnuts3, circulantnuts2} expanded this approach, with \cite{Da2022A} completely resolving the problem of existence of circulant nut graphs of a given order and degree.

Note that every circulant graph is a Cayley graph and, therefore, it is vertex-transitive.
It has been observed that each vertex-transitive nut graph has an even degree \cite{FoGaGoPiSc2020}.
One can conclude that the smallest degree for which a circulant nut graph exists is four.
The quartic circulant nut graphs were classified in \cite{Da2022B}, while the cubic tricirculant nut graphs were classified in \cite{BaDPZ}. For more results on the symmetries of nut graphs, see \cite{BaDam2024, BaDamFow2024, BaFow2024, BaFowPi2024, Damnjanovic2023_ARX}.

In this paper, the complete classification of quartic bicirculant nut graphs is given.
We first define bicirculant graphs.
Then we describe in more detail the connected quartic bicirculant graphs and give the statement of our main
theorem, which gives the classification of quartic bicirculant nut graphs.

A \emph{bicirculant} is a graph of order $n = 2m$ admitting an automorphism with two vertex orbits of equal
size $m$. It can be described as follows; see for example \cite{SRGbicirc}. Given an integer $m \ge 3$ and sets $S, T, R \subseteq \Z_m$ such that $S=-S$, $T=-T$, $R \ne \emptyset$ and $0 \not\in S \cup T$, 
the graph $\bicirc(m;S,T,R)$ has vertex set 
$V=\{x_0,\dots,x_{m-1},y_0,\dots,y_{m-1}\}$ and edge set 
\[
    E = \{x_ix_{i+j} \mid i \in\Z_m, j \in S\} \cup \{y_iy_{i+j} \mid i \in\Z_m, j \in T\} \cup \{x_iy_{i+j} \mid i \in\Z_m, j \in R\} .
\]

Bicirculants can also be described as regular $\Z_m$-covers over a graph on two vertices
with possible multiple edges, loops and semi-edges; see \cite{pisanskibicirc}.
From the theory of covering graphs, it follows that we may choose the elements of the sets $S, R, T$ to be such that $0 \in R$.
Moreover, in this case, a bicirculant graph is connected if and only if the elements from $S$, $T$ and $R$ generate the group $\Z_m$; see, for example, \cite{GT}.

The well-known generalized Petersen graphs are bicirculants of degree 3. More generally, an $I$-graph $I(m; j, k)$ is a bicirculant $\bicirc(m; S, T, R)$ with $m \ge 3$, $S=\{j, -j\}$, $T=\{k, -k\}$  and $R=\{ 0 \}$, where $1 \le j, k < \frac{m}{2}$. An $I$-graph is a
generalized Petersen graph if and only if $\gcd(m,j)=1$ or $\gcd(m,k)=1$.

In this paper we consider the connected quartic bicirculants. Such graphs fall into four classes with respect to the size of $R$, which can be 1, 2, 3 or 4; see \cite{quarticet}.
We denote these classes by $\cB_1$, $\cB_2$, $\cB_3$ and $\cB_4$.
A graph $\bicirc(m; S, T, R)$ belongs to
the class $\cB_i$ if $|R| = i$.

\begin{enumerate}

\item Class $\cB_1$:\ Here $m$ needs to be even, $m \ge 4$, $S = \{a, -a, \frac{m}{2} \}$, $T = \{b, -b, \frac{m}{2} \}$ and $R=\{0\}$.
For such parameters, we denote the graph $\bicirc(m; S, T, R)$ by $B_1(m;a,b)$.
Without loss of generality, we may assume that $1 \le a \le b < \frac{m}{2}$.

\item Class $\cB_2$:\ Here we have $m \ge 3$, $S = \{a, -a \}$, $T = \{b, -b \}$ and $R=\{0, c\}$. 
For such parameters, we denote the graph $\bicirc(m; S, T, R)$ by $B_2(m;a,b,c)$.
Without loss of generality, we may assume that $1 \le a \le b < \frac{m}{2}$ and $1 \le c \le \frac{m}{2}$.

\item Class $\cB_3$:\ Here $m$ needs to be even, $m \ge 4$, $S = \{ \frac{m}{2} \}$, $T=\{ \frac{m}{2} \}$ and $R = \{0, a, b\}$. 
For such parameters, we denote the graph $\bicirc(m; S, T, R)$ by $B_3(m;a,b)$.
Observe that $B_3(m; a, b)$ is isomorphic to both $B_3(m; -a, b - a)$ and $B_3(m; -b, a - b)$.
Thus, we can assume without loss of generality that $a$ and $b$ are of the same parity and $1 \le a < b < m$.

\item Class $\cB_4$:\ Here we have $m \ge 4$, $S = \emptyset$, $T = \emptyset$ and $R = \{0, a, b, c\}$. 
For such parameters, we denote the graph $\bicirc(m; S, T, R)$ by $B_4(m; a, b, c)$.
Without loss of generality, we may assume that $1 \le a < b < c < m$. 
\end{enumerate}

The four classes of connected quartic bicirculants can be viewed as regular covers over the four possible $\mathbb{Z}_m$-voltage graphs from Figure~\ref{voltages}.
Note that a semi-edge in the base graph can occur only when $m$ is even and it must have the voltage $\frac{m}{2}$.

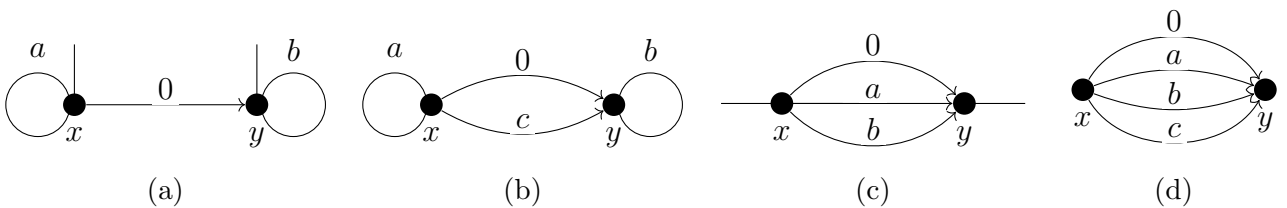
\begin{figure}[htbp]
\centering
\begin{subfigure}[b]{0.25\textwidth}
\centering
\begin{tikzpicture}[scale=0.8]
\def\x{3}
\draw (-0.6,0) circle (15pt); 
\draw (3.6,0) circle (15pt);
\node[ circle, fill= black, inner sep = 3pt, label=below:$x$] (x) at (0,0)  { };
\node[ circle, fill=black, inner sep = 3pt, label=below:$y$ ] (y) at (\x, 0) { };
\draw [->] (x) to node[above, inner sep = 2, fill = white] {$0$} (y);
\draw (0,1) -- (0,0);
\draw (3,1) -- (3,0);
\draw (x) + (-0.6,0.9) node {$a$};
\draw (y) + (0.6, 0.9) node {$b$};
\end{tikzpicture}
\caption{}
\end{subfigure}
\hspace{0.2cm}
\begin{subfigure}[b]{0.25\textwidth}
\centering
\begin{tikzpicture}[scale=0.8]
\def\x{3}
\draw (-0.6,0) circle (15pt); 
\draw (3.6,0) circle (15pt);
\node[ circle, fill= black, inner sep = 3pt, label=below:$x$] (x) at (0,0)  { };
\node[ circle, fill=black, inner sep = 3pt, label=below:$y$ ] (y) at (\x, 0) { };
\draw [->] (x) to[bend left = 30]  node[above, inner sep = 2, fill = white] {$0$}(y);
\draw [->] (x) to[bend right = 30]  node[above, inner sep = 2, fill = white] {$c$} (y);
\draw (x) + (-0.6,0.9) node {$a$};
\draw (y) + (0.6, 0.9) node {$b$};
\end{tikzpicture}
\caption{}
\end{subfigure}
\hspace{0.2cm}
\begin{subfigure}[b]{0.24\textwidth}
\centering
\begin{tikzpicture}[scale=0.8]
\def\x{3}
\node[ circle, fill= black, inner sep = 3pt, label=below:$x$] (x) at (0,0)  { };
\node[ circle, fill=black, inner sep = 3pt, label=below:$y$ ] (y) at (\x, 0) { };
\draw [->] (x) to[bend left = 45]  node[above, inner sep = 2, fill = white] {$0$}(y);
\draw [->] (x) to   node[above, inner sep = 2, fill = white] {$a$} (y);
\draw [->] (x) to[bend right = 45]  node[above, inner sep = 2, fill = white] {$b$} (y);
\draw (-1,0) --  (1,0);
\draw ( 3,0) --  (4,0);
\end{tikzpicture}
\caption{}
\end{subfigure}
\hspace{0.05cm}
\begin{subfigure}[b]{0.19\textwidth}
\centering
\begin{tikzpicture}[scale=0.8]
\def\x{3}
\node[ circle, fill= black, inner sep = 3pt, label=below:$x$] (x) at (0,0)  { };
\node[ circle, fill=black, inner sep = 3pt, label=below:$y$ ] (y) at (\x, 0) { };
\draw [->] (x) to[bend left = 60]  node[above, inner sep = 2, fill = white] {$0$}(y);
\draw [->] (x) to[bend left = 20]  node[above, inner sep = 2, fill = white] {$a$} (y);
\draw [->] (x) to[bend right = 20]  node[above, inner sep = 2, fill = white] {$b$} (y);
\draw [->] (x) to[bend right =60]  node[above, inner sep = 2, fill = white] {$c$} (y);
\end{tikzpicture}
\caption{}
\end{subfigure}

\caption{Possible $\mathbb{Z}_m$-voltage graphs for the connected quartic bicirculants. Cases (1) and (3) occur only when $m$ is even and in this case, the voltages on the semi-edges are equal to $\frac{m}{2}$.}
\label{voltages}
\end{figure}

Note that these four classes are not disjoint. 
For example, $B_2(4; 1, 1, 2) \in \cB_2$ is isomorphic to 
$B_4(4; 1, 2, 3) \in \cB_4$, and $B_2(6;1,1,3) \in \cB_2$ is isomorphic to $B_3(6;1,5) \in \cB_3$; see \cite{quarticet}.

The graphs from $\cB_4$ (with $|R| = 4$) are known as the \emph{cyclic Haar graphs} \cite{haar}.
The family of Rose Window graphs was introduced by Steve Wilson in \cite{wilsonRW}. Given integers $m \ge 3$, $1 \le a < \frac{m}{2}$ and $1 \le r < m$, the \emph{Rose Window graph} $R_m(a, r)$ is equal to $B_2(m; 1, a, m - r)$. For this reason,
we call the bicirculants from $\cB_2$ (with $|R|=2$) the \emph{generalized Rose Window graphs}.

We now state our main theorem. For this purpose, we need to introduce the following notation.
For every $x \in \mathbb{Z}, \, x \neq 0$, let $v_p(x)$ denote the power of a prime $p$ in the prime factorization of $|x|$, i.e., the unique nonnegative integer such that $p^{v_p(x)}$ divides $x$, but $p^{v_p(x)+1}$ does not divide $x$.

\begin{theorem}[Quartic bicirculant nut graph classification]\label{main_theorem}
A quartic bicirculant graph is a nut graph if and only if it is isomorphic to one of the graphs below:
\begin{enumerate}
\item the graph $B_1(m;a,b)$, where $m$ is even, $m \ge 4$, $1 \le a \le b < \frac{m}{2}$, and the following conditions hold:
\begin{enumerate}[(i)]
\item $m \equiv_4 2$;
\item $a$ and $b$ are both even; 
\item $\gcd(\frac{m}{2}, a, b) = 1$; 
\item if $5 \mid m$, then at least one of the numbers $a, b, a - b, a + b$ is divisible by five;
\end{enumerate}

\item the graph $B_2(m;a,b,c)$, where $m \ge 3$, $1 \le a \le b < \frac{m}{2}$, $1 \le c \le \frac{m}{2}$, and the following conditions hold:
\begin{enumerate}[(i)]
    \item $m$ is coprime to all of $\gcd(a - b, a + b + c)$, $\gcd(a - b, a + b - c)$, $\gcd(a + b, a - b + c)$ and $\gcd(a + b, a - b - c)$;
    \item if $v_2(m) > v_2(c)$, then neither $v_2(a)$ nor $v_2(b)$ are equal to $v_2(c) - 1$;
    \item if $12 \mid m$, then $(a + b, a - b, c) \not\equiv_{12} (\pm 2, \pm 2, \pm 3)$;
    \item if $30 \mid m$, then $(\{a + b, a - b\}, c) \not\equiv_{30} (\{\pm 3, \pm 5\}, \pm 6), (\{\pm 3, \pm 9\}, \pm 10), (\{\pm 5, \pm 9\}, \linebreak \pm 12)$;
\end{enumerate}

\item the graph $B_3(m;a,b)$, where $m$ is even, $m \ge 4$,  $1 \le a < b < m$, and $a$ and $b$ are both odd, while $\gcd(m, a) = \gcd(m, b) = 1$ and $v_2(b - a) \ge v_2(m)$.
\end{enumerate}
In particular, no graphs from the class $\cB_4$ are nut graphs. In conditions 2(iii) and 2(iv), the $\pm$ signs can be chosen independently.
\end{theorem}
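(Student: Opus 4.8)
The plan is to exploit the description of each connected quartic bicirculant $\G=\bicirc(m;S,T,R)$ as a regular $\Z_m$-cover of a two-vertex base graph. Set $\zeta=e^{2\pi\mathrm{i}/m}$ and $\lambda_U(k)=\sum_{j\in U}\zeta^{jk}$ for $U\in\{S,T,R\}$ and $k\in\Z_m$. The block-diagonalisation of the adjacency matrix induced by the $\Z_m$-action (one $2\times2$ block per character) shows that the spectrum of $\G$ is the multiset union over $k\in\Z_m$ of the spectra of the Hermitian matrices
\[
  M_k=\begin{pmatrix}\lambda_S(k)&\lambda_R(k)\\ \overline{\lambda_R(k)}&\lambda_T(k)\end{pmatrix},
\]
with $\lambda_S(k),\lambda_T(k)\in\R$ because $S=-S$ and $T=-T$; a kernel vector of $M_k$ lifts to an eigenvector of $\G$ by multiplying its two coordinates along the two orbits by the character $j\mapsto\zeta^{jk}$. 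Since $M_{m-k}=\overline{M_k}$ and $\det M_k=\lambda_S(k)\lambda_T(k)-|\lambda_R(k)|^2$ is real, the multiplicity of $0$ in the spectrum of $\G$ equals $\#\{k:\det M_k=0,\ M_k\neq0\}+2\,\#\{k:M_k=0\}$, and the lifted eigenvector is nowhere zero exactly when the kernel vector of the relevant block has no zero coordinate, i.e.\ when $\lambda_R(k)\neq0$ there. Hence $\G$ is a nut graph if and only if (a)~there is a unique $k_0\in\{0,m/2\}$ (with $k_0=m/2$ presupposing $m$ even) with $\det M_{k_0}=0$; (b)~$\lambda_R(k_0)\neq0$; and (c)~$\det M_k\neq0$ for every $k\in\{1,\dots,m-1\}\setminus\{k_0\}$.

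First I would locate $k_0$. Since $\det M_0=|S|\,|T|-|R|^2=(4-|R|)^2-|R|^2=8(2-|R|)$, this vanishes precisely for $\cB_2$, so $k_0=0$ for $\cB_2$ (and then $\lambda_R(0)=|R|=2\neq0$ automatically), while $k_0=m/2$ for $\cB_1$ and $\cB_3$, forcing $m$ even. For $\cB_4$ one has $\lambda_S\equiv\lambda_T\equiv0$, hence $\det M_{m/2}=-|\lambda_R(m/2)|^2$, so $\det M_{m/2}=0$ would force $\lambda_R(m/2)=0$, violating (b); thus no member of $\cB_4$ is a nut graph, which settles the last sentence of the theorem. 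Next, for $\cB_1,\cB_3$ the ``boundary'' requirements at $k_0=m/2$ are short parity computations: with $\varepsilon=(-1)^{m/2}$, for $\cB_1$ (where $\lambda_R\equiv1$) the equation $(2(-1)^a+\varepsilon)(2(-1)^b+\varepsilon)=1$ holds iff either $m\equiv_4 2$ with $a,b$ even, or $4\mid m$ with $a,b$ odd, and the second case always admits the further kernel $k=m/4$, so conditions 1(i)--1(ii) survive; for $\cB_3$ the requirement $(1+(-1)^a+(-1)^b)^2=1$ together with $1+(-1)^a+(-1)^b\neq0$ forces $a,b$ odd after the stated normalisation. For $\cB_2$ with $m$ even, the requirement $\det M_{m/2}=4(-1)^{a+b}-(1+(-1)^c)^2\neq0$ is exactly the prime $p=2$ instance of condition 2(i).

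The core of the argument is to decide, for $x=\zeta^k$ ranging over the nontrivial $m$-th roots of unity, whether $\det M_k=0$. After clearing denominators this reads, for $\cB_2$,
\[
  (1+x^{2a})(1+x^{2b})=x^{\,a+b-c}(1+x^{c})^2,
\]
for $\cB_1$ it reads $\bigl(2\cos\tfrac{2\pi ak}{m}+(-1)^k\bigr)\bigl(2\cos\tfrac{2\pi bk}{m}+(-1)^k\bigr)=1$, and for $\cB_3$ it reads $|1+x^{a}+x^{b}|^2=1$. The last is elementary: from the identity $1+\cos\alpha+\cos\beta+\cos(\alpha-\beta)=4\cos\tfrac\alpha2\cos\tfrac\beta2\cos\tfrac{\alpha-\beta}2$ one gets $|1+x^{a}+x^{b}|=1$ iff $x^{a}=-1$ or $x^{b}=-1$ or $x^{a-b}=-1$, so the forbidden kernels are the solutions $k\notin\{0,m/2\}$ of $ak\equiv m/2$, $bk\equiv m/2$, or $(a-b)k\equiv m/2\pmod m$; a short gcd-and-$2$-adic analysis shows these are absent exactly when $\gcd(a,m)=\gcd(b,m)=1$ and $v_2(b-a)\ge v_2(m)$, which also force connectivity. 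For $\cB_1$ and $\cB_2$ the equation is a genuine trigonometric Diophantine equation, and I would first extract its ``structured'' solutions: letting $x$ have prime order $p\mid m$ produces a kernel as soon as $p$ divides $m$ together with one of $\gcd(a-b,a+b+c)$, $\gcd(a-b,a+b-c)$, $\gcd(a+b,a-b+c)$, $\gcd(a+b,a-b-c)$, which gives condition 2(i) and subsumes connectivity; letting $x$ have $2$-power order $2^{\,v_2(c)+1}$ makes $\lambda_R(x)=0$ and yields a diagonal kernel unless $v_2(a),v_2(b)\neq v_2(c)-1$, giving condition 2(ii). For $\cB_1$, $\lambda_R\equiv1$ is never zero, so there is no analogue of 2(ii), and beyond the boundary data and connectivity there are no structured solutions.

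The main obstacle is the remaining claim that, once these structured solutions are excluded, the trigonometric Diophantine equations for $\cB_1$ and $\cB_2$ admit solutions only for a short list of residue patterns, occurring precisely when $5\mid m$ (yielding 1(iv)), $12\mid m$ (yielding 2(iii)), or $30\mid m$ (yielding 2(iv)). Proving this requires a classification of the relevant small vanishing sums of roots of unity in the spirit of Conway and Jones; concretely I would reduce both equations, via product-to-sum identities, to the shape $\cos\tfrac{2\pi(a+b)k}{m}+\cos\tfrac{2\pi(a-b)k}{m}-\cos\tfrac{2\pi ck}{m}=1$ (with an analogous expression for $\cB_1$), then either invoke such a classification or run a self-contained cyclotomic argument bounding the conductor of any solution in terms of the primes $2,3,5$ and finishing by a finite check. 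Two points need care in that check: in the sporadic tuples the $\pm$ signs may indeed be chosen independently, as the statement notes, and each excluded tuple must be shown to genuinely produce a second kernel (so the condition is necessary, not merely compatible with failure of the nut property). Finally, combining (a)--(c) with the ``structured plus sporadic'' description of the zeros of $\det M_k$, one recovers exactly the three families listed, with $\cB_4$ contributing nothing, which proves both directions of the equivalence.
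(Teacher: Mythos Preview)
Your framework (block-diagonalising into the $2\times2$ matrices $M_k$ and reducing the nut-graph question to a unique vanishing of $\det M_k$ with $\lambda_R(k_0)\neq 0$) is exactly the paper's Corollary~2.6 together with Lemma~2.4. Your handling of $\cB_3$ is equivalent to the paper's (they factor the relevant polynomial as $(x^{b-a}+1)(x^a+1)(x^b+1)$, you use the equivalent trigonometric product), and your $\cB_4$ argument is correct, though the paper's one-liner ``cyclic Haar graphs are bipartite, hence never nut graphs'' is shorter. Your derivation of the structured obstructions 1(i)--(iii) and 2(i)--(ii) also parallels the paper's Lemmas~3.1--3.2 and 4.2.

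The substantive difference, and the place where your plan remains only a sketch, is the finiteness reduction for $\cB_1$ and $\cB_2$. The paper rewrites $\det M_k=0$ as a divisibility $\Phi_f(x)\mid P(x)$ with $P$ having at most eight nonzero terms, applies the Filaseta--Schinzel theorem to strip large prime factors from $f$, proves separate lemmas ruling out $p^2\mid f$ for odd $p$, $16\mid f$, and primes $p\ge 11$, and finally runs a computer check over an explicit finite list of $f$ (nineteen values for $\cB_2$, ten for $\cB_1$, with the $\cB_1$ check outsourced to~\cite{BaDPZ}). Your alternative is a Conway--Jones classification of the weight-$8$ vanishing sum of roots of unity; this is a legitimate route, but such classifications bound the conductor by primes $p\le 8$, so the prime $7$ must still enter the finite check even though it ultimately contributes no sporadic condition---your ``primes $2,3,5$'' is slightly optimistic. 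More importantly, in both approaches the bulk of the work is the finite verification that \emph{no} sporadic tuples arise beyond 1(iv), 2(iii) and 2(iv), and your proposal does not indicate how you would organise or execute that check. Both routes end in a machine computation; Filaseta--Schinzel gives a cleaner reduction algorithm, while Conway--Jones is more structural, but neither avoids it.
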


In the rest of the paper we focus on proving Theorem \ref{main_theorem}.
Since bipartite graphs are never nut graphs (see Subsection~\ref{sec:nuts}), none of the graphs from the class $\cB_4$ can be a nut graph.
We thus consider each of the classes $\cB_1$, $\cB_2$ and $\cB_3$ in a separate section.

\section{Preliminaries}\label{sc_prel}

In this section we describe the necessary definitions and tools regarding the nut graphs, the eigenvalues of bicirculants and the cyclotomic polynomials.

\subsection{Basic results about nut graphs}\label{sec:nuts}

In the present subsection we list the main properties of nut graphs that we will need. It is well known and easy to see that nut graphs are connected and nonbipartite \cite{gutmansciriha}.

\begin{lemma}\label{lemma_conn_bip}
Every nut graph is connected. A bipartite graph is not a nut graph.
\end{lemma}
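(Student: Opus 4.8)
The plan is to treat the two assertions separately, in each case exploiting the special block structure that the hypothesis forces on the adjacency matrix, together with the defining feature of a nut graph: its null space is one-dimensional and is spanned by a vector with no zero entries.

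For the connectedness claim I would argue by contradiction. Suppose a nut graph $G$ is disconnected and partition its vertex set as $V(G) = V_1 \cup V_2$ into two nonempty parts with no edges between them, so that $A(G) = A_1 \oplus A_2$ is a direct sum. Writing a kernel eigenvector of $A(G)$ as $\mathbf{x} = (\mathbf{x}_1, \mathbf{x}_2)$ accordingly, we get $A_i \mathbf{x}_i = \mathbf{0}$ for $i = 1, 2$, and $\mathbf{x}_1, \mathbf{x}_2 \ne \mathbf{0}$ because $\mathbf{x}$ has no zero entries; hence $0$ is an eigenvalue of both $A_1$ and $A_2$, so it has multiplicity at least $2$ as an eigenvalue of $A(G)$, contradicting the nut condition. (If $G$ has more than two components one simply groups them into two blocks.)

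For the bipartite part, suppose $G$ is a bipartite nut graph with bipartition classes $A$ and $B$; a nut graph has an edge, so both classes are nonempty. Ordering the $A$-vertices first, $A(G) = \left(\begin{smallmatrix} 0 & N \\ N^{\top} & 0 \end{smallmatrix}\right)$, and splitting vectors as $\mathbf{x} = (\mathbf{x}_A, \mathbf{x}_B)$ one checks immediately that flipping the sign on the $B$-block preserves the kernel: if $A(G)\mathbf{x} = \mathbf{0}$ then $A(G)(\mathbf{x}_A, -\mathbf{x}_B) = \mathbf{0}$ as well. Now let $\mathbf{x}$ be the nowhere-zero kernel eigenvector. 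Because the null space is one-dimensional, $(\mathbf{x}_A, -\mathbf{x}_B) = \lambda (\mathbf{x}_A, \mathbf{x}_B)$ for some scalar $\lambda$; comparing $A$-coordinates gives $\lambda = 1$ since $\mathbf{x}_A \ne \mathbf{0}$, and then comparing $B$-coordinates gives $\mathbf{x}_B = \mathbf{0}$, contradicting that $\mathbf{x}$ has no zero entry.

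I do not anticipate a real obstacle: both halves are short linear-algebra arguments. The only points requiring care are the additivity of eigenvalue multiplicities under a direct sum in the first part, and ensuring in the second part that both bipartition classes are nonempty — which is why one should record at the outset that a nut graph (connected, of order at least two) contains an edge. Should one prefer to avoid the sign-flip involution in the bipartite case, an equivalent route is to note that a bipartite graph has nullity $|A| + |B| - 2\operatorname{rank}(N)$, so nullity one forces $N$ to have full rank $\min(|A|,|B|)$ and $\lvert |A| - |B| \rvert = 1$, whence every kernel vector vanishes identically on the larger class.
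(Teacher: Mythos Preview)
Your proof is correct. The paper does not actually prove this lemma; it merely records it as ``well known and easy to see'' and cites Sciriha--Gutman, so there is no in-paper argument to compare against. The block-decomposition arguments you give for both halves are the standard ones and go through without difficulty; the only delicate point is the trivial graph $K_1$, which you rightly exclude by noting that a nut graph of order at least two (the usual convention, implicit in the cited reference) must contain an edge.
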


Eigenvectors corresponding to the eigenvalue zero are also called \emph{kernel vectors}. The following lemma characterizes the kernel vectors of a given graph.

\begin{lemma}\label{local_condition_lemma}
Let $G$ be a graph and let $u$ be a nonzero vector.  
Then $u$ is a kernel eigenvector if and only if for every vertex $x$ of $G$ the following   holds:
\[
    \sum_{y \sim x} u(y) = 0.
\]
\end{lemma}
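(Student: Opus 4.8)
The plan is to unwind the definition of a kernel eigenvector in terms of the adjacency matrix $A$ of $G$ and then read off the stated condition coordinate by coordinate. Recall that, by the definition recorded just above the lemma, a kernel eigenvector is a nonzero vector $u$ indexed by the vertices of $G$ that is an eigenvector for the eigenvalue zero, i.e.\ one satisfying $Au = 0$. Here $A = (A_{xy})_{x,y \in V(G)}$ is the adjacency matrix, so that $A_{xy} = 1$ precisely when $x \sim y$ and $A_{xy} = 0$ otherwise; in particular $A$ is symmetric with zero diagonal, since $G$ has no loops.

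First I would compute the $x$-th coordinate of the vector $Au$ straight from the definition of matrix--vector multiplication:
\[
    (Au)(x) = \sum_{y \in V(G)} A_{xy}\, u(y).
\]
Since $A_{xy}$ vanishes whenever $y$ is not a neighbour of $x$, the sum collapses to a sum over the neighbours of $x$, yielding the identity $(Au)(x) = \sum_{y \sim x} u(y)$ for every vertex $x$. With this identity in hand, the equivalence follows at once: the vector equation $Au = 0$ holds if and only if each of its coordinates $(Au)(x)$ vanishes, that is, if and only if $\sum_{y \sim x} u(y) = 0$ for every vertex $x$ of $G$. As $u$ is assumed nonzero throughout, the phrases ``$u$ is a kernel eigenvector'' and ``$Au = 0$'' are interchangeable, so both directions are settled simultaneously.

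There is no substantive obstacle in this argument; it is a direct consequence of the definitions. The only point deserving a moment of care is the bookkeeping convention --- namely that the vertices of $G$ index the coordinates of $u$, and that the zero diagonal of $A$ guarantees the neighbour sum $\sum_{y \sim x} u(y)$ does not inadvertently include the term $u(x)$. Once these conventions are pinned down, the proof is essentially a single line.
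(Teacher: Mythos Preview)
Your proof is correct and follows essentially the same approach as the paper, which simply says to multiply the row of the adjacency matrix corresponding to the vertex $x$ by the vector $u$. You have merely written out this one-line observation in more detail.
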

\begin{proof}
Multiply the row of the adjacency matrix of $G$, corresponding to the vertex $x$, by the eigenvector $u$, and the result is obtained.
\end{proof} 

We call the above condition that a kernel eigenvector must satisfy the \emph{local condition} \cite{FoGaGoPiSc2020}. We now state a lemma that gives a connection between the symmetries of a graph and
the structure of the null space vectors of a nut graph. More generally, the lemma gives a connection
between the symmetries of a graph and structure of the eigenspace corresponding to a simple eigenvalue.
For the proof, see \cite[p. 135]{Cvetkovic1995}.
\begin{lemma} \label{nut_orbit_lemma}
Let $G$ be a graph and let $\pi \in \mathrm{Aut}(G)$ be its automorphism. 
Let $u$ be an eigenvector corresponding to a simple eigenvalue of $G$.
If $X = \{ x_0, x_1, \ldots, x_{k-1} \} \subseteq V(G)$ 
represents an orbit of $\pi$ such that
\[
    \pi(x_j) = x_{j+1} \qquad (j = 0, \dots, k-1),
\]
where addition is done modulo $k$, then we have that $u$ is constant on $X$, or the orbit size $k$ is even and
\[
    u(x_j) = (-1)^j \, u(x_0) \qquad  (j =0, \dots, k-1).
\]
\end{lemma}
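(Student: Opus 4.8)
The plan is to use the standard link between automorphisms and the adjacency matrix: an automorphism of $G$ acts as a permutation matrix that commutes with the adjacency matrix $A$, and a simple eigenvalue has a one-dimensional eigenspace, so any such permutation matrix must act on that eigenspace by a scalar. Unwinding that scalar action along the orbit $X$ then gives exactly the two stated possibilities.

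Concretely, let $\lambda$ be the simple eigenvalue with $Au = \lambda u$, and let $P$ be the permutation matrix of $\pi$, acting on functions $w \colon V(G) \to \R$ by $(Pw)(x) = w(\pi(x))$. Since $\pi$ preserves adjacency we have $A_{x,y} = A_{\pi(x),\pi(y)}$ for all $x,y$, which is precisely the identity $PA = AP$. Hence $A(Pu) = P(Au) = \lambda (Pu)$, so $Pu$ again lies in the $\lambda$-eigenspace. Because $A$ is real symmetric, that eigenspace has a real basis, and since it is one-dimensional every eigenvector is a scalar multiple of a fixed real one; it is therefore harmless to prove the claim for that real vector, so we may assume $u$ is real. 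As $\lambda$ is simple, the eigenspace is one-dimensional, so $Pu = c\,u$ for some $c \in \R$. Since $P$ is orthogonal, $\|Pu\| = \|u\| \ne 0$, forcing $|c| = 1$, whence $c \in \{+1,-1\}$.

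It then remains to read off $Pu = c\,u$ along the orbit. Evaluating at $x_j$ gives $u(x_{j+1}) = u(\pi(x_j)) = (Pu)(x_j) = c\,u(x_j)$, and iterating yields $u(x_j) = c^{\,j} u(x_0)$ for $j = 0,\dots,k-1$. If $c = 1$, this says $u$ is constant on $X$. If $c = -1$, then $u(x_j) = (-1)^j u(x_0)$; since $\pi(x_{k-1}) = x_0$, the same relation at $j = k-1$ gives $u(x_0) = c^{\,k} u(x_0) = (-1)^k u(x_0)$. When $u(x_0) = 0$, every $u(x_j)$ is zero, so $u$ is (trivially) constant on $X$; otherwise $(-1)^k = 1$, so $k$ is even and $u(x_j) = (-1)^j u(x_0)$, which is the second alternative.

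I do not expect a genuine obstacle here, as the argument is short and self-contained. The only points that require care are the reduction to a real eigenvector (so that the scalar $c$ is forced to be real, hence $\pm 1$, rather than merely of modulus one), the observation that simplicity of $\lambda$ is exactly what makes $Pu$ a scalar multiple of $u$ (the conclusion fails without it), and the remark that the degenerate subcase $u \equiv 0$ on $X$ arising in the $c = -1$ branch is already subsumed by the ``constant on $X$'' alternative, so that the stated dichotomy is genuinely exhaustive.
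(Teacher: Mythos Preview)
Your proof is correct and is precisely the standard argument: the permutation matrix of an automorphism commutes with $A$, hence preserves each eigenspace, and on a one-dimensional real eigenspace it must act as $\pm 1$, which along a $\pi$-orbit forces the values to be either constant or to alternate in sign with $k$ even. The paper itself does not give a proof of this lemma but simply refers the reader to \cite[p.~135]{Cvetkovic1995}, where exactly this argument appears; so your write-up is essentially a self-contained version of what the paper cites.
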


We now turn our attention to the quartic bicirculants.
The graphs from the class $\cB_4$, the cyclic Haar graphs, are obviously all bipartite. Therefore, none of them are nut graphs. We will now show that the other connected quartic bicirculant graphs are nut graphs if and only if they have a simple eigenvalue $0$.

\begin{lemma}  \label{simple_nut}
Let $G$ be a connected quartic bicirculant graph with a simple eigenvalue $0$.
Then the eigenvector corresponding to the eigenvalue $0$ has no zero entries.
\end{lemma}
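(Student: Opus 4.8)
The plan is to argue by contradiction using Lemma \ref{nut_orbit_lemma} applied to the defining bicirculant automorphism, exploiting the fact that a connected quartic bicirculant has odd order in none of the relevant cases — wait, $n = 2m$ is always even, so that is not available. Instead, the key structural fact I would use is that the cyclic shift $\rho \colon x_i \mapsto x_{i+1}, \, y_i \mapsto y_{i+1}$ is an automorphism of $G$ whose two orbits, $X = \{x_0, \dots, x_{m-1}\}$ and $Y = \{y_0, \dots, y_{m-1}\}$, each have size $m$. Let $u$ be a kernel eigenvector; since $0$ is assumed simple, Lemma \ref{nut_orbit_lemma} tells us that on each orbit $u$ is either constant or $m$ is even and $u$ alternates in sign, i.e. $u(x_j) = (-1)^j u(x_0)$ and similarly for $Y$. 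So there are only a few cases for the ``shape'' of $u$, parametrized by a scalar for each orbit and a choice of constant-versus-alternating on each.

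First I would dispose of the case where $u$ vanishes on one whole orbit: if $u$ is identically zero on $X$ (say $u(x_0) = 0$ in the constant case, or if the alternating case forced zero), then $u$ restricted to $Y$ is a kernel vector of a smaller graph and one can push the local condition to conclude $u \equiv 0$ on $Y$ as well, contradicting $u \neq 0$. More carefully: if $u$ is constant on $X$ with value $0$, the local condition at each $y_i$ reads (sum of two $u$-values on $Y$-neighbours) + (sum of $u$-values on the $x$-neighbours of $y_i$) $= 0$; the latter sum is $0$, so $u\restriction_Y$ is a kernel vector of the circulant (or near-circulant) graph induced on $Y$, which is connected or a disjoint union of circulants — in the relevant classes this forces $u\restriction_Y$ constant, hence (via the local condition at some $x_i$, which now only sees $Y$-values) equal to $0$. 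The analogous argument handles the alternating-with-value-$0$ subcase, and the roles of $X$ and $Y$ are symmetric. So from now on $u$ is nowhere zero on each orbit except possibly we must rule out the remaining genuine case.

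The heart of the argument is then: assuming $u(x_0) \neq 0$ and $u(y_0) \neq 0$, show $u$ has no zero entries — which is immediate, since in the constant case every entry on that orbit equals the nonzero constant, and in the alternating case every entry equals $\pm$ the nonzero constant. So in fact \emph{once} we know $u$ does not vanish identically on either orbit, we are done: the only way a kernel eigenvector of a simple-$0$ quartic bicirculant can have a zero entry is if it vanishes on an entire $\rho$-orbit, and the previous paragraph shows that propagates to the whole vector. I would organize the write-up as: (1) invoke Lemma \ref{nut_orbit_lemma} to get the constant/alternating dichotomy on $X$ and on $Y$; (2) observe that a zero entry forces $u$ to vanish on that entire orbit; (3) show vanishing on one orbit forces vanishing on the other via the local condition, contradicting $u \neq 0$.

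The main obstacle is step (3), namely verifying that the induced structure on a single orbit $Y$ (with edge set given by $T$, and for classes $\cB_1$, $\cB_3$ this includes the ``diameter'' jump $\frac{m}{2}$) has the property that its only kernel vector compatible with the constant-or-alternating constraint inherited from Lemma \ref{nut_orbit_lemma} is the zero vector — equivalently, that the local condition at the $x_i$'s, which after $u\restriction_Y \equiv c$ becomes $|R|\cdot c = 0$ (since each $x_i$ has exactly $|R|$ neighbours in $Y$, counted with the connection set $R$), indeed forces $c = 0$. This is clean because $|R| \geq 1$. The genuinely fiddly part is the alternating subcase on the vanishing orbit combined with $m$ even: one must check that the sum of $u$-values over the $|R|$ neighbours $y_{i+r}$, $r \in R$, of a fixed $x_i$ — now equal to $\sum_{r \in R} (-1)^{i+r} c = (-1)^i c \sum_{r\in R}(-1)^r$ — can only vanish for all $i$ if $c = 0$ or $\sum_{r \in R}(-1)^r = 0$; in the latter situation one instead uses the local condition \emph{within} $Y$ together with connectedness (the elements of $S \cup T \cup R$ generate $\mathbb{Z}_m$) to propagate zeros, which is where a short case analysis over the four classes is unavoidable. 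I expect this to be a half-page argument rather than a deep one.
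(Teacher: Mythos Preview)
Your steps (1) and (2) coincide with the paper's: invoke Lemma~\ref{nut_orbit_lemma} on the shift automorphism to see that $u$ is constant or alternating on each orbit, so a single zero entry forces $u$ to vanish on that whole orbit. The divergence is at step~(3), where you aim for a uniform local-condition argument while the paper splits by class.

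For $\cB_1$ and $\cB_3$ the paper simply observes that $|R|$ is odd, so if $u|_X \equiv 0$ then the local condition at any $x_i$ asks an odd number of values $\pm c'$ to sum to zero, impossible unless $c' = 0$. For $\cB_2$ the paper bypasses local-condition chasing entirely: it exhibits the explicit kernel vector $u(x_i) = 1$, $u(y_i) = -1$, and since the kernel is one-dimensional this vector (with no zero entries) spans it. The class $\cB_4$ is dismissed as bipartite.

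Your route also works, but two remarks. First, the detour through ``$u|_Y$ is a kernel vector of the induced circulant on $Y$, which forces $u|_Y$ constant'' is both unnecessary (Lemma~\ref{nut_orbit_lemma} already pins down the shape of $u|_Y$) and not literally correct as stated. Second, your ``fiddly'' residual case $\sum_{r\in R}(-1)^r = 0$ in fact only arises for $\cB_2$ with $c$ odd (for $\cB_1$ and $\cB_3$ that sum is odd, hence nonzero), and there the local condition at $y_i$ reads $2(-1)^{i+b}c' = 0$, so $c'=0$ in one line rather than half a page. Your plan is sound; the paper's parity trick and its explicit $\cB_2$ eigenvector are just cleaner shortcuts that sidestep the case analysis you flagged.
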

\begin{proof}
Denote by $u$ the eigenvector corresponding to the eigenvalue $0$.
Since $G$ is a bicirculant, it has an automorphism $\pi$ with two vertex orbits of equal size, say $\{x_0, \ldots, x_{m-1}\}$ and $\{y_0, \ldots, y_{m-1}\}$. By Lemma \ref{nut_orbit_lemma}, the components of $u$ corresponding
to the vertices in the same orbit of $\pi$ have the same absolute value.
So if $u$ has a zero entry, then all of the components corresponding to the vertices of one orbit of $\pi$ have to be zero, while the other components are nonzero, with the 
same absolute value. If $G$ is from the class $\cB_1$ or $\cB_3$, the local condition implies that this is not possible (an odd number of numbers with the same absolute value cannot sum up to zero).

If $G$ is from the class $\cB_2$, the eigenvalue $0$ has a nonzero eigenvector $u$ as follows:\ set the components corresponding to $x_i$ to $1$ and the components corresponding to $y_i$ to $-1$. Therefore, no eigenvector corresponding to the (simple) eigenvalue $0$ can have a zero entry.

Finally, if $G$ is from the class $\cB_4$, then $0$ is not a simple eigenvalue, since $G$ is bipartite.
\end{proof}

\subsection{Eigenvalues of bicirculants}

In this subsection we describe the eigenvalues of   bicirculant graphs. The eigenvalues of bi-Cayley graphs, and in particular, of bicirculant graphs, are known \cite{nCayleyeigenvalues}.
For a positive integer $m$, let $\omega_m$ denote the $m$-th root of unity $e^{2\pi i/m}$.

\begin{theorem}{\rm (\cite[Theorem 3.2]{nCayleyeigenvalues})}   \label{thm:bicirceigen}
The graph $\bicirc(m;S,T,R)$ has eigenvalues
\[
    \frac{\lambda_k^S+\lambda_k^T \pm \sqrt{(\lambda_k^S-\lambda_k^T)^2 +4|\lambda_k^R|^2}}{2}, \qquad k=0,\dots,m-1,
\]
where 
\[
    \lambda_k^S= \sum_{s \in S} \, \omega_m^{ks}, \qquad \lambda_k^T= \sum_{t \in T} \, \omega_m^{kt} \qquad \mbox{and} \qquad \lambda_k^R= \sum_{r \in R} \, \omega_m^{kr}.
\]
\end{theorem}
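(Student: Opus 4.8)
The plan is to exhibit an explicit unitary change of basis that block-diagonalizes the adjacency matrix into $m$ Hermitian $2 \times 2$ blocks, one for each index $k$, and then to read off the eigenvalues of each block. First I would write the adjacency matrix of $\bicirc(m;S,T,R)$ in the block form
\[
    A = \begin{pmatrix} A_S & A_R \\ A_R^{\top} & A_T \end{pmatrix},
\]
where, with respect to the orderings $x_0, \dots, x_{m-1}$ and $y_0, \dots, y_{m-1}$, the three blocks are all $m \times m$ circulant matrices, namely $(A_S)_{i,\ell} = [\,\ell - i \in S\,]$, $(A_T)_{i,\ell} = [\,\ell - i \in T\,]$ and $(A_R)_{i,\ell} = [\,\ell - i \in R\,]$. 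The off-diagonal block $A_R$ encodes the edges $x_i y_{i+j}$ with $j \in R$, so its transpose $A_R^{\top}$ is the circulant associated with the connection set $-R$.

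Next I would invoke the fact that every $m \times m$ circulant is diagonalized by the Fourier matrix $F$ with entries $F_{k,\ell} = m^{-1/2}\,\omega_m^{k\ell}$: the circulant with connection set $C \subseteq \Z_m$ acquires the eigenvalue $\sum_{c \in C}\omega_m^{kc}$ on the $k$-th Fourier mode. Conjugating $A$ by the block-diagonal unitary $U = F \oplus F$ therefore turns each block into a diagonal matrix, yielding
\[
    U^{*} A U = \begin{pmatrix} D_S & D_R \\ \overline{D_R} & D_T \end{pmatrix},
\]
where $D_S = \operatorname{diag}(\lambda_0^S, \dots, \lambda_{m-1}^S)$, and analogously for $D_T$ and $D_R$. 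Here I would stress two points: using $S = -S$ and $T = -T$, the quantities $\lambda_k^S$ and $\lambda_k^T$ are real, while the transpose block contributes $\lambda_k^{-R} = \overline{\lambda_k^R}$. A permutation similarity that pairs the two copies of each index $k$ then exhibits $U^{*}AU$ as a direct sum of the Hermitian $2 \times 2$ blocks
\[
    M_k = \begin{pmatrix} \lambda_k^S & \lambda_k^R \\ \overline{\lambda_k^R} & \lambda_k^T \end{pmatrix}, \qquad k = 0, \dots, m-1.
\]

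Finally I would compute the spectrum of each $M_k$. Its characteristic polynomial is $\mu^2 - (\lambda_k^S + \lambda_k^T)\,\mu + (\lambda_k^S\lambda_k^T - |\lambda_k^R|^2)$, whose roots are
\[
    \frac{\lambda_k^S + \lambda_k^T \pm \sqrt{(\lambda_k^S - \lambda_k^T)^2 + 4|\lambda_k^R|^2}}{2},
\]
because the discriminant $(\lambda_k^S + \lambda_k^T)^2 - 4(\lambda_k^S\lambda_k^T - |\lambda_k^R|^2)$ collapses to $(\lambda_k^S - \lambda_k^T)^2 + 4|\lambda_k^R|^2$. Since permutation and unitary similarities preserve the spectrum, taking the union over all $k$ produces the claimed list of $2m$ eigenvalues. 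The step demanding the most care is the treatment of the off-diagonal block: one must verify that $A_R^{\top}$ yields $\overline{\lambda_k^R}$ rather than $\lambda_k^R$, so that the product of the two off-diagonal entries is the modulus squared $|\lambda_k^R|^2$ and the radicand is manifestly real and nonnegative, as it must be for a Hermitian block. Tracking this conjugation is precisely what separates the correct formula from the naive one involving $(\lambda_k^R)^2$.
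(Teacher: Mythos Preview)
The paper does not supply a proof of this theorem: it is quoted from \cite[Theorem~3.2]{nCayleyeigenvalues} and used as a black box, so there is no in-paper argument to compare against. Your proposal is correct and is precisely the standard derivation one finds in the literature on semi-Cayley (bi-Cayley) graphs over abelian groups: write $A$ as a $2 \times 2$ block matrix of $m \times m$ circulants, simultaneously diagonalize all four blocks with the discrete Fourier matrix $F \oplus F$, and read off the eigenvalues of the resulting Hermitian $2 \times 2$ blocks. Your handling of the off-diagonal block is the only subtle point and you treat it correctly: $A_R^{\top}$ is the circulant for $-R$, so its Fourier eigenvalue is $\overline{\lambda_k^R}$, which is what makes the product $|\lambda_k^R|^2$ appear and guarantees a nonnegative radicand.
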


\begin{remark}
The numbers $\lambda_k^S$, $\lambda_k^T$ and $\lambda_k^R$ from Theorem \ref{thm:bicirceigen} are the eigenvalues of the circulant graphs with connection sets $S$ and $T$ and the circulant digraph with connection set $R$, respectively.
\end{remark}

\begin{corollary} \label{zeroeigenvalue}
Any graph $\bicirc(m;S,T,R)$ from classes $\cB_1$, $\cB_2$ or $\cB_3$ has an eigenvalue $0$ if and only if for some $k \in \{0, \ldots, m-1\}$ the following equality holds:
\begin{equation}\label{general_eigen}
    |\lambda_k^R|^2 =\lambda_k^S \cdot \lambda_k^T,
\end{equation}
where 
\[
    \lambda_k^S= \sum_{s \in S} \, \omega_m^{ks}, \qquad \lambda_k^T= \sum_{t \in T} \, \omega_m^{kt} \qquad \mbox{and} \qquad \lambda_k^R= \sum_{r \in R} \, \omega_m^{kr}.
\]
Moreover, $0$ is a simple eigenvalue if and only if the equality in \eqref{general_eigen} holds for exactly one $k$.
\end{corollary}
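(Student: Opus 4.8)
The plan is to read the eigenvalues off Theorem~\ref{thm:bicirceigen} and, for each index $k \in \{0,\dots,m-1\}$, to determine when the associated pair
\[
    \mu_k^{\pm} = \frac{\lambda_k^S + \lambda_k^T \pm \sqrt{D_k}}{2},
    \qquad
    D_k := (\lambda_k^S - \lambda_k^T)^2 + 4\,|\lambda_k^R|^2 ,
\]
equals zero, all the while tracking multiplicities. The first thing to record is that in each of the classes $\cB_1$, $\cB_2$, $\cB_3$ the connection sets satisfy $S = -S$ and $T = -T$, so $\lambda_k^S$ and $\lambda_k^T$ are real; hence $D_k \ge 0$, the radical is a nonnegative real number, and the $\mu_k^{\pm}$ are real.

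The computational heart is the identity
\[
    \mu_k^{+}\,\mu_k^{-}
    = \frac{(\lambda_k^S + \lambda_k^T)^2 - D_k}{4}
    = \lambda_k^S\,\lambda_k^T - |\lambda_k^R|^2 .
\]
Regarding the $2m$ numbers $\mu_k^{\pm}$ ($k = 0,\dots,m-1$) as the spectrum of $\bicirc(m;S,T,R)$ listed with multiplicity, the graph has eigenvalue $0$ if and only if $\mu_k^{+}\mu_k^{-} = 0$ for some $k$, which by the identity above is exactly condition~\eqref{general_eigen}. That disposes of the first equivalence.

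For the ``moreover'' part I would count the contribution of each index $k$ to the multiplicity of $0$. If \eqref{general_eigen} fails at $k$, then $\mu_k^{+}\mu_k^{-} \ne 0$ and $k$ contributes nothing. If \eqref{general_eigen} holds, then $\mu_k^{+}\mu_k^{-} = 0$ while $\mu_k^{+} + \mu_k^{-} = \lambda_k^S + \lambda_k^T$, so $k$ contributes $1$, unless $\lambda_k^S + \lambda_k^T = 0$, in which case $\mu_k^{+} = \mu_k^{-} = 0$ and $k$ contributes $2$; in that last case, combining $\lambda_k^S = -\lambda_k^T$ with \eqref{general_eigen} and the reality of $\lambda_k^S, \lambda_k^T$ forces $\lambda_k^S = \lambda_k^T = 0$ and then $\lambda_k^R = 0$. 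Thus $0$ is simple exactly when \eqref{general_eigen} holds for a single $k$ and that $k$ is not a degenerate index with $\lambda_k^S = \lambda_k^T = \lambda_k^R = 0$.

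The remaining, and essentially only non-routine, step is to show that within the three classes such a degenerate index never interferes: whenever \eqref{general_eigen} holds for a unique $k$, that $k$ is automatically non-degenerate. This is where the structure of the classes enters. For $\cB_1$ one has $R = \{0\}$, so $\lambda_k^R = 1 \ne 0$ for every $k$; for $\cB_3$ one has $S = T = \{m/2\}$, so $\lambda_k^S = (-1)^k \ne 0$ for every $k$; in either class no degenerate index exists at all. For $\cB_2$ one checks that $k = 0$ always satisfies \eqref{general_eigen} (indeed $\lambda_0^S = |S| = 2 = |T| = \lambda_0^T$ and $|\lambda_0^R|^2 = |R|^2 = 4$), so if \eqref{general_eigen} holds for only one index, that index must be $0$, and there $\lambda_0^S = 2 \ne 0$ excludes degeneracy. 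The converse is immediate from the contribution count: if $0$ is simple then, since each index contributes $0$, $1$, or $2$, and the contribution is positive exactly on the indices satisfying \eqref{general_eigen}, that condition can hold for precisely one $k$. I expect the book-keeping around the degenerate index --- the one place the claim is not a one-line consequence of Theorem~\ref{thm:bicirceigen} --- to be the main point to get right.
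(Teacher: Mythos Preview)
Your proof is correct and follows essentially the same approach as the paper: both derive the equivalence from Theorem~\ref{thm:bicirceigen} via the identity $\mu_k^{+}\mu_k^{-} = \lambda_k^S\lambda_k^T - |\lambda_k^R|^2$, and both reduce the simplicity claim to ruling out a degenerate index with $\lambda_k^S = \lambda_k^T = \lambda_k^R = 0$. The only difference is in how that degenerate case is excluded: the paper first uses the symmetry $k \leftrightarrow m-k$ to pin the unique index to $k \in \{0, m/2\}$ and then checks $\lambda_k^S \neq 0$ there uniformly across classes, whereas you give a class-specific argument (no degenerate $k$ at all for $\cB_1$ and $\cB_3$; for $\cB_2$, the index $k=0$ always satisfies~\eqref{general_eigen}, forcing the unique index to be $0$). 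Both routes work, and yours is arguably a bit cleaner for $\cB_1$ and $\cB_3$.
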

\begin{proof}
Suppose that the graph $\bicirc(m; S, T, R)$ has an eigenvalue $0$. Let $k$ be an integer such that 
\[
    \frac{\lambda_k^S+\lambda_k^T \pm \sqrt{(\lambda_k^S-\lambda_k^T)^2 +4|\lambda_k^R|^2}}{2} = 0.
\]
Such an integer exists by Theorem \ref{thm:bicirceigen}. Then  $(\lambda_k^S+\lambda_k^T)^2=(\lambda_k^S-\lambda_k^T)^2 +4|\lambda_k^R|^2$, whence it follows that  $|\lambda_k^R|^2 = \lambda_k^S \cdot \lambda_k^T$. Conversely, if an integer $k$ satisfies \eqref{general_eigen}, it is easy to see that the same $k$ corresponds to the eigenvalue $0$ in  Theorem \ref{thm:bicirceigen}.

If there is more than one $k$ for which \eqref{general_eigen} holds, then Theorem \ref{thm:bicirceigen} implies that $0$ is not a simple eigenvalue. If there is exactly one $k$ for which \eqref{general_eigen} holds, then $k = 0$ or $k = \frac{m}{2}$, since otherwise, \eqref{general_eigen} would also hold for $k' = m - k$. In this case, $0$ is a simple eigenvalue. Indeed, since $\lambda_k^S$ and $ \lambda_k^T$ are real numbers, the only way for $0$ to be a double eigenvalue is when $\lambda_k^S = \lambda_k^T = \lambda_k^R = 0$. However, this is impossible for a graph of class $\cB_1$ or $\cB_3$, since the values $\lambda_0^S$ and $\lambda_{m / 2}^S$ are odd. On the other hand, for a graph of class $\cB_2$, the values $\lambda_0^S$ and $\lambda_{m / 2}^S$ are either $2$ or $-2$.
\end{proof}

\subsection{Cyclotomic polynomials}

For any $f \in \mathbb{N}$, the \emph{cyclotomic polynomial} $\Phi_f(x)$ is defined as
\[
    \Phi_f(x) = \prod_{\zeta} (x - \zeta) ,
\]
where $\zeta$ ranges over the primitive $f$-th roots of unity. It is known that the cyclotomic polynomials have integer coefficients and are irreducible in $\mathbb{Q}[x]$ (see, for example, \cite[Chapter~33]{gallian}). Thus, any $P(x) \in \mathbb{Q}[x]$ contains a primitive $f$-th root of unity among its roots if and only if $\Phi_f(x) \mid P(x)$. Furthermore, if $p^2 \mid f$ for some prime $p$, then $\Phi_f(x) = \Phi_{f / p}(x^p)$. We will frequently combine this fact with the following folklore lemma.

\begin{lemma}[\hspace{1sp}{\cite[Lemma 18]{BaDPZ}}]\label{cool_div_lemma}
    Let $V(x), W(x) \in \mathbb{Q}[x], \, W(x) \not\equiv 0$, be such that $W(x) \mid V(x)$ and the powers of all the nonzero terms of $W(x)$ are divisible by $\beta \in \mathbb{N}$. Also, for any $0 \le j < \beta$, let $V^{(\beta, j)}(x)$ be the polynomial comprising the terms of $V(x)$ whose power is congruent to $j$ modulo $\beta$. Then $W(x) \mid V^{(\beta, j)}(x)$ for every $0 \le j < \beta$.
\end{lemma}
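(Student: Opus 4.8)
The plan is to exploit the fact that reducing exponents modulo $\beta$ endows $\mathbb{Q}[x]$ with a $\mathbb{Z}/\beta\mathbb{Z}$-grading: for $0 \le j < \beta$ let $A_j \subseteq \mathbb{Q}[x]$ be the subspace of polynomials all of whose nonzero terms have exponent congruent to $j$ modulo $\beta$. Then $\mathbb{Q}[x] = \bigoplus_{j=0}^{\beta-1} A_j$, and since exponents add under multiplication, $A_i \cdot A_j \subseteq A_{i+j \bmod \beta}$. In this language, $V \mapsto V^{(\beta,j)}$ is simply the projection onto $A_j$, and the hypothesis on $W$ says precisely that $W \in A_0$, equivalently $W(x) = \widehat{W}(x^{\beta})$ for some $\widehat{W} \in \mathbb{Q}[x]$.

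First I would use $W \mid V$ (and $W \not\equiv 0$) to write $V(x) = W(x)\,Q(x)$ with $Q \in \mathbb{Q}[x]$, and then split $Q$ into its homogeneous parts, $Q = \sum_{j=0}^{\beta-1} Q^{(\beta,j)}$ with $Q^{(\beta,j)} \in A_j$. Multiplying through by $W \in A_0$ gives $W\,Q^{(\beta,j)} \in A_j$, so $V = \sum_{j=0}^{\beta-1} W\,Q^{(\beta,j)}$ exhibits a decomposition of $V$ into components lying in the respective $A_j$. Since that decomposition is unique (each monomial $x^n$ belongs to exactly one class, namely $A_{n \bmod \beta}$), we must have $V^{(\beta,j)} = W\,Q^{(\beta,j)}$ for every $j$, and in particular $W \mid V^{(\beta,j)}$, which is the claim.

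There is essentially no genuine obstacle here; the one point that needs care is that multiplication respects the grading, i.e.\ that a term $c\,x^{\beta k}$ of $W$ times a term $d\,x^{\beta \ell + j}$ of $Q^{(\beta,j)}$ yields $cd\,x^{\beta(k+\ell)+j}$, whose exponent is again $\equiv j \pmod{\beta}$. The degenerate cases ($\beta = 1$, or $W$ a nonzero constant) are subsumed by this argument. Equivalently, one may argue by the substitution $y = x^{\beta}$: writing $W(x) = \widehat{W}(y)$ and grouping the terms of $V$ by the residue of their exponent, one obtains $V^{(\beta,j)}(x) = \widehat{W}(y)\,Q^{(\beta,j)}(x)$ directly, since grouping $V = \widehat{W}(y)\,Q$ by exponent residue is the same as grouping $Q$ by exponent residue.
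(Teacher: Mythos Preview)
Your argument is correct: the $\mathbb{Z}/\beta\mathbb{Z}$-grading of $\mathbb{Q}[x]$ by exponent residue, together with $W \in A_0$, immediately forces $V^{(\beta,j)} = W\,Q^{(\beta,j)}$ by uniqueness of the homogeneous decomposition. Note, however, that the paper does not actually prove this lemma; it merely quotes it from \cite[Lemma~18]{BaDPZ}, so there is no in-paper proof to compare your approach against.
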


We will also need the next theorem by Filaseta and Schinzel on the divisibility of lacunary polynomials by cyclotomic polynomials.

\begin{theorem}[\hspace{1sp}{\cite[Theorem 2]{fischinz2003}}]\label{filaschinz_th}
Let $P(x) \in \mathbb{Z}[x]$ have $N$ nonzero terms and let $\Phi_f(x) \mid P(x)$. Suppose that $p_1, p_2, \dots, p_k$ are distinct primes such that
\[
    \sum_{j=1}^k (p_j-2) > N-2 .
\]
Let $e_j = v_{p_j}(f)$. Then for at least one $j$, $1 \le j \le k$, we have that $\Phi_{f'}(x)\mid P(x)$, where $f' = f / p_j^{e_j}$.
\end{theorem}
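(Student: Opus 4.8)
The plan is to prove the theorem by induction on the number $k$ of primes, the crux being a single-prime \emph{descent engine} that strips the entire $p$-part from the conductor. Throughout, let $\zeta_\ell$ denote a primitive $\ell$-th root of unity; since $P \in \mathbb{Z}[x]$, the condition $\Phi_\ell(x) \mid P(x)$ is equivalent to $P(\zeta_\ell) = 0$, so it suffices to track the vanishing of $P$ at suitable roots of unity. I first record the trivial reduction: if some $e_j = 0$ then $p_j \nmid f$, so $f/p_j^{e_j} = f$ and $\Phi_f(x) \mid P(x)$ already holds by hypothesis; that index $j$ works and we are done. Hence I may assume $e_j \ge 1$ for every $j$, and I set $q_j = p_j^{e_j}$.

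Here is the engine. Fix a prime $p$ with $e = v_p(f) \ge 1$ and write $g = f/p^e$, so $\gcd(p^e, g) = 1$ and $\zeta_f = \zeta_{p^e}\,\zeta_g$. Writing $P(x) = \sum_{i=1}^{N} a_i x^{n_i}$ and grouping $P(\zeta_f)=0$ by the residue of $n_i$ modulo $p^e$ gives
\[
  0 = P(\zeta_f) = \sum_{r=0}^{p^e-1} c_r\,\zeta_{p^e}^{r}, \qquad c_r = \sum_{\substack{1 \le i \le N \\ n_i \equiv r \,(p^e)}} a_i\,\zeta_g^{n_i} \in K := \mathbb{Q}(\zeta_g).
\]
Thus $\zeta_{p^e}$ is a root of $Q(y) := \sum_r c_r y^r \in K[y]$, a polynomial of degree $<p^e$ with at most $N$ nonzero coefficients and with $Q(1) = \sum_r c_r = P(\zeta_g)$. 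Since $\gcd(p^e,g)=1$, the fields $\mathbb{Q}(\zeta_{p^e})$ and $K$ are linearly disjoint over $\mathbb{Q}$, so the minimal polynomial of $\zeta_{p^e}$ over $K$ is still $\Phi_{p^e}$; hence $\Phi_{p^e}(y) \mid Q(y)$ in $K[y]$. The aim is to force $Q(1)=0$, i.e. $\Phi_g \mid P$, whenever $p > N$. Using $\Phi_{p^e}(y)=\Phi_p(y^{p^{e-1}})$, all exponents of $\Phi_{p^e}$ are divisible by $p^{e-1}$, so Lemma~\ref{cool_div_lemma} applies: the parts $Q^{(p^{e-1},j)}$ of $Q$ whose exponents are $\equiv j\ (p^{e-1})$ are each divisible by $\Phi_{p^e}$, and writing $Q^{(p^{e-1},j)}(y)=y^{j}\,G_j(y^{p^{e-1}})$ and cancelling the unit power $y^{j}$ yields $\Phi_p(z)\mid G_j(z)$ in $K[z]$, with $\sum_j G_j(1)=Q(1)$. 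Finally $\Phi_p(z)\mid G_j(z)$ means $\zeta_p$ is a root of $G_j$; grouping its coefficients modulo $p$ and using (again by linear disjointness, since $\gcd(p,g)=1$) that the only $K$-linear relation among $1,\zeta_p,\dots,\zeta_p^{p-1}$ is a multiple of $1+\zeta_p+\cdots+\zeta_p^{p-1}=0$, all $p$ residue-class sums of the coefficients of $G_j$ coincide with a common value. If $G_j$ has fewer than $p$ nonzero terms, some class is empty, so that common value and hence $G_j(1)$ vanish. As $p>N$ bounds the total number of terms, every $G_j$ has $<p$ terms, giving $Q(1)=\sum_j G_j(1)=0$. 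This proves the engine: \emph{if $p > N$ then $\Phi_{f/p^{v_p(f)}}(x) \mid P(x)$}, which is exactly the case $k=1$ of the theorem.

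For general $k$ the induction proceeds by peeling one prime. Applying the grouping to $p_k$, either $P(\zeta_{f/q_k})=0$, in which case the index $k$ works, or else this value is nonzero, and the engine's analysis shows the ``failure'' forces every residue class modulo $p_k$ (inside one of the pieces $G_j$) to carry a nonzero block. I would then use this structure to replace $P$ by an auxiliary integer polynomial that is still divisible by $\Phi_{f/q_k}$ — and therefore retains the cyclotomic factors at the remaining primes $p_1,\dots,p_{k-1}$, whose $p_i$-parts are unchanged by removing $q_k$ — but has strictly fewer nonzero terms, so that the induction hypothesis applies to $p_1,\dots,p_{k-1}$ and a divisor found there lifts back to a factor $\Phi_{f/q_i}$ of the original $P$.

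The hard part will be the term-count bookkeeping in this peeling step. The naive estimate only says a failing prime ``consumes'' $p_k$ terms, which is too weak; what is actually needed is that failure lets me discard at least $p_k-2$ nonzero terms, so the residual count $N'$ satisfies $N'-2 < \sum_{i=1}^{k-1}(p_i-2)$. Making this ``$-2$ per prime'' accounting exact — and verifying the lift-back of the cyclotomic divisor across the change of conductor — is precisely where the additive hypothesis $\sum_j (p_j-2) > N-2$ is genuinely used, and is the step I expect to require the most care.
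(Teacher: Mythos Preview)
This theorem is not proved in the paper at all; it is quoted verbatim from Filaseta--Schinzel and used only as a black box in Lemmas~\ref{b1_key_lemma} and~\ref{b2_key_lemma}. There is therefore no in-paper proof to compare against, and I can only assess your argument on its own terms.

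Your single-prime descent engine --- the case $k=1$, where the hypothesis reads $p_1>N$ --- is correct and well organised. One minor remark: you invoke Lemma~\ref{cool_div_lemma} over $K=\mathbb{Q}(\zeta_g)$, whereas the paper states it only over $\mathbb{Q}[x]$; the proof of that lemma is field-independent, so this is harmless, but it deserves a one-line justification.

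The inductive step for $k\ge 2$ is, as you yourself flag, incomplete, and in my view the gap is substantial rather than cosmetic. Two concrete obstacles:
\begin{enumerate}
\item \emph{No auxiliary polynomial is actually produced.} When the engine fails for $p_k$ you learn that some piece $G_{j_0}$ equals $\lambda\,\Phi_{p_k}$ with $\lambda\ne 0$ in $K=\mathbb{Q}(\zeta_{f/q_k})$. This says the $p_k$ coefficients $c_{j_0+s\,p_k^{e_k-1}}$ of $Q$ are all equal to $\lambda$; but each such $c_r$ is an algebraic-number sum $\sum_{n_i\equiv r} a_i\,\zeta_{f/q_k}^{\,n_i}$, not an integer, and the contributing residue classes may each contain several terms of $P$. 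I do not see how to extract from this an \emph{integer} polynomial $\tilde P$ with $\Phi_{f/q_k}\mid\tilde P$ and at most $N-(p_k-2)$ nonzero terms. The ``discard $p_k-2$ terms'' accounting is asserted, not derived.
\item \emph{The lift-back changes the conductor.} Even granting such a $\tilde P$, applying the inductive hypothesis to $\tilde P$ with conductor $f/q_k$ and primes $p_1,\dots,p_{k-1}$ yields $\Phi_{(f/q_k)/q_i}\mid\tilde P$ for some $i<k$. The desired conclusion is $\Phi_{f/q_i}\mid P$ --- a different conductor (with the full $p_k$-power restored) and a different polynomial. You note that a lift-back is needed but supply no mechanism for it, and none is apparent from your setup.
\end{enumerate}
So the proposal settles $k=1$ cleanly but leaves the theorem genuinely open for $k\ge 2$; the outlined induction does not close without a substantially new idea at exactly the point you identify as ``the hard part''.
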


\section{The class \texorpdfstring{$\cB_1$}{B1}}

In this section we consider the class $\cB_1$. Recall that these graphs are of the form $\bicirc(m; S, T, \linebreak R)$ with even $m \ge 4$, $S = \{a, -a, \frac{m}{2} \}$, $T = \{b, -b, \frac{m}{2}\}$ and $R = \{ 0 \}$, where $1 \le a \le b < \frac{m}{2}$. For such parameters, we denoted the graph $\bicirc(m;S,T,R)$ by $B_1(m;a,b)$. The following theorem provides a classification of nut graphs among the members of $\cB_1$.

\begin{theorem}\label{b1_main_th}
Let $m \ge 4$ be even and let $a$ and $b$ be integers such that $1 \le a \le b < \frac{m}{2}$. Then the graph $B_1(m; a, b)$ is a nut graph if and only if the following conditions hold:
\begin{enumerate}[(i)]
\item $m \equiv_4 2$; 
\item $a$ and $b$ are both even; 
\item $\gcd(\frac{m}{2}, a, b) = 1$; 
\item if $5 \mid m$, then at least one of the numbers $a, b, a - b, a + b$ is divisible by five.
\end{enumerate}
\end{theorem}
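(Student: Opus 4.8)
\section*{Proof proposal}

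The plan is to use Lemma~\ref{simple_nut} and Corollary~\ref{zeroeigenvalue} to turn the problem into a statement about which roots of unity satisfy a polynomial identity, and then to settle that statement with the cyclotomic tools of Subsection~2.3. By Lemma~\ref{simple_nut}, $B_1(m;a,b)$ is a nut graph if and only if it is connected and $0$ is a simple eigenvalue, and connectedness is exactly $\gcd(\tfrac m2,a,b)=1$, i.e.\ condition~(iii). Since $R=\{0\}$ we have $|\lambda_k^R|^2=1$ for every $k$, and since $\lambda_k^S=\omega_m^{ka}+\omega_m^{-ka}+(-1)^k$ is real we get $\lambda_{m-k}^S=\lambda_k^S$, and similarly for $T$; hence \eqref{general_eigen} is invariant under $k\mapsto m-k$, so a \emph{unique} solution must satisfy $k=m-k$, that is $k\in\{0,\tfrac m2\}$. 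As $\lambda_0^S=\lambda_0^T=3\neq 1/3$, the index $k=0$ never works. Therefore $B_1(m;a,b)$ is a nut graph if and only if $\gcd(\tfrac m2,a,b)=1$, \eqref{general_eigen} holds at $k=\tfrac m2$, and it fails at every other $k$.

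Evaluating at $k=\tfrac m2$ (where $\omega_m^{km/2}$ becomes $(-1)^{m/2}$) shows that \eqref{general_eigen} holds there precisely when either $m\equiv_4 2$ with $a,b$ both even, or $m\equiv_4 0$ with $a,b$ both odd. In the second case I would produce a second solution by hand: at $k=\tfrac m4$ one has $\omega_m^{k}=i$, and since $a,b$ are odd, $\omega_m^{ka}+\omega_m^{-ka}=0$, so $\lambda_k^S=\lambda_k^T=i^{m/2}=\pm1$ and $|\lambda_k^R|^2=1$; thus \eqref{general_eigen} also holds at $\tfrac m4\neq\tfrac m2$, $0$ is not simple, and $B_1(m;a,b)$ is not a nut graph. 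This yields the necessity of (i) and (ii). It remains to prove that, assuming (i)--(iii), \eqref{general_eigen} fails for all $k\notin\{0,\tfrac m2\}$ if and only if (iv) holds.

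For the remaining indices I split by the parity of $k$. Writing $\xi=\omega_m^k$, one checks $\lambda_k^S=\xi^{-a}\,\Phi_3(\xi^a)$ when $k$ is even (so $\xi$ is an $\tfrac m2$-th root of unity) and $\lambda_k^S=\xi^{-a}\,\Phi_6(\xi^a)$ when $k$ is odd. Consequently \eqref{general_eigen} at an even $k\neq0$ is equivalent to $G(\xi)=0$, where $G(x)=\Phi_3(x^a)\Phi_3(x^b)-x^{a+b}$ and $\xi$ runs over the nontrivial $\tfrac m2$-th roots of unity, and at an odd $k\neq\tfrac m2$ it is equivalent to $H(\xi)=0$, where $H(x)=\Phi_6(x^a)\Phi_6(x^b)-x^{a+b}$ and $\xi$ runs over the $m$-th roots of unity of order $2f'$ with $f'>1$ odd (here I use $m\equiv_4 2$). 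By irreducibility of cyclotomic polynomials, the requirement becomes: $\Phi_f\nmid G$ for all $f\mid\tfrac m2$ with $f>1$, and $\Phi_f\nmid H$ for all even $f\mid m$ with $f>2$.

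Both $G$ and $H$ are lacunary with at most $8$ nonzero terms, so Theorem~\ref{filaschinz_th} lets me strip from any putative index $f$ all prime powers belonging to primes $\ge 11$ (and, in combination, the prime $7$ whenever $5\mid f$ as well), while $\Phi_{p^2 d}(x)=\Phi_{pd}(x^p)$ together with Lemma~\ref{cool_div_lemma} lets me reduce a non-squarefree $f$ at the cost of replacing $G$ and $H$ by their decimated subpolynomials, which have strictly fewer terms. This leaves a finite list of indices $f$ with prime factors in $\{2,3,5,7\}$. The vast majority are eliminated by the remark that $\Phi_3(\xi^a)=0$ forces $G(\xi)=-\xi^{a+b}\neq0$, and $\Phi_6(\xi^a)=0$ forces $H(\xi)=-\xi^{a+b}\neq0$ (and likewise with $b$); the residual degenerate configurations (where $\Phi_3(\xi^a)$ or $\Phi_6(\xi^a)$ equals $1$ or $3$, so $\xi^{a}$ or $\xi^{b}$ has small order) force a prime to divide $\gcd(\tfrac m2,a,b)$, contradicting~(iii); the cases involving $7$ (such as $\Phi_7$, $\Phi_{14}$, $\Phi_{21}$) go the same way, and one verifies directly that $\Phi_5\nmid G$, using that $2\cos\tfrac{2\pi}{5}+1=\tfrac{1+\sqrt5}{2}$, $2\cos\tfrac{4\pi}{5}+1=\tfrac{1-\sqrt5}{2}$ and that no product of two of $3,\tfrac{1\pm\sqrt5}{2}$ equals $1$. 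The single surviving possibility is $\Phi_{10}\mid H$: a short computation with $2\cos\tfrac{2\pi}{5}-1=\tfrac{-3+\sqrt5}{2}$ and $2\cos\tfrac{4\pi}{5}-1=\tfrac{-3-\sqrt5}{2}$ (whose product is $1$) shows, using that $a,b$ are even, that $\Phi_{10}\mid H$ exactly when one of $a,b$ is $\equiv\pm1\pmod 5$ and the other is $\equiv\pm2\pmod 5$; this is precisely the statement that $5\mid m$ while none of $a,b,a-b,a+b$ is divisible by $5$, i.e.\ the negation of~(iv). Assembling these facts gives the theorem. The main obstacle is the finite case analysis of the last paragraph: organizing the interplay between Theorem~\ref{filaschinz_th} and the non-squarefree reduction so that it provably terminates at prime factors $\le 7$, and confirming that no prime other than $5$ leaves behind a cyclotomic obstruction compatible with condition~(iii).
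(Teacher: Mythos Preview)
Your overall plan coincides with the paper's: both reduce via Lemma~\ref{simple_nut} and Corollary~\ref{zeroeigenvalue} to showing that two eight-term polynomials are not divisible by $\Phi_f$ for the relevant $f\mid m$, then invoke Theorem~\ref{filaschinz_th} together with a square-free reduction to cut down to $f$ with prime factors in $\{2,3,5,7\}$, and finally dispatch a finite list. Your $G$ and $H$ expand to exactly the paper's $R_{a,b}$ and $Q_{a,b}$; the factored presentations $G=\Phi_3(x^a)\Phi_3(x^b)-x^{a+b}$ and $H=\Phi_6(x^a)\Phi_6(x^b)-x^{a+b}$ are a pleasant repackaging that the paper does not exploit, and they make your real-variable treatment of the small-$f$ cases (products of shifted cosines equal to~$1$) cleaner than the paper's route, which for those cases simply cites computational results from~\cite{BaDPZ}.

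There is, however, one concrete gap in your Filaseta--Schinzel step for $H$. Since $a$ and $b$ are even, $H(-1)=\Phi_6(1)^2-1=0$, i.e.\ $\Phi_2\mid H$. Hence when $f=2p$ with $p\ge 11$ prime and $2p\mid m$ (already square-free, so your non-squarefree reduction does not apply), stripping $p$ via Theorem~\ref{filaschinz_th} only delivers the true statement $\Phi_2\mid H$ and gives no contradiction; you cannot conclude $\Phi_{2p}\nmid H$ from Filaseta--Schinzel alone. (For $G$ there is no analogous problem, since $G(1)=8\neq0$, so the reduction to $\Phi_1\mid G$ genuinely contradicts.) The paper closes exactly this gap with a separate lemma: reducing $Q_{a,b}$ modulo $x^{2p}-1$ gives a polynomial of degree at most $p-1$ with at most eight terms, which can be divisible by $\Phi_{2p}$ (itself having $p\ge 11$ nonzero terms) only if it is identically zero; a term-matching then forces $p\mid a$ and $p\mid b$, contradicting~(iii). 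Some argument of this kind, independent of Theorem~\ref{filaschinz_th}, is required at this point, and your proposal does not supply one.
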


The goal of the present section is to give the proof of Theorem \ref{b1_main_th}. We start by observing the next result.

\begin{lemma}\label{b1_poly_lemma}
    The graph $B_1(m; a, b)$ is a nut graph if and only if $m \equiv_4 2$, $a$ and $b$ are both even, while the polynomial
    \begin{equation}\label{b1_aux_0}
        R_{a, b}(x) = x^{2a + 2b} + x^{2a} + x^{2b} + 1 + x^{2a + b} + x^{a + 2b} + x^a + x^b    
    \end{equation}
    is not divisible by $\Phi_f(x)$ for any odd $f \ge 3$ such that $f \mid m$, and the polynomial
    \begin{equation}\label{b1_aux_1}
        Q_{a, b}(x) = x^{2a + 2b} + x^{2a} + x^{2b} + 1 - x^{2a + b} - x^{a + 2b} - x^a - x^b    
    \end{equation}
    is not divisible by $\Phi_f(x)$ for any even $f \ge 4$ such that $f \mid m$.
\end{lemma}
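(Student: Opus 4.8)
The plan is to translate the nut-graph condition for $B_1(m;a,b)$ into the stated polynomial-divisibility statements by applying Corollary \ref{zeroeigenvalue}. For a graph in $\cB_1$ we have $R=\{0\}$, so $\lambda_k^R=1$ and $|\lambda_k^R|^2=1$ for every $k$; moreover $S=\{a,-a,\tfrac m2\}$ gives $\lambda_k^S=\omega_m^{ka}+\omega_m^{-ka}+\omega_m^{km/2}=2\cos(2\pi ka/m)+(-1)^k$, and similarly $\lambda_k^T=2\cos(2\pi kb/m)+(-1)^k$. By Corollary \ref{zeroeigenvalue}, $B_1(m;a,b)$ is a nut graph if and only if the equation $\lambda_k^S\lambda_k^T=1$ holds for exactly one $k\in\{0,\dots,m-1\}$ (and then Lemma \ref{simple_nut} guarantees the eigenvector has no zero entries). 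First I would check that $k=0$ is always a solution: $\lambda_0^S=\lambda_0^T=3$, so $\lambda_0^S\lambda_0^T=9\ne 1$ — so in fact $k=0$ is \emph{not} a solution, and one must instead identify which $k$ must be the unique solution. Re-examining: $\lambda_k^S\lambda_k^T\ge 0$ is not guaranteed, but the product equals $1$ forces constraints. The correct pivot is $k=\tfrac m2$: there $\omega_m^{km/2}=\omega_m^{m^2/4}$, and one computes $\lambda_{m/2}^S=2\cos(\pi a)+(-1)^{m/2}=2(-1)^a+(-1)^{m/2}$; requiring this product with its $T$-analogue to equal $1$ pins down the parity conditions $m\equiv_4 2$ and $a,b$ both even, since only then do we get $\lambda_{m/2}^S=2\cdot 1+(-1)=1=\lambda_{m/2}^T$, yielding a genuine zero eigenvalue at $k=\tfrac m2$.

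Next I would handle the uniqueness. Under the assumption $m\equiv_4 2$ and $a,b$ even, $k=\tfrac m2$ is a solution, and the graph is a nut graph iff no other $k$ satisfies $\lambda_k^S\lambda_k^T=|\lambda_k^R|^2=1$. I would split the remaining indices $k\in\{1,\dots,m-1\}\setminus\{\tfrac m2\}$ according to the parity of $k$. For even $k$, write $k=2\ell$; then $(-1)^k=1$ and the condition $\lambda_k^S\lambda_k^T=1$ becomes $(\omega_m^{ka}+\omega_m^{-ka}+1)(\omega_m^{kb}+\omega_m^{-kb}+1)=1$. Clearing denominators by multiplying through by $\omega_m^{k(a+b)}$ and expanding the product of the two trinomials $(\omega_m^{ka}+1+\omega_m^{-ka})(\omega_m^{kb}+1+\omega_m^{-kb})$, one obtains (after multiplying by $\omega_m^{k(a+b)}$ and subtracting $\omega_m^{k(a+b)}$) exactly the statement that $\omega_m^k$ is a root of a polynomial whose monomials match $R_{a,b}(x)$ — here the substitution $k$ even, $k=2\ell$, should convert $\omega_m^{ka}=\omega_m^{2\ell a}=(\omega_m^\ell)^{2a}$, which is why the exponents in $R_{a,b}$ are $2a,2b,a,b$ etc. rather than $a/2$: the variable in $R_{a,b}$ is meant to be a root of unity of order dividing $\tfrac m2$. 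Concretely, even $k$ ranges over $2\ell$ with $\ell\in\{1,\dots,\tfrac m2-1\}$, and $\omega_m^{2\ell}=\omega_{m/2}^\ell$; since $m/2$ is odd, $\omega_{m/2}^\ell$ ranges over all $\tfrac m2$-th roots of unity except $1$. So "$\lambda_k^S\lambda_k^T=1$ for some even $k\ne 0,\tfrac m2$" is equivalent to "$R_{a,b}(x)$ has a root that is a nontrivial $\tfrac m2$-th root of unity", i.e. $\Phi_f(x)\mid R_{a,b}(x)$ for some $f\ge 3$ with $f\mid\tfrac m2$, equivalently some odd $f\ge 3$ with $f\mid m$ (using that $m\equiv_4 2$, so odd divisors of $m$ are exactly divisors of $\tfrac m2$). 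For odd $k$, $(-1)^k=-1$, so the condition becomes $(\omega_m^{ka}+\omega_m^{-ka}-1)(\omega_m^{kb}+\omega_m^{-kb}-1)=1$, which after the same clearing of denominators yields that $\omega_m^k$ is a root of $Q_{a,b}(x)$; here odd $k$ ranges over the primitive-ish residues and $\omega_m^k$ runs through all $m$-th roots of unity of even order dividing $m$ (since $k$ odd forces the order of $\omega_m^k$ to be even — it is $m/\gcd(m,k)$ with $\gcd(m,k)$ odd, and $m$ is even). So "$\lambda_k^S\lambda_k^T=1$ for some odd $k$" is equivalent to $Q_{a,b}(x)$ having a root of even order dividing $m$, i.e. $\Phi_f(x)\mid Q_{a,b}(x)$ for some even $f\ge 4$ with $f\mid m$ (the case $f=2$, root $-1$, I would rule out by direct substitution: $Q_{a,b}(-1)=1+1+1+1-1-1-1-1=0$ when... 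I need to check — $(-1)^{2a+2b}=1$, $(-1)^{2a}=1$, etc., and $(-1)^{2a+b}=(-1)^b$, $(-1)^a$, $(-1)^b$; with $a,b$ even all four subtracted terms are $1$, giving $4-4=0$, so $f=2$ would always divide — hence one must be careful to argue that $k$ with $\omega_m^k=-1$, namely $k=\tfrac m2$, is precisely the allowed solution and is therefore excluded from "any other $k$", which is why the statement restricts to $f\ge 4$).

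I expect the main obstacle to be the bookkeeping of exponents and the parity-of-$k$ case split: one must verify carefully that the algebraic manipulation $(\text{trinomial})(\text{trinomial})=1 \iff \omega_m^k \text{ is a root of } R_{a,b}$ (resp.\ $Q_{a,b}$) is an honest equivalence — in particular that no extraneous roots are introduced when multiplying by $\omega_m^{k(a+b)}$ (harmless since it is a unit) — and that the index sets line up exactly: even $k\notin\{0,\tfrac m2\}$ $\leftrightarrow$ nontrivial $\tfrac m2$-th roots of unity, and odd $k$ $\leftrightarrow$ $m$-th roots of even order. The parity hypotheses $m\equiv_4 2$ and $a,b$ even, already extracted from the $k=\tfrac m2$ analysis, are exactly what makes these correspondences clean (e.g. they ensure $R_{a,b}$ and $Q_{a,b}$ have the displayed integer exponents and that $R_{a,b}(1)=8\ne 0$, so $f=1$ need not be excluded separately). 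Once the two equivalences are established, the lemma follows immediately: $B_1(m;a,b)$ is a nut graph $\iff$ $m\equiv_4 2$, $a,b$ even, and $k=\tfrac m2$ is the \emph{only} index with $\lambda_k^S\lambda_k^T=1$ $\iff$ the stated non-divisibility conditions on $R_{a,b}$ and $Q_{a,b}$ hold.
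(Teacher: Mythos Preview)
Your overall strategy coincides with the paper's: reduce via Corollary~\ref{zeroeigenvalue} and Lemma~\ref{simple_nut} to the equation $\lambda_k^S\lambda_k^T=1$, pin the unique solution at $k=\tfrac m2$, extract the parity constraints, and then translate ``no other $k$ works'' into non-divisibility of $R_{a,b}$ and $Q_{a,b}$ by cyclotomic polynomials, split according to whether $\omega_m^k$ has odd or even order (equivalently, whether $k$ is even or odd). The polynomial bookkeeping in your last two paragraphs is essentially correct and matches the paper's equations~\eqref{b1_aux_3}--\eqref{b1_aux_4}, though your aside about ``$(\omega_m^\ell)^{2a}$'' misidentifies the source of the exponents: the doubling in $R_{a,b}$ and $Q_{a,b}$ comes from expanding $(\zeta^a+\zeta^{-a}\pm 1)(\zeta^b+\zeta^{-b}\pm 1)-1$ and clearing by $\zeta^{a+b}$, not from the substitution $k=2\ell$. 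The variable is $\zeta=\omega_m^k$ throughout.

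There is, however, a genuine gap in your derivation of the parity conditions. You assert that $\lambda_{m/2}^S\lambda_{m/2}^T=1$ ``pins down $m\equiv_4 2$ and $a,b$ both even, since only then do we get $\lambda_{m/2}^S=1=\lambda_{m/2}^T$.'' But the requirement is that the \emph{product} be $1$, not each factor. With $\lambda_{m/2}^S=2(-1)^a+(-1)^{m/2}$, the case $4\mid m$ and $a,b$ both odd gives $\lambda_{m/2}^S=\lambda_{m/2}^T=-1$, so the product is again $1$; your analysis at $k=\tfrac m2$ cannot distinguish this from the desired case. The paper rules it out by exhibiting a second root: when $4\mid m$ and $a,b$ are odd, taking $\zeta=i$ (i.e.\ $k=\tfrac m4$) gives $i^a+i^{-a}=i^b+i^{-b}=0$, so the eigenvalue equation is also satisfied there, violating uniqueness. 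You also need to state explicitly (as the paper does, via conjugation $\zeta\mapsto\bar\zeta$) why the unique solution is forced to lie at $k\in\{0,\tfrac m2\}$ in the first place; you assert ``the correct pivot is $k=\tfrac m2$'' without this step. Once these two points are supplied, your plan goes through.
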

\begin{proof}
    By Lemma~\ref{simple_nut} and Corollary~\ref{zeroeigenvalue}, we have that $B_1(m; a, b)$ is a nut graph if and only if
    \begin{equation}\label{b1_aux_2}
        (\zeta^a + \zeta^{-a} + \zeta^\frac{m}{2})(\zeta^b + \zeta^{-b} + \zeta^\frac{m}{2}) = |1|^2
    \end{equation}
    holds for exactly one $m$-th root of unity $\zeta$. Note that \eqref{b1_aux_2} is not satisfied for $\zeta = 1$, and if \eqref{b1_aux_2} holds for a nonreal $\zeta$, then it also holds for $\overline{\zeta} \neq \zeta$. With this in mind, we conclude that $B_1(m; a, b)$ is a nut graph if and only if $P_{a, b}(\zeta) = 0$ holds only for $\zeta = -1$ among all the $m$-th roots of unity $\zeta$, where
    \[
        P_{a, b}(\zeta) = (\zeta^a + \zeta^{-a})(\zeta^b + \zeta^{-b}) + \zeta^\frac{m}{2}(\zeta^a + \zeta^{-a} + \zeta^b + \zeta^{-b}) .
    \]

    Suppose that $B_1(m; a, b)$ is a nut graph. Observe that if $a$ and $b$ are of different parity, then
    \[
        (-1)^a + (-1)^{-a} + (-1)^b + (-1)^{-b} = 0 ,
    \]
    hence $P_{a, b}(-1) = 4(-1)^{a + b} \neq 0$. Thus, we have $a \equiv_2 b$, which implies
    \[
        ((-1)^a + (-1)^{-a})((-1)^b + (-1)^{-b}) = 4.
    \]
    Now, suppose that $4 \mid m$. If $a$ and $b$ are both even, we get
    \[
        (-1)^\frac{m}{2} ((-1)^a + (-1)^{-a} + (-1)^b + (-1)^{-b}) = 1 \cdot 4 = 4,
    \]
    which means that $P_{a, b}(-1) = 8 \neq 0$, thus yielding a contradiction. On the other hand, if $a$ and $b$ are both odd, we have
    \[
        P_{a, b}(i) = (i^a + i^{-a})(i^b + i^{-b}) + i^\frac{m}{2} (i^a + i^{-a} + i^b + i^{-b}) = 0 \cdot 0 + i^\frac{m}{2} \cdot (0 + 0) = 0 .
    \]
    This implies that $P_{a, b}(\zeta) = 0$ holds for an $m$-th root of unity $\zeta \neq -1$, which again leads to a contradiction. With all of this in mind, we conclude that if $B_1(m; a, b)$ is a nut graph, then $m \equiv_4 2$. In this case, we have $(-1)^\frac{m}{2} = -1$, hence we also get
    \[
        (-1)^a + (-1)^{-a} + (-1)^b + (-1)^{-b} = 4,
    \]
    which implies that $a$ and $b$ are both even.

    For an $m$-th root of unity $\zeta$ whose order $f$ is odd, we have $f \mid \frac{m}{2}$, which means that $P_{a, b}(\zeta) = 0$ if and only if
    \begin{equation}\label{b1_aux_3}
        (\zeta^a + \zeta^{-a})(\zeta^b + \zeta^{-b}) + (\zeta^a + \zeta^{-a} + \zeta^b + \zeta^{-b}) = 0.
    \end{equation}
    Similarly, for an $m$-th root of unity $\zeta$ whose order $f$ is even, we have $\frac{m}{2} \equiv_f \frac{f}{2}$, hence $P_{a, b}(\zeta) = 0$ if and only if
    \begin{equation}\label{b1_aux_4}
        (\zeta^a + \zeta^{-a})(\zeta^b + \zeta^{-b}) - (\zeta^a + \zeta^{-a} + \zeta^b + \zeta^{-b}) = 0.
    \end{equation}
    The result now follows from the irreducibility of cyclotomic polynomials after multiplying \eqref{b1_aux_3} and \eqref{b1_aux_4} by $\zeta^{a + b}$.
\end{proof}

Throughout the rest of the section, let $Q_{a, b}(x)$ and $R_{a, b}(x)$ be the polynomials~\eqref{b1_aux_1} and \eqref{b1_aux_0} from Lemma~\ref{b1_poly_lemma}, respectively. We continue by showing that conditions (i)--(iv) from Theorem~\ref{b1_main_th} are all necessary.

\begin{lemma}\label{b1_cond_lemma}
    If at least one of the conditions (i)--(iv) from Theorem \ref{b1_main_th} does not hold, then the graph $B_1(m; a, b)$ is not a nut graph.
\end{lemma}
\begin{proof}
    As shown in Lemma \ref{b1_poly_lemma}, if one of the conditions (i) and (ii) does not hold, then $B_1(n; a, b)$ is not a nut graph. Suppose that conditions (i) and (ii) both hold. If condition (iii) is not satisfied, then $\frac{m}{2}$, $a$ and $b$ all share a common odd prime factor $p$. By letting $\zeta$ be any primitive $2p$-th root of unity, we get
    \[
        Q_{a, b}(\zeta) = \zeta^{2a + 2b} + \zeta^{2a} + \zeta^{2b} + 1 - \zeta^{2a + b} - \zeta^{a + 2b} - \zeta^a - \zeta^b = 0.
    \]
    Since $2p \mid m$, Lemma \ref{b1_poly_lemma} implies that $B_1(m; a, b)$ is not a nut graph.

    Now, suppose that condition (iv) does not hold, so that $5 \mid m$ and $5 \nmid a, b, a - b, a + b$. In this case, we trivially observe that the numbers $2a + 2b, 2a, 2b, 0, a + b$ all have mutually distinct remainders modulo five. Since all of them are even, we obtain
    \begin{equation}\label{b1_aux_5}
        x^{2a + 2b} + x^{2a} + x^{2b} + 1 + x^{a + b} \equiv_{\Phi_{10}(x)} x^8 + x^6 + x^4 + x^2 + 1 .
    \end{equation}
    Similarly, the numbers $2a + b, a + 2b, a, b, a + b$ all have mutually distinct remainders modulo five, hence we have
    \begin{equation}\label{b1_aux_6}
        x^{2a + b} + x^{a + 2b} + x^{a} + x^b + x^{a + b} \equiv_{\Phi_{10}(x)} x^8 + x^6 + x^4 + x^2 + 1 .
    \end{equation}
    From \eqref{b1_aux_5} and \eqref{b1_aux_6}, we get $\Phi_{10}(x) \mid Q_{a, b}(x)$. Since $10 \mid m$, Lemma \ref{b1_poly_lemma} implies that $B_1(m; a, b)$ is not a nut graph.
\end{proof}

In the rest of the section, we finalize the proof of Theorem \ref{b1_main_th} by showing that conditions~(i)--(iv) are also sufficient. We note that the divisibility of $Q_{a, b}(x)$ and $R_{a, b}(x)$ polynomials by cyclotomic polynomials has already been investigated in \cite[Section 6]{BaDPZ}. By relying on these existing results, we obtain the next two lemmas.

\begin{lemma}\label{b1_square_lemma}
    Suppose that conditions (i)--(iv) from Theorem \ref{b1_main_th} hold and let $f \in \mathbb{N}$ be such that $f \mid m$. If $\Phi_f(x) \mid Q_{a, b}(x)$ or $\Phi_f(x) \mid R_{a, b}(x)$, then $f$ is square-free.
\end{lemma}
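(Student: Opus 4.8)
The plan is to suppose, for contradiction, that some prime $p$ has $p^2 \mid f$ and $\Phi_f(x)$ divides $Q_{a,b}(x)$ or $R_{a,b}(x)$, and then to use the structural fact $\Phi_f(x) = \Phi_{f/p}(x^p)$ together with Lemma~\ref{cool_div_lemma} to force a contradiction with conditions (i)--(iv). Since $f \mid m$ and $m \equiv_4 2$, the prime $p$ is odd; moreover $f$ odd means $f \mid \frac{m}{2}$, and then the relevant polynomial is $R_{a,b}(x)$ (in the even-$f$ case the relevant polynomial is $Q_{a,b}(x)$, but if $f$ is even with $p^2 \mid f$ and $p$ odd we can still write $\Phi_f(x) = \Phi_{f/p}(x^p)$; the genuinely new obstruction is $p$ odd, since $4 \nmid m$ rules out $p = 2$). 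So assume $p$ is an odd prime with $p^2 \mid f \mid m$.

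First I would apply Lemma~\ref{cool_div_lemma} with $\beta = p$: writing $R_{a,b}(x)$ (or $Q_{a,b}(x)$) as a sum over residue classes mod $p$ of its eight monomials $x^{0}, x^{a}, x^{b}, x^{a+b}$ and $x^{2a}, x^{2b}, x^{2a+b}, x^{a+2b}, x^{2a+2b}$ — note $2a+2b = (a+b) + (a+b)$ and the exponents are $0, a, b, a+b, 2a, 2b, 2a+b, a+2b, 2a+2b$, which are the eight pairwise sums $\{0,a,b,a+b\} + \{0,a,b,a+b\}$ minus... more precisely the exponent set is $\{s+t : s,t \in \{0,a,b\}\}$, i.e. $\{0, a, b, 2a, a+b, 2b, 2a+b, a+2b, 2a+2b\}$ with $a+b$ absent — actually $R_{a,b}$ has exactly the eight exponents listed. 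Since $\Phi_{f/p}(x^p)$ divides the whole polynomial, Lemma~\ref{cool_div_lemma} forces $\Phi_{f/p}(x^p)$, hence $\Phi_f(x)$, to divide each residue-class piece $R_{a,b}^{(p,j)}(x)$ separately. Because a primitive $f$-th root of unity $\zeta$ has $\Phi_f(\zeta) = 0$ and $\zeta$ generates a cyclic group of order $f$ containing $p^2$, evaluating each piece at $\zeta$ gives a vanishing sum of powers of $\zeta$ restricted to one residue class mod $p$; grouping the eight exponents of $R_{a,b}$ by their residue mod $p$ and using that conditions (ii)--(iii) control $\gcd(\frac{m}{2}, a, b)$, I expect to deduce that $a \equiv b \equiv 0 \pmod p$ is forced (or some analogous rigid congruence), which then either violates $\gcd(\frac{m}{2}, a, b) = 1$ (condition (iii)) or, when $p = 5$, collides with condition (iv).

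The cleaner route, which I would actually follow, is to cite the already-developed divisibility analysis of $Q_{a,b}$ and $R_{a,b}$ from \cite[Section 6]{BaDPZ}: that section presumably determines precisely which cyclotomic polynomials can divide these two polynomials, in particular pinning down the possible non-square-free $f$. Combined with conditions (i)--(iv) — which are exactly the constraints that eliminate the "bad" square divisors — this should yield the statement almost immediately. So the proof is: invoke the classification of cyclotomic divisors of $R_{a,b}$ and $Q_{a,b}$ from \cite{BaDPZ}; observe that the only way a prime-squared $p^2 \mid f$ can occur is the configuration excluded by $\gcd(\frac m2, a, b) = 1$ together with the parity of $a,b$ (condition (ii)) and the mod-$5$ condition (iv); conclude $f$ is square-free.

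The main obstacle will be matching the hypotheses of the \cite{BaDPZ} results to the present normalization — in particular, the polynomials $R_{a,b}$, $Q_{a,b}$ here have exponents $\{0,a,b,2a,2b,a+b$-type sums$\}$ of a specific "bilinear" shape, and I must make sure the BaDPZ analysis is stated for exactly this family (or a reparametrization of it, perhaps after the substitution $x \mapsto x^2$ which relates $R_{a,b}$ to a sextic-type polynomial). If the \cite{BaDPZ} statement is not quite in the needed form, the fallback is the self-contained argument of the previous paragraph: apply Lemma~\ref{cool_div_lemma} with $\beta = p$, analyze the residue classes of the eight exponents mod $p$, and push through the case $p = 5$ by hand using condition (iv) — the nuisance there being the bookkeeping of which of $a, b, a-b, a+b$ is divisible by $5$ and how that interacts with the partition of $\{0,a,b,2a,2b,2a+b,a+2b,2a+2b\}$ into residue classes.
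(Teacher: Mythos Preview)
Your ``cleaner route'' is exactly what the paper does: it cites specific lemmas from \cite[Section~6]{BaDPZ} (Lemmas 19, 20, 21 there), splitting into the cases $p \ge 7$, $p = 5$, and $p = 3$, and in each case reads off that either $p \mid a, b$ (contradicting condition~(iii)) or, for $p = 5$, that $5 \mid a, b$ or $5 \nmid a, b, a+b, a-b$ (contradicting (iii) or (iv)). Your setup --- $p$ odd because $m \equiv_4 2$, $\Phi_f(x) = \Phi_{f/p}(x^p)$, then Lemma~\ref{cool_div_lemma} --- is identical to the paper's, so once you replace ``presumably determines'' with the actual citations and the three-case conclusion, you have the paper's proof; the digression about odd versus even $f$ and the exponent-set bookkeeping in your first two paragraphs is unnecessary and can be dropped.
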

\begin{proof}
    By way of contradiction, suppose that $\Phi_f(x) \mid Q_{a, b}(x)$ or $\Phi_f(x) \mid R_{a, b}(x)$ holds for some $f \in \mathbb{N}$ such that $f \mid m$ and $p^2 \mid f$, where $p$ is a prime. Since $m \equiv_4 2$, we get that $p \ge 3$. Also, note that $\Phi_f(x) = \Phi_{f / p}(x^p)$.
    
    If $p = 3$, then it follows that $3 \mid a, b$, as shown in \cite[Lemma 21]{BaDPZ}, which contradicts condition~(iii). On the other hand, if $p = 5$, then we get $5 \mid a, b$ or $5 \nmid a, b, a + b, a - b$, as shown in \cite[Lemma 20]{BaDPZ}. Therefore, at least one of the conditions (iii) and (iv) does not hold, which is impossible. Finally, if $p \ge 7$, then we may conclude that $p \mid a, b$, as shown in \cite[Lemma~19]{BaDPZ}, which contradicts condition~(iii).
\end{proof}

\begin{lemma}\label{b1_prime_lemma}
    Suppose that conditions (i)--(iv) from Theorem \ref{b1_main_th} hold and let $p \ge 11$ be a prime such that $p \mid m$. Then $\Phi_p(x) \nmid R_{a, b}(x)$ and $\Phi_{2p}(x) \nmid Q_{a, b}(x)$.
\end{lemma}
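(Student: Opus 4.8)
The plan is to prove the two non-divisibility statements by rather different arguments, according to the parity of the relevant index.

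First, for $\Phi_p(x)\nmid R_{a,b}(x)$ I would apply the Filaseta--Schinzel theorem (Theorem~\ref{filaschinz_th}) directly. The polynomial $P(x)=R_{a,b}(x)\in\mathbb Z[x]$ has at most eight nonzero terms, so we may take $N\le 8$. If $\Phi_p(x)\mid R_{a,b}(x)$, apply Theorem~\ref{filaschinz_th} with the single prime $p_1=p$: as $p\ge 11$ we have $p_1-2=p-2\ge 9>6\ge N-2$, and with $e_1=v_p(p)=1$ the theorem gives $\Phi_{f'}(x)\mid R_{a,b}(x)$ for $f'=p/p=1$, i.e.\ $(x-1)\mid R_{a,b}(x)$, so $R_{a,b}(1)=0$. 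But $R_{a,b}(1)$ equals the sum of all coefficients of $R_{a,b}$, which is $8\ne 0$ --- a contradiction.

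Second, for $\Phi_{2p}(x)\nmid Q_{a,b}(x)$ the same shortcut is useless, since by conditions (i) and (ii) the numbers $a$ and $b$ are both even, hence every exponent occurring in $Q_{a,b}$ is even, $Q_{a,b}(1)=Q_{a,b}(-1)=0$, and peeling off the prime factors of $2p$ via Theorem~\ref{filaschinz_th} yields only statements that already hold. I would instead use the evenness of the exponents directly: $a,b$ even gives $Q_{a,b}(-x)=Q_{a,b}(x)$, so assuming $\Phi_{2p}(x)\mid Q_{a,b}(x)$ and using $\Phi_{2p}(-x)=\Phi_p(x)$ (true because $p$ is odd) we get $\Phi_p(x)\mid Q_{a,b}(-x)=Q_{a,b}(x)$. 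Together with $(x-1)\mid Q_{a,b}(x)$ (since $Q_{a,b}(1)=0$) and the coprimality of $\Phi_1$ and $\Phi_p$, this yields $x^p-1\mid Q_{a,b}(x)$.

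Finally I would derive a contradiction from $x^p-1\mid Q_{a,b}(x)$. Reducing $Q_{a,b}(x)$ modulo $x^p-1$ amounts to replacing each exponent by its residue modulo $p$; since the four terms of $Q_{a,b}$ with coefficient $+1$ carry the exponent multiset $\{0,\,2a,\,2b,\,2a+2b\}$ and the four with coefficient $-1$ carry $\{a,\,b,\,2a+b,\,a+2b\}$, the divisibility forces the congruence of multisets $\{0,2a,2b,2a+2b\}\equiv\{a,b,2a+b,a+2b\}\pmod p$. Comparing the sums of squares of the two multisets gives $a^2+b^2\equiv 0\pmod p$, and feeding this into the comparison of the sums of fourth powers gives $20a^4\equiv 0\pmod p$; as $p\ge 11$, this forces $p\mid a$ and hence $p\mid b$. (Alternatively, a short case analysis according to which of $a,b,2a+b,a+2b$ is $\equiv 0\pmod p$ leads to the same conclusion.) But $m\equiv_4 2$ and $p\mid m$ imply $p\mid\frac{m}{2}$, so $p$ divides $\gcd(\frac{m}{2},a,b)=1$ by condition (iii), which is absurd. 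The only genuinely delicate point is this last extraction of an arithmetic obstruction; everything preceding it is a routine use of the tools already in place.
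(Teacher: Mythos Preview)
Your proof is correct. Both halves are sound: the Filaseta--Schinzel application for $R_{a,b}$ is clean (the inequality $p-2\ge 9>6\ge N-2$ holds, and $R_{a,b}(1)=8\ne 0$ gives the contradiction), and for $Q_{a,b}$ your parity trick $Q_{a,b}(-x)=Q_{a,b}(x)$ together with $\Phi_{2p}(-x)=\Phi_p(x)$ correctly reduces to $x^p-1\mid Q_{a,b}(x)$, after which the power-sum computation (difference of second moments gives $2(a^2+b^2)\equiv 0$, difference of fourth moments reduces to $-20a^4\equiv 0$) legitimately forces $p\mid a,b$, contradicting condition~(iii).

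Your route differs from the paper's in both halves. For $R_{a,b}$ the paper cites an external lemma (from the companion paper on cubic tricirculants) showing directly that $\Phi_p\mid R_{a,b}$ forces $p\mid a,b$; your Filaseta--Schinzel shortcut bypasses that entirely and is more economical. For $Q_{a,b}$ the paper reduces modulo $\Phi_{2p}$ with sign-twisted exponents, argues by degree that the reduction is either zero or a scalar multiple of $\Phi_{2p}$ (impossible since $p\ge 11>8$), and then again invokes the external case analysis to get $p\mid a,b$. Your reduction to $x^p-1$ via evenness is slicker, and the moment comparison replaces the cited case analysis with a short self-contained computation. The trade-off is that your argument for $Q_{a,b}$ relies essentially on $a,b$ being even (condition~(ii)), whereas the paper's reduction-mod-$\Phi_{2p}$ approach would work in broader generality; but within the stated hypotheses your proof is complete and arguably more transparent.
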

\begin{proof}
    First of all, suppose that $\Phi_p(x) \mid R_{a, b}(x)$. As shown in \cite[Lemma 23]{BaDPZ}, this implies that $p \mid a, b$, which contradicts condition~(iii) from Theorem~\ref{b1_main_th}.

    Now, suppose that $\Phi_{2p}(x) \mid Q_{a, b}(x)$. We trivially observe that the auxiliary polynomial
    \begin{align*}
        Q_{a, b}^{\bmod 2p}(x) &= (-1)^{\lfloor \frac{2a + 2b}{p} \rfloor} x^{(2a + 2b) \bmod p} + (-1)^{\lfloor \frac{2a}{p} \rfloor} x^{2a \bmod p} + (-1)^{\lfloor \frac{2b}{p} \rfloor} x^{2b \bmod p} + 1\\
        &-(-1)^{\lfloor \frac{2a + b}{p} \rfloor} x^{(2a + b) \bmod p} - (-1)^{\lfloor \frac{a + 2b}{p} \rfloor} x^{(a + 2b) \bmod p}\\
        &- (-1)^{\lfloor \frac{a}{p} \rfloor} x^{a \bmod p} - (-1)^{\lfloor \frac{b}{p} \rfloor} x^{b \bmod p}
    \end{align*}
    is also divisible by $\Phi_{2p}(x)$. Observe that $\Phi_{2p}(x) = \sum_{j = 0}^{p - 1} (-1)^j x^j$. Since $\deg Q_{a, b}^{\bmod 2p} \le p - 1 = \deg \Phi_{2p}$, we have that either $Q_{a, b}^{\bmod 2p}(x) \equiv 0$ or $Q_{a, b}^{\bmod 2p}(x) = \beta \, \Phi_{2p}(x)$ for some $\beta \in \mathbb{Q} \setminus \{ 0 \}$. In the latter case, $Q_{a, b}^{\bmod 2p}(x)$ must have $p$ nonzero terms, which is not possible since $p \ge 11$. Thus, $Q_{a, b}^{\bmod 2p}(x) \equiv 0$. The same approach from \cite[Lemma 23]{BaDPZ} can now be used to verify that this is only possible when $p \mid a, b$, which contradicts condition~(iii).
\end{proof}

We now apply Lemmas~\ref{b1_poly_lemma}, \ref{b1_square_lemma} and \ref{b1_prime_lemma} alongside Theorem \ref{filaschinz_th} to obtain the following result.

\begin{lemma}\label{b1_key_lemma}
    Suppose that conditions (i)--(iv) from Theorem \ref{b1_main_th} hold. Then the graph $B_1(m; a, \linebreak b)$ is a nut graph if and only if $\Phi_f(x) \nmid R_{a, b}(x)$ for every $f \in \{3, 5, 7, 15, 21\}$ such that $f \mid m$, and $\Phi_f(x) \nmid Q_{a, b}(x)$ for every $f \in \{ 6, 10, 14, 30, 42 \}$ such that $f \mid m$.
\end{lemma}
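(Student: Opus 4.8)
The plan is to deduce the statement from Lemma~\ref{b1_poly_lemma} by upgrading its ``for every $f$'' divisibility conditions to the two finite lists, the extra leverage coming from the Filaseta--Schinzel bound of Theorem~\ref{filaschinz_th} together with Lemmas~\ref{b1_square_lemma} and~\ref{b1_prime_lemma}. Since conditions~(i) and~(ii) are among the assumptions, Lemma~\ref{b1_poly_lemma} tells us that $B_1(m; a, b)$ is a nut graph exactly when $\Phi_f(x) \nmid R_{a, b}(x)$ for all odd $f \ge 3$ dividing $m$ and $\Phi_f(x) \nmid Q_{a, b}(x)$ for all even $f \ge 4$ dividing $m$. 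One implication of the claimed equivalence is then immediate, as $\{3, 5, 7, 15, 21\}$ consists of odd integers $\ge 3$ and $\{6, 10, 14, 30, 42\}$ of even integers $\ge 4$; all of the work lies in the converse.

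For the converse I would argue by minimal counterexample, treating $R_{a, b}(x)$ first. Note that $R_{a, b}(x)$ and $Q_{a, b}(x)$ each have $N \le 8$ nonzero terms, so $N - 2 \le 6$, and that $R_{a, b}(1) = 8 \ne 0$, hence $\Phi_1(x) \nmid R_{a, b}(x)$. Suppose $f \mid m$ is the smallest odd integer $\ge 3$ with $\Phi_f(x) \mid R_{a, b}(x)$ and $f \notin \{3, 5, 7, 15, 21\}$; by Lemma~\ref{b1_square_lemma}, $f$ is square-free, hence a product of distinct odd primes. The engine of the argument is that whenever a set of distinct primes dividing $f$ satisfies $\sum_j (p_j - 2) \ge 7 > N - 2$, Theorem~\ref{filaschinz_th} forces $\Phi_{f / p_j}(x) \mid R_{a, b}(x)$ for at least one index $j$ (with $v_{p_j}(f) = 1$ by squarefreeness), producing a strictly smaller odd square-free divisor $f' \mid m$ with $\Phi_{f'}(x) \mid R_{a, b}(x)$. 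I would then split into two cases. If $f$ has a prime factor $p \ge 11$, then $p - 2 \ge 9 \ge 7$, so either $f = p$ --- excluded by Lemma~\ref{b1_prime_lemma}, or directly because stripping $p$ would force $\Phi_1(x) \mid R_{a, b}(x)$ --- or else $f / p < f$ is a divisor of $m$ which, by minimality, must lie in $\{3, 5, 7, 15, 21\}$ (contradicting the hypothesis) or equal $1$ (impossible). If instead every prime factor of $f$ is at most $7$, then squarefreeness together with $f \notin \{3, 5, 7, 15, 21\}$ leaves only $f \in \{35, 105\}$, for which the full prime set has $\sum_j (p_j - 2) \in \{8, 9\} \ge 7$; stripping then yields either a divisor in $\{3, 5, 7, 15, 21\}$ (contradicting the hypothesis) or the divisor $35$ when $f = 105$ (contradicting minimality). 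These cases are exhaustive.

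The treatment of $Q_{a, b}(x)$ is formally parallel: a minimal even $f \ge 4$ dividing $m$ with $\Phi_f(x) \mid Q_{a, b}(x)$ and $f \notin \{6, 10, 14, 30, 42\}$ is square-free by Lemma~\ref{b1_square_lemma}, so $f = 2 f_0$ with $f_0$ odd square-free and $f_0 \ge 3$; since $2 - 2 = 0$, only the odd prime factors of $f$ feed the Filaseta--Schinzel sum, and stripping an odd prime $p$ replaces $f$ by $2 (f_0 / p)$, still at least $6$ whenever $f_0 / p > 1$. Hence the case analysis (an odd prime $\ge 11$ present with $f_0$ composite, or $f_0 \in \{35, 105\}$) carries over verbatim. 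The one point I expect to be genuinely delicate is $f = 2p$ with $p \ge 11$ prime: there stripping $p$ only gives $\Phi_2(x) \mid Q_{a, b}(x)$, which is in fact \emph{true} --- since $a$ and $b$ being even forces $Q_{a, b}(-1) = 0$ --- so Theorem~\ref{filaschinz_th} yields no information and the recursion stalls. This is precisely the obstruction that Lemma~\ref{b1_prime_lemma} is built to remove, as it rules out $\Phi_{2p}(x) \mid Q_{a, b}(x)$ outright. With that case disposed of, the recursion terminates, and combining both halves with Lemma~\ref{b1_poly_lemma} completes the proof.
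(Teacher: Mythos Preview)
Your proposal is correct and follows essentially the same approach as the paper: both reduce Lemma~\ref{b1_poly_lemma}'s divisibility conditions to the two finite lists by combining Lemma~\ref{b1_square_lemma} (squarefreeness), Theorem~\ref{filaschinz_th} (stripping large primes), and Lemma~\ref{b1_prime_lemma} (the base cases $f=p$ and $f=2p$ with $p\ge 11$). The only cosmetic differences are organizational---you frame the reduction as a minimal-counterexample argument and split cases on whether $f$ has a prime factor $\ge 11$, whereas the paper iteratively strips primes and splits on whether $\gcd(f,3\cdot 5\cdot 7)=1$---but the mathematical content, including the explicit handling of $f_0\in\{35,105\}$ via $(5-2)+(7-2)>6$, is identical.
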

\begin{proof}
    Let $F_1 = \{3, 5, 7, 15, 21\}$ and $F_2 = \{6, 10, 14, 30, 42\}$. If $\Phi_f(x) \mid R_{a, b}(x)$ for some $f \in F_1$ such that $f \mid m$, or $\Phi_f(x) \mid Q_{a, b}(x)$ for some $f \in F_2$ such that $f \mid m$, then Lemma \ref{b1_poly_lemma} implies that $B_1(m; a, b)$ is not a nut graph. Thus, it suffices to prove the converse.

    Suppose that $B_1(m; a, b)$ is not a nut graph. By Lemma \ref{b1_poly_lemma}, there exists an $f \ge 3$ such that $f \mid m$ and one of the next two conditions is satisfied:
    \begin{enumerate}[(a)]
        \item $f$ is odd and $\Phi_f(x) \mid R_{a, b}(x)$;
        \item $f$ is even and $\Phi_f(x) \mid Q_{a, b}(x)$.
    \end{enumerate}
    From Lemma \ref{b1_square_lemma}, we get that $f$ is square-free. We now divide the problem into two cases depending on whether $f$ has a prime factor among $3$, $5$ and $7$.

    \bigskip\noindent
    \textbf{Case 1:} $\gcd(f, 3 \cdot 5 \cdot 7) = 1$.\quad
    In this case, Theorem \ref{filaschinz_th} can be repeatedly used to cancel out prime factors of $f$ greater than $7$ until only one such factor is left. Therefore, depending on whether the starting $f$ is odd or even, we obtain either $\Phi_p(x) \mid R_{a, b}(x)$ for some prime $p \ge 11$ such that $p \mid m$, or $\Phi_{2p}(x) \mid Q_{a, b}(x)$ for some prime $p \ge 11$ such that $p \mid m$. Either way, Lemma~\ref{b1_prime_lemma} yields a contradiction.

    \bigskip\noindent
    \textbf{Case 2:} $\gcd(f, 3 \cdot 5 \cdot 7) > 1$.\quad
    Here, Theorem \ref{filaschinz_th} can be repeatedly applied to cancel out any potential prime factors of $f$ greater than $7$. Therefore, there exists a square-free $f' \ge 3$ without a prime factor above $7$ such that $f' \mid f$ and one of the next two conditions is satisfied:
    \begin{enumerate}[(a)]
        \item $f'$ is odd and $\Phi_{f'}(x) \mid R_{a, b}(x)$;
        \item $f'$ is even and $\Phi_{f'}(x) \mid Q_{a, b}(x)$.
    \end{enumerate}
    Also, since $(5 - 2) + (7 - 2) > 8 - 2$, we can cancel out either $5$ or $7$ as a prime factor if they appear as simultaneous factors. Hence, we may assume without loss of generality that $35 \nmid f'$. The result now follows by observing that either $f' \in F_1$ and $\Phi_{f'}(x) \mid R_{a, b}(x)$, or $f' \in F_2$ and $\Phi_{f'}(x) \mid Q_{a, b}(x)$.
\end{proof}

We are finally in a position to complete the proof of Theorem \ref{b1_main_th} via Lemmas \ref{b1_cond_lemma} and \ref{b1_key_lemma}.

\begin{proof}[Proof of Theorem \ref{b1_main_th}]
    If at least one of the conditions (i)--(iv) from Theorem \ref{b1_main_th} does not hold, then $B_1(m; a, b)$ is indeed not a nut graph, due to Lemma \ref{b1_cond_lemma}. Now, suppose that all of these conditions hold. For each $f \in \mathbb{N}$, let
    \begin{align*}
        Q_{a, b}^{\bmod f}(x) = x^{(2a + 2b) \bmod f} &+ x^{2a \bmod f} + x^{2b \bmod f} + 1\\
        &- x^{(2a + b) \bmod f} - x^{(a + 2b) \bmod f} - x^{a \bmod f} - x^{b \bmod f}
    \end{align*}
    and
    \begin{align*}
        R_{a, b}^{\bmod f}(x) = x^{(2a + 2b) \bmod f} &+ x^{2a \bmod f} + x^{2b \bmod f} + 1\\
        &+ x^{(2a + b) \bmod f} + x^{(a + 2b) \bmod f} + x^{a \bmod f} + x^{b \bmod f} .
    \end{align*}
    We trivially observe that $\Phi_f(x) \mid Q_{a, b}(x)$ if and only if $\Phi_f(x) \mid Q_{a, b}^{\bmod f}(x)$, and $\Phi_f(x) \mid R_{a, b}(x)$ if and only if $\Phi_f(x) \mid R_{a, b}^{\bmod f}(x)$. As indicated by the computational results given in \cite[Sections~A and B]{BaDPZ}, provided conditions (i)--(iv) are satisfied, $\Phi_f(x) \mid R_{a, b}^{\bmod f}(x)$ cannot be true for any $f \in \{ 3, 5, 7, 15, 21 \}$, while
    $\Phi_f(x) \mid Q_{a, b}^{\bmod f}(x)$ cannot be true for any $f \in \{ 6, 10, 14, 30, 42 \}$ such that $f \mid m$. By Lemma \ref{b1_key_lemma}, we conclude that the graph $B_1(m; a, b)$ is a nut graph.
\end{proof}

We end the section with the following remark.

\begin{remark}
Let $m \ge 6$ be such that $m \equiv_4 2$ and let $a, b$ be even integers such that $1 \le a \le b < \frac{m}{2}$. Then the graph $B_1(m; a, b)$ is isomorphic to the product $I(\frac{m}{2}, \frac{a}{2}, \frac{b}{2}) \sqr K_2$. Therefore, the nut graphs from the class $\cB_1$ correspond to the $I$-graphs that have either $1$ or $-1$ as a simple eigenvalue, since the eigenvalues of $K_2$ are $-1$ and $1$. The generalized Petersen graphs that have $1$ as a simple eigenvalue were classified by Guo and Mohar \cite{GuoMoharsimpleGP}.

\end{remark}

\section{The class \texorpdfstring{$\cB_2$}{B2} --- The generalized rose window graphs}
\label{section:B2_new}

In this section we consider the class $\cB_2$. Recall that these graphs are of the form $\bicirc(m; S, T, \linebreak R)$ with $m \ge 3$, $S = \{a,-a\}$, $T = \{b,-b\}$ and $R = \{0, c\}$, where $1 \le a \le b < \frac{m}{2}$ and $1 \le c \le \frac{m}{2}$. For such parameters, we denoted the graph $\bicirc(m; S, T, R)$ by $B_2(m;a,b,c)$. The following theorem provides a classification of nut graphs among the members of $\cB_2$.

\begin{theorem}\label{b2_main_th}
    Let $m \ge 3$ and let $a$, $b$ and $c$ be integers such that $1 \le a \le b < \frac{m}{2}$ and $1 \le c \le \frac{m}{2}$. Then the graph $B_2(m;a,b,c)$ is a nut graph if and only if the following conditions hold:
    \begin{enumerate}[(i)]
        \item $m$ is coprime to all of $\gcd(a - b, a + b + c)$, $\gcd(a - b, a + b - c)$, $\gcd(a + b, a - b + c)$ and $\gcd(a + b, a - b - c)$;
        \item if $v_2(m) > v_2(c)$, then neither $v_2(a)$ nor $v_2(b)$ are equal to $v_2(c) - 1$;
        \item if $12 \mid m$, then $(a + b, a - b, c) \not\equiv_{12} (\pm 2, \pm 2, \pm 3)$;
        \item if $30 \mid m$, then $(\{a + b, a - b\}, c) \not\equiv_{30} (\{\pm 3, \pm 5\}, \pm 6), (\{\pm 3, \pm 9\}, \pm 10), (\{\pm 5, \pm 9\}, \pm 12)$.
    \end{enumerate}
    In conditions (iii) and (iv), the $\pm$ signs can be chosen independently. In particular, there are $8$ and $48$ forbidden triples of remainders in conditions (iii) and (iv), respectively.
\end{theorem}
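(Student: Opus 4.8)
The plan is to follow the same strategy as for $\cB_1$: translate nut-ness into a non-divisibility condition for one explicit lacunary polynomial, use Theorem~\ref{filaschinz_th} to cut the problem down to finitely many cyclotomic factors, and finish with a direct check. For the translation step I would apply Lemma~\ref{simple_nut} and Corollary~\ref{zeroeigenvalue}: $B_2(m;a,b,c)$ is a nut graph iff the identity \eqref{general_eigen}, which here reads $|1+\zeta^c|^2 = (\zeta^a+\zeta^{-a})(\zeta^b+\zeta^{-b})$, holds for exactly one $m$-th root of unity $\zeta$. It always holds for $\zeta = 1$ (matching the observation in Lemma~\ref{simple_nut} that every graph in $\cB_2$ has $0$ as an eigenvalue), so this amounts to: no $m$-th root of unity $\zeta \neq 1$ satisfies it. Writing $|1+\zeta^c|^2 = 2+\zeta^c+\zeta^{-c}$ and multiplying through by $\zeta^{a+b}$, such a $\zeta$ is a root of
\[
    P_{a,b,c}(x) = x^{2a+2b} + x^{2a} + x^{2b} + 1 - 2x^{a+b} - x^{a+b+c} - x^{a+b-c}
\]
(read as a Laurent polynomial if $c > a+b$, which changes nothing). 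Since $x-1 = \Phi_1(x)$ divides $P_{a,b,c}(x)$, this gives the $\cB_2$-analogue of Lemma~\ref{b1_poly_lemma}: $B_2(m;a,b,c)$ is a nut graph iff $\Phi_f(x) \nmid P_{a,b,c}(x)$ for every integer $f \ge 2$ with $f \mid m$. I would also record that $P_{a,b,c}$ has $7$ nonzero terms in general, with fewer only in degenerate cases that are transparently controlled by condition~(i) — for instance $P_{a,a,c}(x)$ factors as $(x^{2a-c}-1)(x^{2a+c}-1)$.

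Next I would show that each of (i)--(iv) is necessary by exhibiting, when it fails, an $m$-th root of unity $\zeta \neq 1$ with $P_{a,b,c}(\zeta) = 0$. If (i) fails, some prime $p \mid m$ (possibly $p = 2$) divides both arguments of one of the four listed gcd's, and a primitive $p$-th root of unity works, since $p \mid a-b$ (or $p \mid a+b$) reduces the identity to one of the form $\zeta^c = \zeta^{\pm(a+b)}$ (or $\zeta^c = \zeta^{\pm(a-b)}$), which the failure of (i) supplies. If (ii) fails, then $c$ is even, $2^{v_2(c)+1} \mid m$, and a primitive $2^{v_2(c)+1}$-th root $\zeta$ works: $\zeta^c = -1$ cancels the three mixed terms against $-2x^{a+b}$, and $v_2(a) = v_2(c)-1$ (or $v_2(b) = v_2(c)-1$) makes $\zeta^{2a} = -1$ (or $\zeta^{2b} = -1$), killing what is left. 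If (iii) or (iv) fails, a primitive $12$-th or $30$-th root of unity works, via the values of $2\cos(\pi/3)$, $2\cos(\pi/5)$, $2\cos(2\pi/5)$; the $\pm$ signs and the order within the pair do not matter because $P_{a,b,c}(\zeta)$ depends on $a,b,c$ only through $\zeta^{a+b}+\zeta^{-a-b}$, $\zeta^{a-b}+\zeta^{b-a}$ and $\zeta^c+\zeta^{-c}$.

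For sufficiency I would assume (i)--(iv), suppose for contradiction that $\Phi_f(x) \mid P_{a,b,c}(x)$ for some $f \ge 2$ dividing $m$, and write $f = 2^s g$ with $g$ odd. (a) If an odd prime $p$ satisfies $p^2 \mid g$, then $\Phi_f(x) = \Phi_{f/p}(x^p)$ together with Lemma~\ref{cool_div_lemma} force $\Phi_f(x)$ to divide every piece of $P_{a,b,c}$ cut out by a fixed residue class of exponents modulo $p$; a short analysis for $p \in \{3,5,7\}$ and for $p \ge 11$, parallel to \cite[Lemmas~19--21]{BaDPZ}, contradicts (i) (or (iv) when $p = 5$), so $g$ must be square-free. (b) Since $P_{a,b,c}$ has at most $7$ nonzero terms, $\sum_j(p_j-2) > 7-2$ holds for every single prime $p_j \ge 11$ as well as for the pairs $\{3,7\}$ and $\{5,7\}$; repeated use of Theorem~\ref{filaschinz_th} therefore strips from $g$ every odd prime factor above $7$ and then rules out the co-occurrence of $7$ with $3$ or with $5$, leaving $g \in \{1,3,5,7,15\}$. (c) For the $2$-part the square-free reduction is unavailable, so I would argue $2$-adically — using $\Phi_{2^s g}(x) = \Phi_{2^{s-1}g}(x^2)$ for $s \ge 2$, $\Phi_{2g}(x) = \Phi_g(-x)$, Lemma~\ref{cool_div_lemma}, and a case split on the parities and $2$-adic valuations of $a,b,c$ — to show that condition~(ii) forces $s \le 2$. (d) Finally, for each $f$ with $v_2(f) \le 2$ and odd part in $\{1,3,5,7,15\}$ I would check that $\Phi_f(x) \mid P_{a,b,c}(x)$ — equivalently $\Phi_f(x) \mid P_{a,b,c}^{\bmod f}(x)$, the reduction of exponents modulo $f$ — contradicts one of (i)--(iv); this is a finite computation in the spirit of \cite[Sections~A and B]{BaDPZ}, whose only genuinely new outcomes are the forbidden triples of condition~(iii) (from $f = 12$) and of condition~(iv) (from $f = 30$), every other $f$ in the list contributing nothing beyond (i)--(ii).

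The hard part will be step~(c). For $\cB_1$ the hypothesis $m \equiv_4 2$ forced $v_2(m) = 1$, so only $f$ with $v_2(f) \le 1$ could occur, whereas here $m$ is an arbitrary integer $\ge 3$ with $v_2(m)$ unbounded, and one must rule out $\Phi_{2^s}(x) \mid P_{a,b,c}(x)$ — and more generally $\Phi_{2^s g}(x) \mid P_{a,b,c}(x)$ — for every $s$, while the convenient substitution $\Phi_f(x) = \Phi_{f/p}(x^p)$ cannot be used at $p = 2$. This self-contained $2$-adic argument, which is precisely where the somewhat intricate condition~(ii) gets pinned down, is the crux. A persistent minor nuisance is the coefficient-$2$ term of $P_{a,b,c}$ together with possible exponent collisions, both of which must be tracked carefully through the modular reductions and through the term count feeding Theorem~\ref{filaschinz_th}.
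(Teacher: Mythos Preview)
Your plan matches the paper's proof almost step for step: Lemma~\ref{b2_poly_lemma} is your polynomial reformulation (the paper multiplies \eqref{b2_aux_2} by $\zeta^{a+b+c}$ rather than $\zeta^{a+b}$, which sidesteps the Laurent issue), Lemma~\ref{b2_cond_lemma} is your necessity argument for (i)--(ii), and Lemmas~\ref{b2_square_lemma}--\ref{b2_key_lemma} together with the script in Appendix~\ref{comp_b2} carry out your steps (a)--(d). Two places where your outline departs from what the paper actually does deserve attention.

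First, step (b) has a real gap. Applying Theorem~\ref{filaschinz_th} with a single prime $p \ge 11$ passes from $\Phi_f \mid P_{a,b,c}$ to $\Phi_{f/p} \mid P_{a,b,c}$, but if $f$ is odd with every prime factor at least $11$, iterating lands you at $f'=1$, and $\Phi_1 \mid P_{a,b,c}$ is vacuous (since $P_{a,b,c}(1)=0$); no contradiction follows. The paper closes this hole with Lemma~\ref{b2_prime_lemma}: for a prime $p \ge 11$ one shows $\Phi_p \nmid P_{a,b,c}$ directly, since $P_{a,b,c}^{\bmod p}$ has degree at most $p-1$ and at most seven terms, hence can be neither identically zero (by the same unique-residue case analysis you invoke in step (a)) nor a nonzero multiple of $\Phi_p$ (which would require $p$ terms). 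You need this extra lemma before the stripping in (b) can be completed. Second, in step (c) you aim for $v_2(f) \le 2$, whereas the paper only proves $v_2(f) \le 3$ (Lemma~\ref{b2_two_lemma}, via $\Phi_f(x)=\Phi_{f/8}(x^8)$ and a mod-$8$ residue split) and delegates the five values $f \in \{8,24,40,56,120\}$ to the computer check. Your sharper bound is in fact true --- the script confirms these $f$ contribute nothing beyond (i)--(ii) --- but the iterated mod-$2$ splitting you sketch carries less information than the paper's mod-$8$ argument, and it is not evident it yields $v_2(f)\le 2$; if it does not, simply fall back to $v_2(f)\le 3$ and enlarge the finite check. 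A smaller correction: in step (a), the contradiction in the sub-case $p \mid c$, $p \nmid a \pm b$ comes from condition (ii) (see Case~4 of Lemma~\ref{b2_square_lemma}), not from (iv).
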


\begin{remark}
    The members of $\cB_2$ for which $a = 1$ comprise the Rose Window graphs \cite{wilsonRW}. Hence, Theorem \ref{b2_main_th} also provides a classification of nut graphs among these graphs.
\end{remark}

The goal of the present section is to give the proof of Theorem \ref{b2_main_th}. We begin by observing the following result.

\begin{lemma}\label{b2_poly_lemma}
    The graph $B_2(m; a, b, c)$ is a nut graph if and only if the polynomial
    \begin{equation}\label{b2_aux_0}
        P_{a, b, c}(x) = x^{2a + 2b + c} + x^{2a + c} + x^{2b + c} + x^c - x^{a + b + 2c} - x^{a + b} - 2x^{a + b + c}
    \end{equation}
    is not divisible by $\Phi_f(x)$ for any $f \ge 2$ such that $f \mid m$.
\end{lemma}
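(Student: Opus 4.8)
The plan is to follow the same template as the proof of Lemma~\ref{b1_poly_lemma}: rephrase ``$B_2(m;a,b,c)$ is a nut graph'' as a statement about a simple zero eigenvalue, translate that into the vanishing of a single lacunary polynomial at roots of unity, and then appeal to the irreducibility of cyclotomic polynomials. First I would invoke Lemma~\ref{simple_nut} together with Corollary~\ref{zeroeigenvalue} to reduce to the following: $B_2(m;a,b,c)$ is a nut graph if and only if equality \eqref{general_eigen} holds for exactly one index $k \in \{0,\dots,m-1\}$. (For the class $\cB_2$ one may also note directly that $u(x_i)=1,\ u(y_i)=-1$ is always a kernel vector satisfying the local condition, and it has no zero entries, so once $0$ is a simple eigenvalue the graph is automatically a nut graph --- and automatically connected, since a disconnected graph cannot carry a nowhere-zero kernel vector while keeping $0$ simple.)

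Next I would plug the parameters of the class into \eqref{general_eigen}. Writing $\zeta = \omega_m^k$, we have $\lambda_k^S = \zeta^a + \zeta^{-a}$, $\lambda_k^T = \zeta^b + \zeta^{-b}$ and $\lambda_k^R = 1 + \zeta^c$, so \eqref{general_eigen} becomes
\[
    2 + \zeta^c + \zeta^{-c} = \zeta^{a+b} + \zeta^{a-b} + \zeta^{-a+b} + \zeta^{-a-b}.
\]
The decisive observation is that this identity holds automatically for $\zeta = 1$ (both sides equal $4$), which matches the fact that $0$ is always an eigenvalue of a $\cB_2$ graph. Consequently, $0$ is a simple eigenvalue precisely when the displayed identity fails for every $m$-th root of unity $\zeta \neq 1$. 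Multiplying the identity through by the unit $\zeta^{a+b+c}$ turns it into $P_{a,b,c}(\zeta) = 0$, where $P_{a,b,c}$ is exactly the polynomial \eqref{b2_aux_0}; hence $B_2(m;a,b,c)$ is a nut graph if and only if $P_{a,b,c}$ has no root among the $m$-th roots of unity other than $1$.

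To finish, I would use that every $m$-th root of unity $\zeta \neq 1$ is a primitive $f$-th root of unity for a unique $f \geq 2$ dividing $m$, and that $P_{a,b,c} \in \mathbb{Z}[x]$; by the irreducibility of $\Phi_f(x)$ over $\mathbb{Q}$, such a $\zeta$ is a root of $P_{a,b,c}$ if and only if $\Phi_f(x) \mid P_{a,b,c}(x)$. Therefore $P_{a,b,c}$ has such a root if and only if $\Phi_f(x) \mid P_{a,b,c}(x)$ for some $f \geq 2$ with $f \mid m$, which yields the claimed equivalence. One should remember that $\Phi_1(x) = x-1$ always divides $P_{a,b,c}$ (its coefficients sum to $0$), which is exactly why the range ``$f \geq 2$'' is the correct one rather than ``$f \geq 1$''. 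I do not expect any genuine difficulty in this lemma --- it is essentially bookkeeping; the real work of the section, namely determining for which divisors $f$ of $m$ the divisibility $\Phi_f(x)\mid P_{a,b,c}(x)$ can actually occur, comes afterwards.
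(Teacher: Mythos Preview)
Your proposal is correct and follows essentially the same approach as the paper: invoke Lemma~\ref{simple_nut} and Corollary~\ref{zeroeigenvalue}, substitute the $\cB_2$ parameters to get the identity $\zeta^{a+b}+\zeta^{a-b}+\zeta^{-a+b}+\zeta^{-a-b}-\zeta^{c}-\zeta^{-c}-2=0$, note it holds at $\zeta=1$, multiply by $\zeta^{a+b+c}$, and finish via the irreducibility of cyclotomic polynomials. Your side remarks (the explicit kernel vector $u(x_i)=1,\ u(y_i)=-1$ and the observation that $\Phi_1\mid P_{a,b,c}$) are correct but not needed for the argument itself.
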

\begin{proof}
    By Lemma~\ref{simple_nut} and Corollary~\ref{zeroeigenvalue}, we have that $B_2(m; a, b, c)$ is a nut graph if and only if
    \begin{equation}\label{b2_aux_1}
        (\zeta^a + \zeta^{-a})(\zeta^b + \zeta^{-b}) = |1 + \zeta^c|^2
    \end{equation}
    holds for exactly one $m$-th root of unity $\zeta$. Since $|1 + \zeta^c|^2 = (1 + \zeta^c)(1 + \zeta^{-c})$ and \eqref{b2_aux_1} holds for $\zeta = 1$, we conclude that $B_2(m; a, b, c)$ is a nut graph if and only if
    \begin{equation}\label{b2_aux_2}
        \zeta^{a + b} + \zeta^{a - b} + \zeta^{- a + b} + \zeta^{-a - b} - \zeta^c - \zeta^{-c} - 2 = 0
    \end{equation}
    is not true for any $m$-th root of unity $\zeta \neq 1$. The result now follows from the irreducibility of cyclotomic polynomials after multiplying~\eqref{b2_aux_2} by $\zeta^{a + b + c}$.
\end{proof}

Throughout the rest of the section, let $P_{a, b, c}(x)$ be the polynomial~\eqref{b2_aux_0} from Lemma~\ref{b2_poly_lemma}. We proceed by showing that conditions (i) and (ii) from Theorem~\ref{b2_main_th} are necessary.

\begin{lemma}\label{b2_cond_lemma}
    If at least one of the conditions (i) and (ii) from Theorem \ref{b2_main_th} does not hold, then the graph $B_2(m; a, b, c)$ is not a nut graph.
\end{lemma}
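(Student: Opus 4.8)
The plan is to argue the contrapositive directly from Lemma~\ref{b2_poly_lemma}: if condition (i) or (ii) fails, I exhibit a root of unity $\zeta$ of some order $f \mid m$, $f \ge 2$, with $P_{a,b,c}(\zeta) = 0$, equivalently (by the reformulation in the proof of Lemma~\ref{b2_poly_lemma}) an $m$-th root of unity $\zeta \ne 1$ satisfying
\[
    \zeta^{a+b} + \zeta^{a-b} + \zeta^{-a+b} + \zeta^{-a-b} - \zeta^c - \zeta^{-c} - 2 = 0 .
\]
Writing $\alpha = \zeta^{a+b}$, $\beta = \zeta^{a-b}$, $\gamma = \zeta^c$, this reads $\alpha + \alpha^{-1} + \beta + \beta^{-1} = \gamma + \gamma^{-1} + 2$, i.e.\ $2\cos\theta_{a+b} + 2\cos\theta_{a-b} = 2\cos\theta_c + 2$ where $\theta_k$ is the argument of $\zeta^k$. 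The two cheap ways to satisfy this are: (a) make $\zeta^{a+b} = \zeta^{a-b} = 1$ while $\zeta^c = 1$ as well, giving $2 + 2 = 1 + 1 + 2$ — wait, that gives $4 = 4$, good; or (b) make one of $\zeta^{a+b}, \zeta^{a-b}$ equal to $1$ and balance the rest.

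For condition (i): suppose, say, $d = \gcd(a+b, a-b+c) > 1$ shares a prime $p$ with $m$. Let $\zeta$ be a primitive $p$-th root of unity (so $f = p \mid m$, $f \ge 2$). Then $\zeta^{a+b} = 1$, so the left side of the displayed equation is $1 + 1 + \zeta^{a-b} + \zeta^{-(a-b)}$, and I need $\zeta^{a-b} + \zeta^{-(a-b)} = \zeta^c + \zeta^{-c}$, which holds if $\zeta^{a-b} = \zeta^{c}$ or $\zeta^{a-b} = \zeta^{-c}$, i.e.\ $p \mid (a-b) - c$ or $p \mid (a-b)+c$ — and indeed $p \mid a-b+c$ by assumption. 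So $P_{a,b,c}(\zeta) = 0$ and $B_2(m;a,b,c)$ is not a nut graph. The other three gcd expressions are handled symmetrically: $\gcd(a+b, a-b-c)$ uses $\zeta^{a+b}=1$ and $\zeta^{a-b} = \zeta^{c}$; $\gcd(a-b, a+b\pm c)$ uses $\zeta^{a-b} = 1$ and $\zeta^{a+b} = \zeta^{\mp c}$. In each case pick $\zeta$ primitive of order a common prime $p$ of $m$ and the relevant gcd. I should also briefly note that $a+b$ and $a-b$ cannot both vanish mod $p$ unless $p \mid 2a, 2b$, but that is fine — we only need one of the four gcd's to be non-coprime to $m$, and the corresponding construction works regardless.

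For condition (ii): assume $v_2(m) > v_2(c)$ and (WLOG) $v_2(a) = v_2(c) - 1 =: t - 1$ where $t = v_2(c) \ge 1$. Take $\zeta$ a primitive $2^{t+1}$-th root of unity; since $v_2(m) > t$, we have $2^{t+1} \mid m$, so this is a valid choice with $f = 2^{t+1} \ge 2$. Then $\zeta^c$ has order $2^{t+1}/\gcd(2^{t+1}, c) = 2^{t+1}/2^t = 2$, so $\zeta^c = -1$ and $\zeta^c + \zeta^{-c} = -2$; the right side of the displayed equation becomes $-2 + 2 = 0$. Meanwhile $\zeta^{2a}$ has order $2^{t+1}/\gcd(2^{t+1}, 2a) = 2^{t+1}/2^t = 2$, so $\zeta^{2a} = -1$, i.e.\ $\zeta^a$ is a primitive $4$th root of unity, hence $\zeta^a + \zeta^{-a} = 0$. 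Now $\zeta^{a+b} + \zeta^{a-b} + \zeta^{-a+b} + \zeta^{-a-b} = (\zeta^a + \zeta^{-a})(\zeta^b + \zeta^{-b}) = 0 \cdot (\zeta^b + \zeta^{-b}) = 0$, matching the right side. So $P_{a,b,c}(\zeta) = 0$ and again $B_2(m;a,b,c)$ is not a nut graph. The case $v_2(b) = v_2(c) - 1$ is identical with the roles of $a$ and $b$ swapped.

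The main obstacle is nothing deep but rather bookkeeping: making sure that the $\zeta$ I select in each sub-case genuinely has order dividing $m$ (this is where the hypotheses "$p \mid m$'' and "$v_2(m) > v_2(c)$'' are used, and they must be invoked explicitly), and that $\zeta \ne 1$ (automatic since the chosen order is $\ge 2$). One subtlety worth a sentence: in the condition-(ii) argument I use the factorization $\zeta^{a+b}+\zeta^{a-b}+\zeta^{-a+b}+\zeta^{-a-b} = (\zeta^a+\zeta^{-a})(\zeta^b+\zeta^{-b})$, which is the same identity already exploited in the proof of Lemma~\ref{b2_poly_lemma}, so it needs no justification. No appeal to the Filaseta--Schinzel machinery (Theorem~\ref{filaschinz_th}) or to Lemmas~\ref{b1_square_lemma}--\ref{b1_prime_lemma} is needed here — those are reserved for proving sufficiency of the conditions and for handling conditions (iii) and (iv), which are treated in later lemmas.
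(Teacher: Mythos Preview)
Your proposal is correct and follows essentially the same approach as the paper: for condition~(i) you pick a primitive $p$-th root of unity where $p$ is the shared prime, and for condition~(ii) you pick a primitive $2^{t+1}$-th root of unity where $t=v_2(c)$, exactly as the paper does (with $\ell=t-1$). The only cosmetic difference is that you verify the vanishing in the symmetric form $\zeta^{a+b}+\zeta^{a-b}+\zeta^{-a+b}+\zeta^{-a-b}=\zeta^c+\zeta^{-c}+2$ from~\eqref{b2_aux_2}, whereas the paper groups terms of $P_{a,b,c}(\zeta)$ directly; these are equivalent via the multiplication by $\zeta^{a+b+c}$ already noted in Lemma~\ref{b2_poly_lemma}.
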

\begin{proof}
    First, suppose that condition (i) does not hold. If $m$ is not coprime to $\gcd(a - b, a + b + c)$, then $m$, $a - b$ and $a + b + c$ all share a common prime factor $p$. By letting $\zeta$ be any primitive $p$-th root of unity, we get
    \[
        P_{a, b, c}(\zeta) = (\zeta^{2a + 2b + c} - \zeta^{a + b}) + (\zeta^{2a + c} + \zeta^{2b + c} - 2\zeta^{a + b + c}) + (\zeta^c - \zeta^{a + b + 2c}) = 0. 
    \]
    Since $p \mid m$, Lemma~\ref{b2_poly_lemma} implies that $B_2(m; a, b, c)$ is not a nut graph. The same conclusion can be analogously obtained if $m$ is not coprime to $\gcd(a - b, a + b - c)$, $\gcd(a + b, a - b + c)$ or $\gcd(a + b, a - b - c)$.

    Now, suppose that condition (ii) does not hold. Without loss of generality, let $v_2(a) = \ell$, $v_2(c) = \ell + 1$, $v_2(m) \ge \ell + 2$ and let $\zeta$ be a primitive $2^{\ell + 2}$-th root of unity. Thus, we have $\zeta^{2a} = -1$ and $\zeta^c = -1$, so that
    \[
        P_{a, b, c}(\zeta) = \zeta^c (\zeta^{2a} + 1)(\zeta^{2b} + 1) - \zeta^{a + b} (\zeta^c + 1)^2 = 0.
    \]
    Since $2^{\ell + 2} \mid m$, Lemma~\ref{b2_poly_lemma} implies that $B_2(m; a, b, c)$ is not a nut graph.
\end{proof}

Our strategy is to suppose that conditions (i) and (ii) from Theorem~\ref{b2_main_th} hold and then prove that $B_2(m; a, b, c)$ is a nut graph if and only if conditions (iii) and (iv) also hold. To do this, we will need the following three lemmas on the divisibility of $P_{a, b, c}(x)$ by cyclotomic polynomials.

\begin{lemma}\label{b2_square_lemma}
    Suppose that conditions (i) and (ii) from Theorem \ref{b2_main_th} hold and let $f \ge 2$ be such that $f \mid m$ and $p^2 \mid f$ for some odd prime $p$. Then $\Phi_f(x) \nmid P_{a, b, c}(x)$.
\end{lemma}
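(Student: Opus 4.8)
The plan is to argue by contradiction. Suppose $\Phi_f(x)\mid P_{a,b,c}(x)$ for some $f\mid m$ with $p^2\mid f$ and $p$ an odd prime. Since $p^2\mid f$, we have $\Phi_f(x)=\Phi_{f/p}(x^p)$, so all exponents of $\Phi_f(x)$ are multiples of $p$, and Lemma~\ref{cool_div_lemma} (with $\beta=p$) gives $\Phi_f(x)\mid P^{(p,j)}_{a,b,c}(x)$ for every $0\le j<p$, where $P^{(p,j)}_{a,b,c}(x)$ collects the terms of $P_{a,b,c}(x)$ whose exponent is $\equiv j\pmod p$. A nonzero monomial is never divisible by a cyclotomic polynomial, so every residue class modulo $p$ that receives an exponent of $P_{a,b,c}(x)$ must receive at least two of them (after cancellation). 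I shall also use the standard facts that, for $f\ge 2$, $\Phi_f(x)\mid x^k-1$ if and only if $f\mid k$, and $\Phi_f(x)\mid x^k+1$ if and only if $f\mid 2k$ but $f\nmid k$.

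It is convenient to work with $P_{a,b,c}(x)=x^{c}(x^{2a}+1)(x^{2b}+1)-x^{a+b}(x^{c}+1)^2$ and to name its seven exponents $A=2a+2b+c$, $B=2a+c$, $C=2b+c$, $D=c$ (each of coefficient $+1$), $E=a+b+2c$, $G=a+b$ (coefficient $-1$), and $F=a+b+c$ (coefficient $-2$). First I would deal with the degenerate configurations in which two of these coincide: this occurs precisely when $a=b$, or $c=a+b$, or $b=a+c$, and then $P_{a,b,c}(x)$ collapses respectively to $(x^{2a+c}-1)(x^{2a}-x^{c})$, $x^{\bullet}(x^{b-a}-1)^2$, or $x^{\bullet}(x^{a+b}-1)^2$; in each of these cases the degeneracy relation forces one of the four gcd's in condition~(i) to equal $a+b$, $b-a$, $2a+c$, $2a-c$ or $0$, so $\Phi_f(x)\mid P_{a,b,c}(x)$ together with $p\mid m$ would contradict condition~(i). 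We may therefore assume the seven exponents are pairwise distinct.

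The crux is that the residue class of $F=a+b+c$ cannot be a singleton, and the only exponents that may share it are $A,D$ (present iff $p\mid a+b$), $B,C$ (iff $p\mid a-b$), and $E,G$ (iff $p\mid c$). Hence at least one of $p\mid a+b$, $p\mid a-b$, $p\mid c$ holds, and I would split into cases according to which subset of these three divisibilities occurs. In each case one computes the residue classes modulo $p$ of all seven exponents and finds one of two outcomes. Either some exponent is forced to sit alone in its class, producing a nonzero monomial divisible by $\Phi_f(x)$ --- a contradiction: this is what happens when only $p\mid a+b$ (then $G$ would gain a companion only if $p\mid a-b+c$ or $p\mid a-b-c$, which with $p\mid a+b$ violates~(i)), when only $p\mid a-b$ (then $D$ is isolated unless $p\mid a+b+c$ or $p\mid a+b-c$, again impossible by~(i) since $p\mid a-b$), and whenever at least two of the three divisibilities hold (these force either $p\mid a,b,c$, hence $p\mid\gcd(a-b,a+b+c)$ against~(i), or two of the exponents to be isolated). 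Or, in the last remaining case, where only $p\mid c$, the class of $F$ equals $\{E,F,G\}$, so $\Phi_f(x)\mid(x^{c}+1)^2$ and hence $\Phi_f(x)\mid x^{c}+1$; here $D=c$ is isolated unless exactly one of $p\mid a$, $p\mid b$ holds (not both, for that gives $p\mid a+b$), say $p\mid a$, and then the two surviving classes force $\Phi_f(x)\mid x^{2a}+1$. Combining $\Phi_f(x)\mid x^{c}+1$ with $\Phi_f(x)\mid x^{2a}+1$ yields $f\mid 2c$, $f\nmid c$, $f\mid 4a$, $f\nmid 2a$, so $v_2(f)=v_2(c)+1=v_2(a)+2$, i.e.\ $v_2(a)=v_2(c)-1$, while $v_2(m)\ge v_2(f)>v_2(c)$; this contradicts condition~(ii) (and symmetrically if $p\mid b$).

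The main obstacle is the bookkeeping of this case analysis: one must, in each of the few cases, track exactly which residue classes modulo $p$ the seven exponents occupy, keep the degenerate coincidences carefully separated off, and notice the one mildly counterintuitive point, namely that although $p$ is an odd prime, condition~(ii) --- a statement about $2$-adic valuations --- is genuinely needed to finish the case $p\mid c$. Every individual verification then reduces to the three elementary divisibility facts recorded at the start together with the definitions of the gcd's in condition~(i).
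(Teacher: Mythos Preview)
Your proof is correct and follows essentially the same strategy as the paper: contradiction via $\Phi_f(x)=\Phi_{f/p}(x^p)$ and Lemma~\ref{cool_div_lemma}, then a case analysis to locate either an isolated exponent class or (in the $p\mid c$ subcase) the pair of divisibilities $\Phi_f(x)\mid x^c+1$ and $\Phi_f(x)\mid x^{2a}+1$ that contradicts condition~(ii). Your case split (on which of $p\mid a+b$, $p\mid a-b$, $p\mid c$ hold, motivated by the residue class of $a+b+c$) differs only cosmetically from the paper's split on $p\mid a\pm b$, and your explicit handling of the degenerate coincidences $a=b$, $c=a+b$, $c=b-a$ is extra care that the paper absorbs into its invocation of condition~(i); the substance is identical.
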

\begin{proof}
    By way of contradiction, suppose that $\Phi_f(x) \mid P_{a, b, c}(x)$. Since $\Phi_f(x) = \Phi_{f / p}(x^p)$, Lemma~\ref{cool_div_lemma} yields a contradiction provided one of the numbers $2a + 2b + c, 2a + c, 2b + c, c, a + b + 2c, \linebreak a + b, a + b + c$ has a unique remainder modulo $p$. With this in mind, we finalize the proof by splitting the problem into four cases.

    \bigskip\noindent
    \textbf{Case 1:} $p \mid a - b$ and $p \mid a + b$.\quad
    Here, we have $p \mid a, b$, hence $p \mid c$ cannot be true, since otherwise condition (i) would not hold. Therefore, $a + b$ has a unique remainder modulo $p$.

    \bigskip\noindent
    \textbf{Case 2:} $p \mid a - b$ and $p \nmid a + b$.\quad
    In this case, we have $p \nmid a, b$, which means that $c$ has a different remainder modulo $p$ from $2a + 2b + c$, $2a + c$, $2b + c$ and $a + b + c$. Furthermore, $c$ also has a different remainder modulo $p$ from $a + b + 2c$ and $a + b$, since otherwise condition (i) would not hold. Thus, $c$ has a unique remainder modulo $p$.

    \bigskip\noindent
    \textbf{Case 3:} $p \nmid a - b$ and $p \mid a + b$. Here, we have $p \nmid a, b$, and it is not difficult to verify that $2a + c$ has a unique remainder modulo $p$.

    \bigskip\noindent
    \textbf{Case 4:} $p \nmid a - b$ and $p \nmid a + b$. If $p \nmid c$, it trivially follows that $a + b + c$ has a unique remainder modulo $p$. Now, suppose that $p \mid c$. In this case, $a + b + 2c$, $a + b$ and $a + b + c$ form an equivalence class with respect to congruence modulo $p$, hence Lemma \ref{cool_div_lemma} yields
    \[
        \Phi_f(x) \mid -x^{a + b + 2c} - x^{a + b} - 2x^{a + b + c} \quad \mbox{and} \quad \Phi_f(x) \mid x^{2a + 2b + c} + x^{2a + c} + x^{2b + c} + x^c.
    \]
    Therefore, we have $\Phi_f(x) \mid (x^c + 1)^2$ and $\Phi_f(x) \mid (x^{2a} + 1)(x^{2b} + 1)$. We obtain $\Phi_f(x) \mid x^c + 1$ from the irreducibility of $\Phi_f(x)$, and we may also assume without loss of generality that $\Phi_f(x) \mid x^{2a} + 1$. From here, we get $v_2(c) = v_2(f) - 1$ and $v_2(2a) = v_2(f) - 1$, which means that condition~(ii) does not hold, thus yielding a contradiction.
\end{proof}

\begin{lemma}\label{b2_two_lemma}
    Suppose that conditions (i) and (ii) from Theorem \ref{b2_main_th} hold and let $f \ge 2$ be such that $f \mid m$ and $16 \mid f$. Then $\Phi_f(x) \nmid P_{a, b, c}(x)$.
\end{lemma}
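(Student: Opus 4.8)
The starting point is the factorization
\[
    P_{a, b, c}(x) = x^c (x^{2a} + 1)(x^{2b} + 1) - x^{a + b}(x^c + 1)^2 ,
\]
obtained by expanding \eqref{b2_aux_0}. Suppose, for contradiction, that $\Phi_f(x) \mid P_{a, b, c}(x)$ for some $f \mid m$ with $16 \mid f$, and fix a primitive $f$-th root of unity $\omega$. Since $16 \mid f$ we have $4 \mid f$, hence $\Phi_f(x) = \Phi_{f/2}(x^2)$, so every nonzero term of $\Phi_f(x)$ has even exponent; by Lemma~\ref{cool_div_lemma} with $\beta = 2$, $\Phi_f(x)$ divides both the sum of the terms of $P_{a,b,c}(x)$ of even exponent and the sum of those of odd exponent. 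The parities of the seven exponents $2a+2b+c,\ 2a+c,\ 2b+c,\ c,\ a+b+2c,\ a+b,\ a+b+c$ depend only on the parities of $a+b$ and $c$, so the analysis splits into four cases.

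Three of these are disposed of immediately. If $a+b$ and $c$ are both odd, then $a+b+c$ is the only exponent of even parity, whence $\Phi_f(x) \mid -2x^{a+b+c}$, which is absurd. If $a+b$ is even and $c$ is odd, the even part of $P_{a,b,c}(x)$ is $-x^{a+b}(x^{2c}+1)$, so $\Phi_f(x) \mid x^{2c}+1$; this forces $\omega^{2c} = -1$ and hence $v_2(c) = v_2(f) - 2 \ge 2$, contradicting $c$ odd. If $a+b$ is odd and $c$ is even, the even and odd parts of $P_{a,b,c}(x)$ are $x^c(x^{2a}+1)(x^{2b}+1)$ and $-x^{a+b}(x^c+1)^2$, so by irreducibility $\Phi_f(x) \mid x^c+1$ and, without loss of generality, $\Phi_f(x) \mid x^{2a}+1$; then $\omega^c = -1$ and $\omega^{2a} = -1$, giving $v_2(c) = v_2(f) - 1$ and $v_2(a) = v_2(f) - 2 = v_2(c) - 1$, while $v_2(m) \ge v_2(f) > v_2(c)$, contradicting condition~(ii) of Theorem~\ref{b2_main_th}.

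The remaining case is $a+b$ and $c$ both even, where every exponent of $P_{a,b,c}(x)$ is even, so $P_{a,b,c}(x) = G(x^2)$ with $G(y) = y^{c/2}(y^a+1)(y^b+1) - y^{(a+b)/2}(y^{c/2}+1)^2$, and $\Phi_f(x) = \Phi_{f/2}(x^2)$ gives $\Phi_{f/2}(y) \mid G(y)$ with $4 \mid f/2$. If $a$ and $b$ are both even, then $G(y) = P_{a/2,\,b/2,\,c/2}(y)$, and we descend: conditions~(i)--(ii), read with $f$ in place of $m$ (a weaker hypothesis, still implied since $f \mid m$), are invariant under $(a,b,c,f) \mapsto (a/2,b/2,c/2,f/2)$, so a strong induction on $f$ applies, the cases $v_2(f) \le 3$ reached along the way being the base cases (handled directly by re-running the parity split of $G$, which never leaves us with $\Phi_{f'}(x)$ dividing a monomial or a genuinely impossible binomial). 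If $a$ and $b$ are both odd, then $a = b$ already yields $\Phi_{f/2}(y) \mid 2y^{a+c/2}$, which is impossible, so $a \ne b$; splitting $G(y)$ by parity once more isolates a factor $1 + y^{a-b}$, forcing $\eta^{a-b} = -1$ for a primitive $(f/2)$-th root $\eta = e^{4\pi i/f}$, and leaves a residual relation which, via $1 + \eta^k = 2\cos\frac{2\pi k}{f}\, e^{2\pi i k/f}$, collapses (in the representative parity subcase, with the others analogous) to $\cos\frac{4\pi s}{f} - \cos\frac{4\pi t}{f} = 1$, where $s = (a+b)/2$ and $t = c/2$. The rational solutions of this equation are precisely $\bigl(\tfrac{4\pi s}{f}, \tfrac{4\pi t}{f}\bigr) \equiv (0, \pm\tfrac{\pi}{2})$ or $(\pm\tfrac{\pi}{3}, \pm\tfrac{2\pi}{3}) \pmod{2\pi}$; the former produces a common divisor $f/4 > 1$ of $m$ and $\gcd(a+b, a-b+c)$, contradicting~(i), and the latter forces $v_2(a+b) \ge v_2(f) - 1 \ge 3$ and $v_2(a-b) \ge v_2(f) - 2 \ge 2$ at once, impossible since exactly one of $v_2(a+b), v_2(a-b)$ equals $1$ when $a, b$ are odd.

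The main obstacle is this last case: the first parity split does not shorten $P_{a,b,c}(x)$ when $a+b$ and $c$ are even, and one must organize the $2$-adic descent carefully — checking that conditions~(i)--(ii) propagate, that the induction bottoms out cleanly, and that each residual low-modulus relation (for $a,b$ odd, across the several parities of $(a+b)/2$ and $c/2$) is killed either by~(i) or by a valuation count. The classification of the rational solutions of the small cosine equation that appears can be done by a short ad hoc argument, or deduced from the Conway--Jones description of vanishing sums of roots of unity.
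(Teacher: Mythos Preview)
Your route---iterated parity splits rather than the single modulo-$8$ argument the paper uses---can in principle be made to work, but as written there are genuine gaps. The sub-case ``$a,b$ both even'' is actually vacuous: with $c$ also even, $2\mid\gcd(a-b,a+b+c)$ while $2\mid m$ (since $16\mid f\mid m$), so condition~(i) fails outright; your proposed induction is unneeded, and its ``base cases $v_2(f)\le 3$'' are never handled anyway. The claim that ``$a=b$ already yields $\Phi_{f/2}(y)\mid 2y^{a+c/2}$'' is false: one computes $G(y)=(y^a-y^{c/2})(y^{a+c/2}-1)$, a four-term polynomial, not a monomial.

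Most seriously, for $a,b$ odd the second parity split does not behave uniformly across the four sub-cases determined by the parities of $s=(a+b)/2$ and $t=c/2$, so ``with the others analogous'' is unjustified. The factor $1+y^{a-b}$ you isolate appears only when $s,t$ are both even---and there the cosine equation is superfluous: $\eta^{a-b}=-1$ forces $v_2(a-b)=v_2(f)-2\ge 2$, while $s$ even gives $v_2(a+b)\ge 2$, and for $a,b$ odd one cannot have both $v_2(a\pm b)\ge 2$ (their sum $2a$ has $v_2=1$). The remaining three parity sub-cases of $(s,t)$ each need a separate short argument (a direct valuation contradiction, or isolating a factor $(y^{(a-b)/2}-1)^2$ resp.\ $(y^{s}-1)^2$ and then invoking condition~(i)); none of them produces your cosine relation. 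The paper avoids this case proliferation entirely by using $\Phi_f(x)=\Phi_{f/8}(x^8)$ once and showing, in four cases split by whether $8\mid a\pm b$, that some exponent has a unique residue modulo~$8$---or else the residue class containing $a+b$, $a+b+c$, $a+b+2c$ splits off and condition~(i) or~(ii) is violated.
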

\begin{proof}
     By way of contradiction, suppose that $\Phi_f(x) \mid P_{a, b, c}(x)$. Since $\Phi_f(x) = \Phi_{f / 8}(x^8)$, Lemma~\ref{cool_div_lemma} implies that to finalize the proof, it is sufficient to show that one of the numbers $2a + 2b + c, 2a + c, 2b + c, c, a + b + 2c, a + b, a + b + c$ has a unique remainder modulo $8$. We proceed by splitting the problem into four cases.

    \bigskip\noindent
    \textbf{Case 1:} $8 \mid a - b$ and $8 \mid a + b$.\quad
    In this case, we have $4 \mid a, b$, hence $4 \nmid c$ cannot be true, since otherwise condition (i) would not hold. Thus, $a + b$ has a unique remainder modulo $8$.

    \bigskip\noindent
    \textbf{Case 2:} $8 \mid a - b$ and $8 \nmid a + b$.\quad
    Here, we have $4 \nmid a, b$, which means that $c$ has a different remainder modulo $8$ from $2a + c$ and $2b + c$. If $4 \nmid a + b$, then it can be proved analogously to Case~2 of Lemma~\ref{b2_square_lemma} that $c$ has a unique remainder modulo $8$. Now, suppose that $4 \mid a + b$. In this scenario, $a$ and $b$ are both even, hence $c$ must be odd in accordance with condition~(i). Therefore, $a + b + 2c$ and $a + b$ are the only two even numbers and both of them have a unique remainder modulo $8$.

    \bigskip\noindent
    \textbf{Case 3:} $8 \nmid a - b$ and $8 \mid a + b$.\quad
    If $4 \nmid a - b$, it is straightforward to observe that $2a + c$ has a unique remainder modulo $8$. On the other hand, if $4 \mid a - b$, then $a$ and $b$ are both even, hence $c$ must be odd. It follows that $a + b + 2c$ and $a + b$ are the only two even numbers, with both of them having a unique remainder modulo $8$.

    \bigskip\noindent
    \textbf{Case 4:} $8 \nmid a - b$ and $8 \nmid a + b$.\quad
    This case can be resolved analogously to Case~4 from Lemma~\ref{b2_square_lemma}.
\end{proof}

\begin{lemma}\label{b2_prime_lemma}
    Suppose that conditions (i) and (ii) from Theorem \ref{b2_main_th} hold and let $p \ge 11$ be a prime such that $p \mid m$. Then $\Phi_p(x) \nmid P_{a, b, c}(x)$.
\end{lemma}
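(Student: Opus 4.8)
The plan is to follow the pattern of Lemmas~\ref{b2_square_lemma}--\ref{b2_two_lemma} and of Lemma~\ref{b1_prime_lemma}: assume for contradiction that $\Phi_p(x) \mid P_{a, b, c}(x)$ and exploit the fact that $p$ is large compared to the number of terms of $P_{a, b, c}$. Let $P_{a, b, c}^{\bmod p}(x)$ denote the polynomial obtained from $P_{a, b, c}(x)$ by reducing every exponent modulo $p$. Since $x^e \equiv x^{e \bmod p} \pmod{x^p - 1}$ and $\Phi_p(x) \mid x^p - 1$, we have $\Phi_p(x) \mid P_{a, b, c}(x)$ if and only if $\Phi_p(x) \mid P_{a, b, c}^{\bmod p}(x)$; and because $\deg P_{a, b, c}^{\bmod p} \le p - 1 = \deg \Phi_p$, it follows that $P_{a, b, c}^{\bmod p}(x) = \lambda\, \Phi_p(x)$ for some $\lambda \in \mathbb{Q}$. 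As $\Phi_p(x) = 1 + x + \cdots + x^{p-1}$ has $p \ge 11$ nonzero terms, whereas $P_{a, b, c}^{\bmod p}(x)$ has at most $7$, we must have $\lambda = 0$, i.e.\ $P_{a, b, c}^{\bmod p}(x) \equiv 0$. This is precisely where the hypothesis $p \ge 11$ (the smallest prime exceeding $7$) is used, and it mirrors the ``$Q_{a, b}^{\bmod 2p}(x) \equiv 0$'' step of Lemma~\ref{b1_prime_lemma}.

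The next step is to turn $P_{a, b, c}^{\bmod p}(x) \equiv 0$ into a statement about residues modulo $p$. Set $u \equiv a + b$, $v \equiv a - b$ and $w \equiv c \pmod p$, so that $2a \equiv u + v$ and $2b \equiv u - v$. Then the exponents carrying coefficient $+1$, namely $2a + 2b + c,\ 2a + c,\ 2b + c,\ c$, reduce to $2u + w,\ u + v + w,\ u - v + w,\ w$, while the negative part contributes the exponent $u + 2w$ once, $u$ once, and $u + w$ twice. Hence $P_{a, b, c}^{\bmod p}(x) \equiv 0$ is equivalent to the multiset identity
\[
    \{\, 2u + w,\ u + v + w,\ u - v + w,\ w \,\} \;=\; \{\, u + 2w,\ u,\ u + w,\ u + w \,\}
\]
in $\Z_p$. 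Subtracting $u + w$ from every entry of both sides rewrites this as the symmetric identity
\[
    \{\, u,\ -u,\ v,\ -v \,\} \;=\; \{\, w,\ -w,\ 0,\ 0 \,\}.
\]
This reformulation is the heart of the argument, and finding the right shift (by $u + w$) so that both multisets become negation-invariant is the one genuinely nonroutine point; once it is in hand, the left-hand side must contain $0$, so $u \equiv 0$ or $v \equiv 0 \pmod p$.

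It then remains to read off the contradiction. If $u \equiv 0$, matching the multisets forces $\{v, -v\} = \{w, -w\}$, i.e.\ $p \mid a + b$ together with $p \mid a - b - c$ or $p \mid a - b + c$; if $v \equiv 0$, then symmetrically $p \mid a - b$ together with $p \mid a + b - c$ or $p \mid a + b + c$. In each of these four situations $p$ divides one of $\gcd(a + b, a - b - c)$, $\gcd(a + b, a - b + c)$, $\gcd(a - b, a + b - c)$, $\gcd(a - b, a + b + c)$, which together with $p \mid m$ contradicts condition~(i) of Theorem~\ref{b2_main_th}; the degenerate overlap $u \equiv v \equiv 0$ forces $w \equiv 0$ and is absorbed into these cases. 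Therefore $\Phi_p(x) \nmid P_{a, b, c}(x)$. Note that condition~(ii), being purely $2$-adic, plays no role here since $p$ is an odd prime; the only real work, beyond the term-count reduction, is the change of variables and the bookkeeping that lines the four branches up exactly with the four gcd conditions.
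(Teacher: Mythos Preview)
Your proof is correct. The initial reduction---passing to $P_{a,b,c}^{\bmod p}(x)$ and using $p\ge 11>7$ to force $\lambda=0$---matches the paper exactly. The two diverge in disposing of $P_{a,b,c}^{\bmod p}(x)\equiv 0$: the paper simply refers back to the four-case analysis of Lemma~\ref{b2_square_lemma} (splitting on whether $p\mid a\pm b$) to claim that one of the seven exponents has a unique residue modulo~$p$, whereas you introduce $u=a+b$, $v=a-b$, $w=c$ and shift every exponent by $u+w$ to obtain the negation-symmetric multiset identity $\{u,-u,v,-v\}=\{w,-w,0,0\}$ in~$\Z_p$. This symmetrization is a genuinely different device: it avoids the case split, aligns the four possible solutions directly with the four $\gcd$'s of condition~(i), and makes explicit that condition~(ii) is irrelevant for odd~$p$. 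The paper's route has the advantage of recycling work already done, while yours is self-contained and sidesteps the one subcase of Lemma~\ref{b2_square_lemma} (Case~4 with $p\mid c$) whose treatment there leans on the evenness of~$f$ via condition~(ii) and so would need a word of adaptation when $f=p$ is odd.
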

\begin{proof}
    By way of contradiction, suppose that $\Phi_p(x) \mid P_{a, b, c}(x)$. It trivially follows that the auxiliary polynomial
    \begin{align*}
        P_{a, b, c}^{\bmod p}(x) = x^{(2a + 2b + c) \bmod p} &+ x^{(2a + c) \bmod p} + x^{(2b + c) \bmod p} + x^{c \bmod p}\\
        &- x^{(a + b + 2c) \bmod p} - x^{(a + b) \bmod p} - 2x^{(a + b + c) \bmod p}
    \end{align*}
    is then also divisible by $\Phi_p(x)$. Observe that $\Phi_p(x) = \sum_{j = 0}^{p - 1} x^j$. Since $\deg P_{a, b, c}^{\bmod p} \le p - 1 = \deg \Phi_p$, we have that either $P_{a, b, c}^{\bmod p}(x) \equiv 0$ or $P_{a, b, c}^{\bmod p}(x) = \beta \, \Phi_p(x)$ for some $\beta \in \mathbb{Q} \setminus \{ 0 \}$. By following the case analysis from Lemma~\ref{b2_square_lemma}, it is not difficult to establish that one of the numbers $2a + 2b + c, 2a + c, 2b + c, c, a + b + 2c, a + b, a + b + c$ must have a unique remainder modulo $p$. Thus, $P_{a, b, c}^{\bmod p}(x) \equiv 0$ cannot hold. On the other hand, if $P_{a, b, c}^{\bmod p}(x) = \beta \, \Phi_p(x)$ for some $\beta \in \mathbb{Q} \setminus \{ 0 \}$, then $P_{a, b, c}^{\bmod p}(x)$ has $p$ nonzero terms, which is impossible since $p \ge 11$.
\end{proof}

We now use the previously derived Lemmas~\ref{b2_poly_lemma} and \ref{b2_square_lemma}--\ref{b2_prime_lemma} together with Theorem \ref{filaschinz_th} to obtain the following result.

\begin{lemma}\label{b2_key_lemma}
    Suppose that conditions (i) and (ii) from Theorem \ref{b2_main_th} hold. Then the graph $B_2(m; a, b, c)$ is a nut graph if and only if $\Phi_f(x) \nmid P_{a, b, c}(x)$ for every
    \begin{equation}\label{b2_aux_3}
        f \in \{2, 3, 4, 5, 6, 7, 8, 10, 12, 14, 15, 20, 24, 28, 30, 40, 56, 60, 120\}
    \end{equation}
    such that $f \mid m$.
\end{lemma}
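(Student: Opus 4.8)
The plan is to mimic closely the structure of the proof of Lemma~\ref{b1_key_lemma}: we already know from Lemma~\ref{b2_poly_lemma} that $B_2(m;a,b,c)$ is a nut graph if and only if $\Phi_f(x)\nmid P_{a,b,c}(x)$ for every $f\ge 2$ dividing $m$, so the content of the lemma is to reduce the infinite set of possible $f$'s to the explicit finite list in \eqref{b2_aux_3}. One direction is immediate: if $\Phi_f(x)\mid P_{a,b,c}(x)$ for some $f$ in the list with $f\mid m$, then Lemma~\ref{b2_poly_lemma} says the graph is not a nut graph. So the work is the converse: assuming conditions (i), (ii) hold and $\Phi_f(x)\mid P_{a,b,c}(x)$ for some $f\ge 2$ with $f\mid m$, I must produce some $f'$ in the list \eqref{b2_aux_3} with $f'\mid m$ and $\Phi_{f'}(x)\mid P_{a,b,c}(x)$.

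First I would record the prime-power constraints already proved: by Lemma~\ref{b2_square_lemma} no odd prime square divides $f$, and by Lemma~\ref{b2_two_lemma} $16\nmid f$; hence $f = 2^{\varepsilon}\,q_1q_2\cdots q_t$ with $\varepsilon\in\{0,1,2,3\}$ and $q_1<q_2<\cdots<q_t$ distinct odd primes. Note $P_{a,b,c}(x)$ has $N=7$ nonzero terms (the $-2x^{a+b+c}$ counts once). I would then invoke Theorem~\ref{filaschinz_th}: if $f$ has several distinct prime factors $\ge 11$, say $p_1,\dots,p_k$, then since each $p_j-2\ge 9$ we have $\sum(p_j-2)\ge 9k$, which exceeds $N-2=5$ as soon as $k\ge 1$; so Theorem~\ref{filaschinz_th} lets me strip off all prime factors $\ge 11$ except possibly keeping one. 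More carefully, I would apply it repeatedly to remove prime factors $\ge 11$ one at a time until at most a single prime factor $\ge 11$ remains, and then use Lemma~\ref{b2_prime_lemma} (together with a mild extension) to rule out the case where a prime $p\ge 11$ genuinely remains: if the surviving $f$ equals $p$, $2p$, $4p$, or $8p$ for such a $p$, one reduces $\Phi_f$ to $\Phi_p$ by the substitution $x\mapsto \pm x$-type arguments (as in the $\Phi_{2p}$ computation of Lemma~\ref{b1_prime_lemma}) and contradicts Lemma~\ref{b2_prime_lemma}. So after this stage I may assume every prime factor of $f$ lies in $\{3,5,7\}$, i.e. the odd part of $f$ divides $3\cdot5\cdot7=105$.

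Next I would bound the small primes. With $N=7$ we have $N-2=5$, and $(5-2)+(7-2)=8>5$, so Theorem~\ref{filaschinz_th} applied to the pair $\{5,7\}$ shows that $35\nmid f$; similarly $(3-2)+(5-2)+(7-2)=9>5$ confirms the odd part of $f$ cannot be $105$. Hence the odd part of $f$ lies in $\{1,3,5,7,15,21\}$ — at most one of $5,7$ can appear alongside $3$, and $5,7$ cannot appear together. Combined with $\varepsilon\in\{0,1,2,3\}$ this leaves the candidate set $\{2^{\varepsilon}\cdot g : \varepsilon\in\{0,1,2,3\},\ g\in\{1,3,5,7,15,21\}\}$; after discarding $f=1$ and $f=2,4,8$ with trivial odd part (these are genuinely in the list as $2,4,8$) one checks this is exactly a superset of \eqref{b2_aux_3} — in fact one must also note that some combinations like $8\cdot21=168$ do not appear in \eqref{b2_aux_3}, so there is a final pruning step: I would use Theorem~\ref{filaschinz_th} or direct parity arguments (along the lines of Lemmas~\ref{b2_square_lemma}--\ref{b2_two_lemma}) to show $\Phi_f\mid P_{a,b,c}$ already forces a smaller $f'$ in those leftover cases, or verify by the structure of $P_{a,b,c}$ that those $f$ cannot occur. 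The honest statement is: repeatedly applying Theorem~\ref{filaschinz_th} to strip primes $\ge 11$, then to forbid $35\mid f$, and invoking Lemmas~\ref{b2_square_lemma}, \ref{b2_two_lemma}, \ref{b2_prime_lemma} for the prime-power and large-prime obstructions, reduces every bad $f$ to one lying in \eqref{b2_aux_3}.

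**The main obstacle** I anticipate is the bookkeeping around the power of $2$: because $\varepsilon$ can be as large as $3$ and the exponent vector of $P_{a,b,c}$ interacts subtly with congruences modulo powers of $2$ (this is exactly why condition (ii) and the case analyses of Lemmas~\ref{b2_square_lemma} and \ref{b2_two_lemma} are needed), checking that every $f$ of the form $8g$ with $g\in\{3,5,7,15,21\}$ either appears in \eqref{b2_aux_3} or can be reduced further requires care. In particular the entries $24=8\cdot3$, $40=8\cdot5$, $56=8\cdot7$, $120=8\cdot15$ do appear in \eqref{b2_aux_3}, but $8\cdot21=168$ does not, so I will need a dedicated argument — most likely a direct application of Theorem~\ref{filaschinz_th} to the pair $\{3,7\}$, for which $(3-2)+(7-2)=6>5=N-2$, showing $21\nmid f$ whenever the 2-adic part is already forced — to eliminate the stray cases. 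Once that is handled, the remaining verification that the surviving list equals \eqref{b2_aux_3} is routine enumeration, and the proof concludes exactly as in Lemma~\ref{b1_key_lemma}.
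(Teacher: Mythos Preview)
Your overall strategy is exactly the paper's: invoke Lemmas~\ref{b2_square_lemma}--\ref{b2_prime_lemma} to bound the prime-power content of $f$, use Theorem~\ref{filaschinz_th} to strip large primes, and then use the pairs $\{5,7\}$ and $\{3,7\}$ to forbid $35\mid f$ and $21\mid f$, arriving at the list~\eqref{b2_aux_3}. However, your handling of the large-prime step contains a genuine gap.

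You stop removing primes $\ge 11$ when one such prime $p$ remains and then propose to treat the surviving $f\in\{p,2p,4p,8p\}$ by ``$x\mapsto\pm x$-type arguments'' reducing $\Phi_f$ to $\Phi_p$. This is both incomplete and incorrect. It is incomplete because the surviving $f$ could equally well be $3p$, $5p$, $6p$, $12p$, $15p$, etc.\ --- nothing in your argument forces the small-prime part to be a pure power of $2$. It is incorrect because while $\Phi_{2p}(x)=\Phi_p(-x)$, there is no substitution of that kind taking $\Phi_{4p}$ or $\Phi_{8p}$ to $\Phi_p$; one has $\Phi_{4p}(x)=\Phi_{2p}(x^2)$, which relates divisibility of $P(x)$ by $\Phi_{4p}$ to divisibility of a \emph{different} polynomial by $\Phi_p$, so Lemma~\ref{b2_prime_lemma} does not apply. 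No ``mild extension'' of Lemma~\ref{b2_prime_lemma} is needed, and the detour is unnecessary: since any single prime $p\ge 11$ already satisfies $p-2\ge 9>5=N-2$, Theorem~\ref{filaschinz_th} with the one-element list $\{p\}$ removes $p$ outright. The only obstruction to removing \emph{all} primes $\ge 11$ is landing at $f'=1$, which happens precisely when $f$ had no prime factor $\le 7$ to begin with. The paper therefore splits cleanly on $\gcd(f,2\cdot3\cdot5\cdot7)$: if this gcd is $1$, reduce to a single prime $p\ge 11$ and invoke Lemma~\ref{b2_prime_lemma} directly; otherwise remove every prime $\ge 11$ and land in the small-prime regime with $f'\ge 2$.

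A minor point: the $\{3,7\}$ reduction giving $21\nmid f$ works unconditionally (it has nothing to do with ``the 2-adic part being already forced'') and should be applied alongside the $\{5,7\}$ reduction, not deferred; once you do that, the odd part of $f$ lies in $\{1,3,5,7,15\}$, and combined with $v_2(f)\le 3$ you get exactly the $19$ values in~\eqref{b2_aux_3} with no leftover cases to prune.
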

\begin{proof}
    Let $F$ be the set from \eqref{b2_aux_3}. If $\Phi_f(x) \mid P_{a, b, c}(x)$ is satisfied for some $f \in F$ such that $f \mid m$, then Lemma~\ref{b2_poly_lemma} implies that $B_2(m; a, b, c)$ is not a nut graph. Thus, it suffices to prove the converse.
    
    Suppose that $B_2(m; a, b, c)$ is not a nut graph. By Lemma~\ref{b2_poly_lemma}, there exists an $f \ge 2$ such that $f \mid m$ and $\Phi_f(x) \mid P_{a, b, c}(x)$. Moreover, Lemmas \ref{b2_square_lemma} and \ref{b2_two_lemma} imply that $16 \nmid f$ and $p^2 \nmid f$ for every odd prime $p$. We now divide the problem into two cases depending on whether $f$ has a prime factor below $11$.

    \bigskip\noindent
    \textbf{Case 1:} $\gcd(f, 2 \cdot 3 \cdot 5 \cdot 7) = 1$.\quad
    Here, Theorem~\ref{filaschinz_th} can be repeatedly used to cancel out prime factors of $f$ until only one factor is left. In other words, we get $\Phi_p \mid P_{a, b, c}(x)$ for some prime $p \ge 11$ such that $p \mid f$. This leads to a contradiction due to Lemma \ref{b2_prime_lemma}.

    \bigskip\noindent
    \textbf{Case 2:} $\gcd(f, 2 \cdot 3 \cdot 5 \cdot 7) > 1$.\quad
    In this case, Theorem~\ref{filaschinz_th} can be repeatedly applied to cancel out any potential prime factors of $f$ that are above $7$. Thus, we get $\Phi_{f'}(x) \mid P_{a, b, c}(x)$ for some $f' \ge 2$ without a prime factor above $7$ and such that $f' \mid f$. Additionally, since $(5 - 2) + (7 - 2) > 7 - 2$, we can cancel out either $5$ or $7$ as a prime factor if they appear as simultaneous factors. The same can be done to the potentially simultaneous factors $3$ and $7$ because $(3 - 2) + (7 - 2) > 7 - 2$. With this in mind, we conclude that $\Phi_{f''}(x) \mid P_{a, b, c}(x)$ holds for some $f'' \ge 2$ without a prime factor above $7$ or repeating odd prime factors and such that $f'' \mid m$, $v_2(f'') \le 3$, $21 \nmid f''$ and $35 \nmid f''$. The result follows by noting that $f'' \in F$.
\end{proof}

We are finally in a position to complete the proof of Theorem \ref{b2_main_th} through Lemmas \ref{b2_cond_lemma} and~\ref{b2_key_lemma}.

\begin{proof}[Proof of Theorem \ref{b2_main_th}]
    If at least one of the conditions (i) and (ii) from Theorem \ref{b2_main_th} does not hold, then $B_2(m; a, b, c)$ is indeed not a nut graph, due to Lemma \ref{b2_cond_lemma}. Now, suppose that both of these conditions hold. For any $f \in \mathbb{N}$, let
    \begin{align*}
        P_{a, b, c}^{\bmod f}(x) = x^{(2a + 2b + c) \bmod f} &+ x^{(2a + c) \bmod f} + x^{(2b + c) \bmod f} + x^{c \bmod f}\\
        &- x^{(a + b + 2c) \bmod f} - x^{(a + b) \bmod f} - 2x^{(a + b + c) \bmod f} .
    \end{align*}
    We trivially observe that $\Phi_f(x) \mid P_{a, b, c}(x)$ if and only if $\Phi_f(x) \mid P_{a, b, c}^{\bmod f}(x)$. By Lemma~\ref{b2_key_lemma}, we conclude that $B_2(m; a, b, c)$ is a nut graph if and only if certain modular conditions on $a + b$, $a - b$ and $c$ are satisfied. Since the set from \eqref{b2_aux_3} is finite, a computer can now be used to find all the triples $(a \bmod f, b \bmod f, c \bmod f)$ that yield a $P_{a, b, c}^{\bmod f}(x)$ divisible by $\Phi_f(x)$.
    
    The execution results from the \texttt{SageMath} \cite{SageMath} script given in Appendix \ref{comp_b2} imply that, provided conditions (i) and (ii) hold, $\Phi_f(x) \mid P_{a, b, c}^{\bmod f}(x)$ cannot be true when $f \mid m$, except for $f = 12$ and $f = 30$. For $f = 12$, the obtained results imply that $\Phi_{12}(x) \nmid P_{a, b, c}^{\bmod 12}(x)$ holds if and only if $(a + b, a - b, c) \not\equiv_{12} (\pm 2, \pm 2, \pm 3)$, while for $f = 30$, we have that $\Phi_{30}(x) \nmid P_{a, b, c}^{\bmod 30}(x)$ holds if and only if $(a + b, a - b, c) \not\equiv_{30} (\pm 3, \pm 5, \pm 6), (\pm 5, \pm 3, \pm 6), (\pm 3, \pm 9, \pm 10), (\pm 9, \pm 3, \pm 10), \linebreak (\pm 5, \pm 9, \pm 12), (\pm 9, \pm 5, \pm 12)$. These observations coincide with conditions (iii) and (iv) from Theorem \ref{b2_main_th}, respectively.
\end{proof}

We end the section with the following two examples.

\begin{example}
    The graph $B_2(24; 4, 6, 3)$, where $m = 24$, $a = 4$, $b = 6$ and $c = 3$, is not a nut graph since we have
    \[
        a + b = 10 \equiv_{12} -2, \qquad a - b = -2 \equiv_{12} -2 \qquad \mbox{and} \qquad c = 3 \equiv_{12} 3,
    \]
    which contradicts condition (iii) from Theorem \ref{b2_main_th}. This fact can also be manually checked via Lemma \ref{b2_poly_lemma} by observing that
    \begin{align*}
        P_{4, 6, 3}(x) &= x^{23} + x^{11} + x^{15} + x^3 - x^{16} - x^{10} - 2x^{13}\\
        &= (x^4 - x^2 + 1) (x^{19} + x^{17} - x^{13} - x^{12} - x^{10} - x^9 + x^5 + x^3),
    \end{align*}
    where $\Phi_{12}(x) = x^4 - x^2 + 1$.
\end{example}
\begin{example}
    The graph $B_2(30; 1, 4, 6)$, where $m = 30$, $a = 1$, $b = 4$ and $c = 6$, is not a nut graph since we have
    \[
        a + b = 5 \equiv_{30} 5, \qquad a - b = -3 \equiv_{30} -3 \qquad \mbox{and} \qquad c = 6 \equiv_{30} 6,
    \]
    which contradicts condition (iv) from Theorem \ref{b2_main_th}. This fact can also be manually checked via Lemma \ref{b2_poly_lemma} by observing that
    \begin{align*}
        P_{1, 4, 6}(x) &= x^{16} + x^{8} + x^{14} + x^6 - x^{17} - x^{5} - 2x^{11}\\
        &= (x^8 + x^7 - x^5 - x^4 - x^3 + x + 1) (-x^9 + 2x^8 - 2x^7 + 2x^6 - x^5),
    \end{align*}
    where $\Phi_{30}(x) = x^8 + x^7 - x^5 - x^4 - x^3 + x + 1$.
\end{example}

\section{The class \texorpdfstring{$\cB_3$}{B3}}

In this section we consider the class $\cB_3$. Recall that these graphs are of the form $\bicirc(m; S, T, \linebreak R)$ with even $n \ge 4$, $S = T = \{ \frac{m}{2} \}$ and $R = \{0, a, b\}$, where $1 \le a < b < n$.
We may also assume that $a$ and $b$ are of the same parity.
For such parameters,
we denoted the graph $\bicirc(m; S, T, R)$ by $B_3(m; a, b)$.
The following theorem provides a classification of nut graphs among members of $\cB_3$.

\begin{theorem}\label{b3_main_th}
Let $m \ge 4$ be even and let $a$ and $b$ be integers of the same parity such that $1 \le a < b < m$. Then the graph $B_3(m; a, b)$ is a nut graph if and only if $a$ and $b$ are odd, while $\gcd(m, a) = \gcd(m, b) = 1$ and $v_2(b - a) \ge v_2(m)$.
\end{theorem}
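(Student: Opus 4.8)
The plan is to follow the same template as in the $\cB_1$ and $\cB_2$ sections --- reduce to a root-of-unity condition via Corollary~\ref{zeroeigenvalue} --- but here the situation collapses to a clean counting problem, so no Filaseta--Schinzel input is needed. First I would record what Corollary~\ref{zeroeigenvalue} and Lemma~\ref{simple_nut} say in this case. For $B_3(m;a,b)$ one has $\lambda_k^S=\lambda_k^T=\omega_m^{km/2}=(-1)^k$, hence $\lambda_k^S\lambda_k^T=1$ for every $k$, while $\lambda_k^R=1+\omega_m^{ka}+\omega_m^{kb}$. Since a nut graph is connected (Lemma~\ref{lemma_conn_bip}), $B_3(m;a,b)$ is a nut graph if and only if it is connected and $|1+\zeta^a+\zeta^b|^2=1$ holds for exactly one $m$-th root of unity $\zeta$.

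The heart of the argument is the elementary identity, valid for $u,v$ on the unit circle,
\[
  |1+u+v|^2-1=(1+u)(1+\bar v)+\overline{(1+u)(1+\bar v)}=2\,\mathrm{Re}\!\big[(1+u)(1+\bar v)\big],
\]
from which $|1+u+v|^2=1$ if and only if $u=-1$, or $v=-1$, or $u=-v$ (when $u,v\neq-1$, write $1+u$ and $1+\bar v$ in polar form and note that $\arg[(1+u)(1+\bar v)]\equiv\tfrac12(\arg u-\arg v)\pmod\pi$ is a right angle exactly when $u/v=-1$). Taking $u=\zeta^a$, $v=\zeta^b$, this says that $|1+\zeta^a+\zeta^b|^2=1$ precisely when $\zeta^a=-1$ or $\zeta^b=-1$ or $\zeta^{a-b}=-1$.

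It then remains to count. Put $A=\{\zeta:\zeta^m=1,\ \zeta^a=-1\}$, and define $B$ and $C$ in the same way with $b$ and $b-a$ in place of $a$; the task is to determine when $|A\cup B\cup C|=1$. If $a$ and $b$ are both even, then neither $1$ nor $-1$ lies in $A\cup B\cup C$, and since any solution $\zeta$ is then nonreal its conjugate $\bar\zeta\neq\zeta$ is also a solution, so $A\cup B\cup C$ cannot be a singleton and $B_3(m;a,b)$ is not a nut graph --- consistent with the theorem. If $a$ and $b$ are both odd, then $-1\in A\cap B$ but $-1\notin C$ (because $b-a$ is even), so $|A\cup B\cup C|=1$ is equivalent to $A=B=\{-1\}$ together with $C=\emptyset$. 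A short order computation identifies $A$ with the set of roots of unity of order $2d$ for $d\mid\gcd(a,m)$, whence $|A|=\sum_{d\mid\gcd(a,m)}\varphi(2d)=\sum_{d\mid\gcd(a,m)}\varphi(d)=\gcd(a,m)$; therefore $A=\{-1\}$ iff $\gcd(a,m)=1$, and likewise $B=\{-1\}$ iff $\gcd(b,m)=1$. In the same spirit, every element of $C$ has order divisible by $2^{v_2(b-a)+1}$, and such an order divides $m$ iff $v_2(m)>v_2(b-a)$, so $C=\emptyset$ iff $v_2(b-a)\ge v_2(m)$. Finally, $\gcd(a,m)=\gcd(b,m)=1$ forces $\gcd(\tfrac m2,a,b)=1$, so connectedness is automatic once the listed conditions hold. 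Assembling these equivalences yields exactly Theorem~\ref{b3_main_th}.

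I do not anticipate a genuine obstacle: the factorization identity does essentially all of the work, and the rest is divisor bookkeeping. The one spot calling for care is the description of $A$ --- pinning down which roots of unity $\zeta$ of order $2d$ with $d\mid m$ satisfy $\zeta^a=-1$ (namely exactly those with $d$ odd and $d\mid\gcd(a,m)$, using that $a$ is odd), and then recognising the divisor sum $\sum_{d\mid\gcd(a,m)}\varphi(2d)=\gcd(a,m)$ --- together with the analogous but slightly different $2$-adic bookkeeping that controls when $C$ is empty.
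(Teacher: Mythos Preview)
Your proof is correct and follows essentially the same route as the paper: both reduce via Corollary~\ref{zeroeigenvalue} and Lemma~\ref{simple_nut} to the condition $|1+\zeta^a+\zeta^b|^2=1$, recognise that this is equivalent to $\zeta^a=-1$ or $\zeta^b=-1$ or $\zeta^{a-b}=-1$, and then count solutions. The only differences are cosmetic --- the paper reaches the three-way disjunction by factoring $\zeta^{b}\bigl(|1+\zeta^a+\zeta^b|^2-1\bigr)$ directly as $(\zeta^{b-a}+1)(\zeta^a+1)(\zeta^b+1)$ and counts solutions of $x^\ell=-1$ via the linear congruence $k\ell\equiv_m m/2$, whereas you use the real-part identity with a polar-form argument for the factorisation and a totient divisor-sum for the count.
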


The purpose of the present section is to give the proof of Theorem \ref{b3_main_th}. We start by observing the next result.

\begin{lemma}\label{b3_poly_lemma}
    The graph $B_3(m; a, b)$ is a nut graph if and only if the numbers $a$ and $b$ are odd, while the polynomial
    \begin{equation}\label{b3_aux_0}
        (x^{b - a} + 1)(x^a + 1)(x^b + 1)
    \end{equation}
    is not divisible by $\Phi_f(x)$ for any $f \ge 3$ such that $f \mid m$.
\end{lemma}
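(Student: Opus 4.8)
The plan is to follow the same template used for Lemmas~\ref{b1_poly_lemma} and \ref{b2_poly_lemma}, namely to combine Lemma~\ref{simple_nut} with Corollary~\ref{zeroeigenvalue} and then clean up the resulting trigonometric identity into a polynomial divisibility condition. Concretely, for a graph in $\cB_3$ we have $S = T = \{\frac{m}{2}\}$ and $R = \{0, a, b\}$, so $\lambda_k^S = \lambda_k^T = \omega_m^{k m/2} = (-1)^k$ and $\lambda_k^R = 1 + \omega_m^{ka} + \omega_m^{kb}$. By Corollary~\ref{zeroeigenvalue}, $B_3(m;a,b)$ has eigenvalue $0$ precisely when $|1 + \zeta^a + \zeta^b|^2 = \lambda_k^S \lambda_k^T = ((-1)^k)^2 = 1$ for some $m$-th root of unity $\zeta = \omega_m^k$, and it is a nut graph (using Lemma~\ref{simple_nut}, which applies since $B_3$ is not bipartite) iff this happens for exactly one such $\zeta$.

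The next step is to identify that "exactly one'' root. First I would check $\zeta = 1$: there $|1+1+1|^2 = 9 \neq 1$, so $\zeta = 1$ never works. Then I would check $\zeta = -1$ (which exists since $m$ is even): there $|1 + (-1)^a + (-1)^b|^2 = 1$ iff $(-1)^a + (-1)^b = 0$ or $-2$, i.e.\ iff $a,b$ have opposite parity or are both odd; but the standing assumption is that $a,b$ have the same parity, so $\zeta = -1$ works iff $a$ and $b$ are both odd. This shows that the parity condition in the statement is exactly the requirement that the designated exceptional root $\zeta = -1$ be a solution — if $a, b$ are both even, then $\zeta = -1$ fails, $\zeta = 1$ fails, and any genuine solution comes in conjugate pairs $\zeta, \overline{\zeta}$, so the number of solutions is even and hence never $1$, so $B_3(m;a,b)$ is not a nut graph. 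Thus we may henceforth assume $a, b$ are both odd, and the nut condition becomes: $|1 + \zeta^a + \zeta^b|^2 = 1$ holds for \emph{no} $m$-th root of unity $\zeta \neq -1$.

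The remaining step is to turn $|1 + \zeta^a + \zeta^b|^2 = 1$ into the claimed factored polynomial. Expanding, $|1 + \zeta^a + \zeta^b|^2 = 3 + (\zeta^a + \zeta^{-a}) + (\zeta^b + \zeta^{-b}) + (\zeta^{a-b} + \zeta^{b-a})$, so the equation is $(\zeta^a + \zeta^{-a}) + (\zeta^b + \zeta^{-b}) + (\zeta^{a-b} + \zeta^{b-a}) + 2 = 0$. Multiplying through by $\zeta^b$ (say; one picks the shift that clears all denominators given $0 < a < b < m$, so something like $\zeta^b$ or $\zeta^{a}$ works after a short check) produces an honest integer polynomial in $\zeta$, and one recognizes it as $\zeta^{b-a}(x^{b-a}+1)(x^a+1)(x^b+1)$ evaluated at $x = \zeta$ — this is the routine identity $(x^{b-a}+1)(x^a+1)(x^b+1) = x^{a+b} + x^{b} + x^{2b-a} + x^{b-a} + x^{a+b-\text{(stuff)}} + \dots$; concretely $(x^a+1)(x^b+1) = x^{a+b} + x^a + x^b + 1$ and then multiplying by $(x^{b-a}+1)$ and dividing by the appropriate monomial recovers $2 + (x^a + x^{-a}) + (x^b + x^{-b}) + (x^{b-a} + x^{a-b})$ up to the factor $x^{b-a}$, which one verifies by direct expansion. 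Since $\zeta = -1$ is always a root of $(x^{b-a}+1)(x^a+1)(x^b+1)$ when $a,b$ are odd, the condition "$|1+\zeta^a+\zeta^b|^2 = 1$ for no $\zeta \neq -1$'' is equivalent to: the product $(x^{b-a}+1)(x^a+1)(x^b+1)$ has no $m$-th root of unity other than $\pm 1$ among its roots; and since $\zeta = 1$ is never a root of $x^k + 1$, this is equivalent to $\Phi_f(x) \nmid (x^{b-a}+1)(x^a+1)(x^b+1)$ for every $f \geq 3$ with $f \mid m$, by the irreducibility of cyclotomic polynomials. The main thing to get right is the bookkeeping of which monomial shift clears all the negative exponents and the verification that the shifted Laurent polynomial is exactly the stated triple product; I expect that to be a short but error-prone computation rather than a conceptual obstacle, and also to double-check that the simplicity of the eigenvalue $0$ is genuinely equivalent to uniqueness of the root (which is exactly what Corollary~\ref{zeroeigenvalue} gives, since for $\cB_3$ the degenerate case $\lambda_k^S = \lambda_k^T = 0$ cannot occur as $(-1)^k \neq 0$).
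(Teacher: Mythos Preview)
Your proposal is correct and follows essentially the same route as the paper: invoke Lemma~\ref{simple_nut} and Corollary~\ref{zeroeigenvalue}, reduce to $|1+\zeta^a+\zeta^b|^2=1$, rule out $\zeta=1$, pin down $\zeta=-1$ as the unique admissible solution (forcing $a,b$ odd under the same-parity convention), expand the modulus, clear denominators, and factor. One small correction: after multiplying the Laurent expression $2+(\zeta^a+\zeta^{-a})+(\zeta^b+\zeta^{-b})+(\zeta^{b-a}+\zeta^{a-b})$ by $\zeta^b$ you get \emph{exactly} $(x^{b-a}+1)(x^a+1)(x^b+1)$ evaluated at $x=\zeta$, with no stray monomial factor $\zeta^{b-a}$---you can verify this by expanding $(x^a+1)(x^b+1)=x^{a+b}+x^a+x^b+1$ and then $(x^{b-a}+1)(x^{a+b}+x^a+x^b+1)=x^{2b}+x^{2b-a}+x^{a+b}+2x^b+x^{b-a}+x^a+1$, which matches term for term.
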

\begin{proof}
    By Lemma~\ref{simple_nut} and Corollary~\ref{zeroeigenvalue}, we have that $B_3(m; a, b)$ is a nut graph if and only if
    \begin{equation}\label{b3_aux_1}
        \zeta^\frac{m}{2} \cdot \zeta^\frac{m}{2} = |1 + \zeta^a + \zeta^b|^2
    \end{equation}
    holds for exactly one $m$-th root of unity $\zeta$. Note that \eqref{b3_aux_1} is not satisfied for $\zeta = 1$, and if \eqref{b3_aux_1} holds for a nonreal $\zeta$, then it also holds for $\overline{\zeta} \neq \zeta$. Therefore, since $|1 + \zeta^a + \zeta^b|^2 = (1 + \zeta^a + \zeta^b)(1 + \zeta^{-a} + \zeta^{-b})$, we conclude that $B_3(m; a, b)$ is a nut graph if and only if
    \begin{equation}\label{b3_aux_2}
        \zeta^a + \zeta^{-a} + \zeta^b + \zeta^{-b} + \zeta^{a - b} + \zeta^{-a + b} + 2 = 0
    \end{equation}
    holds only for $\zeta = -1$ among all the $m$-th roots of unity $\zeta$. Clearly, \eqref{b3_aux_2} is satisfied for $\zeta = -1$ if and only if $a$ and $b$ are odd. The result now follows from the irreducibility of cyclotomic polynomials after multiplying \eqref{b3_aux_2} by $\zeta^b$ and factoring accordingly.
\end{proof}

We now in a position to complete the proof of Theorem \ref{b3_main_th}.

\begin{proof}[Proof of Theorem \ref{b3_main_th}]
If $a$ and $b$ are even, then Lemma~\ref{b3_poly_lemma} implies that $B_3(m; a, b)$ is not a nut graph, which agrees with Theorem \ref{b3_main_th}. Now, suppose that the numbers $a$ and $b$ are odd. In this case, we trivially observe through Lemma~\ref{b3_poly_lemma} that $B_3(m; a, b)$ is a nut graph if and only if each of the equations $x^a = -1$, $x^b = -1$ and $x^{b - a} = -1$ has at most one solution among the $m$-th roots of unity, namely $-1$.

For any $\ell \in \mathbb{N}$, observe that the equation $x^\ell = -1$ in $x \in \mathbb{C}, \, x^m = 1$, is equivalent to the equation
\begin{equation}\label{aux_13}
   k \ell \equiv_m \frac{m}{2}
\end{equation}
in $k \in \Z_m$.
Thus, these two equations have the same number of solutions. Equation \eqref{aux_13} is a linear congruence relation, which is solvable if and only if $\gcd(m, \ell) \mid \frac{m}{2}$ (see, for example, \cite[p.~170, Theorem~5.14]{Tattersall}).
Moreover, in the case it is solvable, it has $\gcd(m, \ell)$ distinct solutions in $\Z_m$. 
Thus, the solution to $x^\ell = -1$ exists if and only if $\gcd(m, \ell) \mid \frac{m}{2}$ and it is unique if and only if $\gcd(m, \ell) = 1$.

Since $a$ and $b$ are both odd, $B_3(m; a, b)$ is a nut graph if and only if the equations $x^a = -1$ and $x^b = -1$ have the unique solution $-1$ among the $m$-th roots of unity, while $x^{b - a} = -1$ has no solutions over the same set. In other words, $B_3(m; a, b)$ is a nut graph if and only if $\gcd(m, a) = 1$, $\gcd(m, b) = 1$ and $\gcd(m, b - a) \nmid \frac{m}{2}$. Note that $\gcd(m, b - a) \nmid \frac{m}{2}$ is satisfied if and only if $v_2(b - a) \ge v_2(m)$.
\end{proof}

\section{Conclusion}

In this paper, we have given the complete classification of quartic bicirculant nut graphs (abbrev.\ QBN graphs). Using a computer, we have also enumerated the connected quartic bicirculants, the nonbipartite connected quartic bicirculants, and the QBN graphs up to order $50$. Among these nut graphs we have identified the vertex-transitive (abbrev.~VT), Cayley and circulant graphs. These results are also stratified with respect to the classes $\cB_1$, $\cB_2$ and $\cB_3$; see Table \ref{tab:enumeration}. From the table, we can see that the unique smallest circulant QBN graph is $B_2(4; 1, 1, 1) \cong \Circ(8, \{1, 2\})$ in Figure~\ref{fig:smallQBN}(a). The unique smallest non-VT QBN graph is $B_2(6; 1, 2, 3)$; see Figure~\ref{fig:smallQBN}(b). The smallest non-Cayley VT QBN graph is also unique; it is the graph $B_2(15; 3, 6, 5)$ in Figure~\ref{fig:smallQBN}(c). This is the ``standard'' way of drawing
a bicirculant; an alternative drawing of the same graph is in Figure~\ref{fig:smallQBN}(d). This graph has also been reported in \cite[Figure~5(b)]{BaFowPi2024}. Up to order $50$, it is the only example of a non-Cayley VT QBN graph.

There are three nonisomorphic
noncirculant Cayley QBN graphs of order 20; these are the graphs $B_2(10; 1, 3, 5)$, $B_2(10; 2, 4, 5)$ and $B_3(10; 1, 3)$;
see Figure~\ref{fig:smallQBN}(e)--(g). We have $B_3(10; 1, 3) \cong \Cay(\Dih(10), \{r^5, s, sr, sr^3\})$, where $\Dih(10) = \langle r, s \mid r^{10} = s^2 = (sr)^2 = 1 \rangle$ (i.e., $\Dih(10)$ is the dihedral group of order $20$). On the other hand, $B_2(10, 1, 3, 5) \cong \Cay(F_{20}, \{rs, s^2r^{-1}, s^2\})$ and $B_2(10; 2, 4, 5) \cong \Cay(F_{20}, \{r, s\})$, where $F_{20} = \langle r, s \mid r^{5} = s^4 = srs^{-1}r^{-2} = 1 \rangle \cong \mathbb{Z}_5 \rtimes \mathbb{Z}_4$ (i.e., $F_{20}$ is the unique Frobenius group of order $20$).

\begin{table}[!ht]
\centering
\subcaptionbox{All classes}{
\begin{tabular}{r|r|r|r||r|r|r}
$n$ & $\mathfrak{C}_n$ & $\mathfrak{B}_n$ & $\mathfrak{N}_n$ & $\mathfrak{V}_n$ & $\mathfrak{Y}_n$ & $\mathfrak{Z}_n$ \\
\hline \hline
8 & 3 & 2 & 1 & 1 & 1 & 1\\
10 & 3 & 2 & 1 & 1 & 1 & 1\\
12 & 12 & 9 & 3 & 2 & 2 & 2\\
14 & 8 & 6 & 5 & 2 & 2 & 2\\
16 & 17 & 12 & 6 & 3 & 3 & 3\\
18 & 17 & 13 & 7 & 2 & 2 & 2\\
20 & 35 & 25 & 17 & 7 & 7 & 4\\
22 & 19 & 15 & 14 & 4 & 4 & 4\\
24 & 69 & 48 & 18 & 6 & 6 & 4\\
26 & 28 & 21 & 20 & 5 & 5 & 5\\
28 & 64 & 44 & 36 & 8 & 8 & 6\\
30 & 69 & 52 & 27 & 6 & 5 & 4\\
32 & 71 & 48 & 30 & 7 & 7 & 7\\
34 & 47 & 36 & 35 & 7 & 7 & 7\\
36 & 133 & 91 & 53 & 8 & 8 & 6\\
38 & 59 & 45 & 44 & 8 & 8 & 8\\
40 & 159 & 107 & 63 & 13 & 13 & 8\\
42 & 125 & 94 & 55 & 8 & 8 & 6\\
44 & 151 & 103 & 93 & 14 & 14 & 10\\
46 & 86 & 66 & 65 & 10 & 10 & 10\\
48 & 266 & 173 & 76 & 12 & 12 & 8\\
50 & 122 & 93 & 78 & 9 & 9 & 9\\
\end{tabular}
}
\hspace{1cm}
\subcaptionbox{Class $\cB_2$}{
\begin{tabular}{r|r|r|r||r|r|r}
$n$ & $\mathfrak{C}_n$ & $\mathfrak{B}_n$ & $\mathfrak{N}_n$ & $\mathfrak{V}_n$ & $\mathfrak{Y}_n$ & $\mathfrak{Z}_n$ \\
\hline \hline
8 & 2 & 1 & 1 & 1 & 1 & 1\\
10 & 2 & 2 & 1 & 1 & 1 & 1\\
12 & 8 & 7 & 3 & 2 & 2 & 2\\
14 & 6 & 6 & 5 & 2 & 2 & 2\\
16 & 11 & 8 & 6 & 3 & 3 & 3\\
18 & 13 & 13 & 7 & 2 & 2 & 2\\
20 & 23 & 20 & 16 & 6 & 6 & 4\\
22 & 15 & 15 & 14 & 4 & 4 & 4\\
24 & 46 & 37 & 17 & 5 & 5 & 4\\
26 & 21 & 21 & 20 & 5 & 5 & 5\\
28 & 42 & 37 & 33 & 6 & 6 & 6\\
30 & 52 & 52 & 27 & 6 & 5 & 4\\
32 & 49 & 39 & 30 & 7 & 7 & 7\\
34 & 36 & 36 & 35 & 7 & 7 & 7\\
36 & 90 & 79 & 49 & 6 & 6 & 6\\
38 & 45 & 45 & 44 & 8 & 8 & 8\\
40 & 111 & 91 & 61 & 11 & 11 & 8\\
42 & 94 & 94 & 55 & 8 & 8 & 6\\
44 & 104 & 91 & 87 & 10 & 10 & 10\\
46 & 66 & 66 & 65 & 10 & 10 & 10\\
48 & 185 & 150 & 75 & 11 & 11 & 8\\
50 & 93 & 93 & 78 & 9 & 9 & 9\\
\end{tabular}
}

\bigskip

\subcaptionbox{Class $\cB_1$}{
\begin{tabular}{r|r|r|r||r|r|r}
$n$ & $\mathfrak{C}_n$ & $\mathfrak{B}_n$ & $\mathfrak{N}_n$ & $\mathfrak{V}_n$ & $\mathfrak{Y}_n$ & $\mathfrak{Z}_n$ \\
\hline \hline
8 & 1 & 1 & 0 & 0 & 0 & 0\\
12 & 3 & 2 & 1 & 1 & 1 & 1\\
16 & 3 & 3 & 0 & 0 & 0 & 0\\
20 & 6 & 4 & 1 & 1 & 1 & 1\\
24 & 7 & 7 & 0 & 0 & 0 & 0\\
28 & 7 & 5 & 2 & 1 & 1 & 1\\
32 & 6 & 6 & 0 & 0 & 0 & 0\\
36 & 11 & 8 & 3 & 1 & 1 & 1\\
40 & 10 & 10 & 0 & 0 & 0 & 0\\
44 & 11 & 8 & 3 & 1 & 1 & 1\\
48 & 14 & 14 & 0 & 0 & 0 & 0\\
\end{tabular}
}
\hspace{1cm}
\subcaptionbox{Class $\cB_3$}{
\begin{tabular}{r|r|r|r||r|r|r}
$n$ & $\mathfrak{C}_n$ & $\mathfrak{B}_n$ & $\mathfrak{N}_n$ & $\mathfrak{V}_n$ & $\mathfrak{Y}_n$ & $\mathfrak{Z}_n$ \\
\hline \hline
8 & 1 & 1 & 0 & 0 & 0 & 0\\
12 & 3 & 2 & 1 & 1 & 1 & 1\\
16 & 3 & 3 & 0 & 0 & 0 & 0\\
20 & 4 & 3 & 2 & 2 & 2 & 1\\
24 & 5 & 5 & 1 & 1 & 1 & 0\\
28 & 6 & 4 & 3 & 3 & 3 & 1\\
32 & 5 & 5 & 0 & 0 & 0 & 0\\
36 & 8 & 6 & 3 & 3 & 3 & 1\\
40 & 7 & 7 & 2 & 2 & 2 & 0\\
44 & 8 & 6 & 5 & 5 & 5 & 1\\
48 & 11 & 11 & 1 & 1 & 1 & 0\\
\end{tabular}
}
\caption{$\mathfrak{C}_n$, $\mathfrak{B}_n$, $\mathfrak{N}_n$, $\mathfrak{V}_n$, $\mathfrak{Y}_n$ and $\mathfrak{Z}_n$ denote the number (up to isomorphism) of connected quartic bicirculants, nonbipartite connected quartic bicirculants, QBN graphs, VT QBN graphs, Cayley QBN graphs and circulant QBN graphs, of order $n$, respectively.}
\label{tab:enumeration}
\end{table}

\begin{figure}[!htbp]
\centering
\subcaptionbox{$\Circ(8, \{1, 2\})$}{
\quad\begin{tikzpicture}[scale=1.8]
\tikzstyle{vertex}=[draw,circle,font=\scriptsize,minimum size=3pt,inner sep=1pt,fill=magenta!80!white]
\tikzstyle{edge}=[draw,thick]
\foreach \i in {1,3,...,7} {
	\node[vertex] (v\i) at ({45 *\i + 22.5}:1) {};
}
\foreach \i in {0,2,...,6} {
	\node[vertex,fill=green!80!white] (v\i) at ({45 *\i + 22.5}:1) {};
}
\foreach \i in {0,1,...,7} {
\pgfmathtruncatemacro{\j}{mod(\i + 1, 8)}
\pgfmathtruncatemacro{\k}{mod(\i + 2, 8)}
\path[edge] (v\i) -- (v\j);
\path[edge] (v\i) -- (v\k);
}
\end{tikzpicture}\quad
}
\quad
\subcaptionbox{$B_2(6; 1, 2, 3)$}{
\begin{tikzpicture}[scale=1.0]
\tikzstyle{vertex}=[draw,circle,font=\scriptsize,minimum size=3pt,inner sep=1pt,fill=magenta!80!white]
\tikzstyle{edge}=[draw,thick]
\foreach \i in {0,1,...,5} {
	\node[vertex] (v\i) at ({60 *\i + 0}:2) {};
	\node[vertex,fill=green!80!white] (u\i) at ({60 *\i + 20}:0.7) {};
}
\foreach \i in {0,1,...,5} {
\pgfmathtruncatemacro{\j}{mod(\i + 2, 6)}
\pgfmathtruncatemacro{\k}{mod(\i + 1, 6)}
\pgfmathtruncatemacro{\l}{mod(\i + 3, 6)}
\path[edge] (v\i) -- (v\j);
\path[edge] (u\i) -- (u\k);
\path[edge] (v\i) -- (u\i);
\path[edge] (v\i) -- (u\l);
}
\end{tikzpicture}
}
\quad
\subcaptionbox{$B_2(15; 3, 6, 5)$}{
\begin{tikzpicture}[scale=0.85,rotate=90]
\tikzstyle{vertex}=[draw,circle,font=\scriptsize,minimum size=3pt,inner sep=1pt,fill=magenta!80!white]
\tikzstyle{edge}=[draw,thick]
\foreach \i in {0,1,...,14} {
	\node[vertex] (v\i) at ({24*\i + 0}:2.2) {};
	\node[vertex,fill=green!80!white] (u\i) at ({24*\i - 2 * 24 - 12}:0.8) {};
}
\foreach \i in {0,1,...,14} {
\pgfmathtruncatemacro{\j}{mod(\i + 3, 15)}
\pgfmathtruncatemacro{\k}{mod(\i + 6, 15)}
\pgfmathtruncatemacro{\l}{mod(\i + 5, 15)}
\path[edge] (v\i) -- (v\j);
\path[edge] (u\i) -- (u\k);
\path[edge] (v\i) -- (u\i);
\path[edge] (v\i) -- (u\l);
}
\end{tikzpicture}
}

\bigskip
\subcaptionbox{$B_2(15; 3, 6, 5)$}{
\begin{tikzpicture}[scale=0.8]
\pgfmathtruncatemacro{\off}{7}
\tikzstyle{vertex}=[draw,circle,font=\scriptsize,minimum size=3pt,inner sep=1pt]
\tikzstyle{edge}=[draw,thick]
\tikzstyle{col1}=[fill=magenta!80!white]
\tikzstyle{col2}=[fill=green!80!white]
\foreach[count=\i] \x/\color in {17/col2,26/col2,20/col2,29/col2,23/col2} {
	\node[vertex,\color] (v\x) at ($ (0*60:2) + ({72*\i + \off}:0.6) $) {};
}
\foreach[count=\i] \x/\color in {2/col1,11/col1,5/col1,14/col1,8/col1} {
	\node[vertex,\color] (v\x) at ($ (1*60:2) + ({72*\i + \off}:0.6) $) {};
}
\foreach[count=\i] \x/\color in {22/col2,16/col2,25/col2,19/col2,28/col2} {
	\node[vertex,\color] (v\x) at ($ (2*60:2) + ({72*\i + \off}:0.6) $) {};
}
\foreach[count=\i] \x/\color in {7/col1,1/col1,10/col1,4/col1,13/col1} {
	\node[vertex,\color] (v\x) at ($ (3*60:2) + ({72*\i + \off}:0.6) + (-0.5, 0) $) {};
}
\foreach[count=\i] \x/\color in {27/col2,21/col2,15/col2,24/col2, 18/col2} {
	\node[vertex,\color] (v\x) at ($ (4*60:2) + ({72*\i + \off}:0.6) + (-0.3, -0.3)$) {};
}
\foreach[count=\i] \x/\color in {12/col1,6/col1,0/col1,9/col1,3/col1} {
	\node[vertex,\color] (v\x) at ($ (5*60:2) + ({72*\i + \off}:0.6) + (0.3, -0.3)$) {};
}
\path[edge] (v2) -- (v17);
\path[edge] (v11) -- (v26);
\path[edge] (v5) -- (v20);
\path[edge] (v14) -- (v29);
\path[edge] (v8) -- (v23);
\path[edge] (v2) -- (v22);
\path[edge] (v11) -- (v16);
\path[edge] (v5) -- (v25);
\path[edge] (v14) -- (v19);
\path[edge] (v8) -- (v28);
\path[edge] (v7) -- (v22);
\path[edge] (v1) -- (v16);
\path[edge] (v10) -- (v25);
\path[edge] (v4) -- (v19);
\path[edge] (v13) -- (v28);
\path[edge] (v7) -- (v27);
\path[edge] (v1) -- (v21);
\path[edge] (v10) -- (v15);
\path[edge] (v4) -- (v24);
\path[edge] (v13) -- (v18);
\path[edge] (v12) -- (v27);
\path[edge] (v6) -- (v21);
\path[edge] (v0) -- (v15);
\path[edge] (v9) -- (v24);
\path[edge] (v3) -- (v18);
\path[edge] (v12) -- (v17);
\path[edge] (v6) -- (v26);
\path[edge] (v0) -- (v20);
\path[edge] (v9) -- (v29);
\path[edge] (v3) -- (v23);
\path[edge] (v2) -- (v5) -- (v8) -- (v11) -- (v14) -- (v2);
\path[edge] (v17) -- (v26) -- (v20) -- (v29) -- (v23) -- (v17);
\path[edge] (v12) -- (v0) -- (v3) -- (v6) -- (v9) -- (v12);
\path[edge] (v27) -- (v21) -- (v15) -- (v24) -- (v18) -- (v27);
\path[edge] (v7) -- (v10) -- (v13) -- (v1) -- (v4) -- (v7);
\path[edge] (v22) -- (v16) -- (v25) -- (v19) -- (v28) -- (v22);
\end{tikzpicture}
}\quad
\subcaptionbox{$B_3(10; 1, 3)$}{
\begin{tikzpicture}[scale=1.0]
\pgfmathtruncatemacro{\off}{9}
\tikzstyle{vertex}=[draw,circle,font=\scriptsize,minimum size=3pt,inner sep=1pt]
\tikzstyle{edge}=[draw,thick]
\tikzstyle{col1}=[fill=magenta!80!white]
\tikzstyle{col2}=[fill=green!80!white]
\foreach[count=\i] \x/\color in {0/col1,5/col2,13/col1,18/col2} {
	\node[vertex,\color] (v\x) at ($ (0*72:1.5) + ({90*\i + \off}:0.5) $) {};
}
\foreach[count=\i] \x/\color in {3/col2, 8/col1,11/col1,16/col2} {
	\node[vertex,\color] (v\x) at ($ (1*72:1.5) + ({90*\i + \off + 2*90}:0.5) $) {};
}
\foreach[count=\i] \x/\color in {1/col2,6/col1,19/col1,14/col2} {
	\node[vertex,\color] (v\x) at ($ (2*72:1.5) + ({90*\i + \off}:0.5) $) {};
}
\foreach[count=\i] \x/\color in { 9/col2,4/col1,12/col2,17/col1} {
	\node[vertex,\color] (v\x) at ($ (3*72:1.5) + ({-90*\i + \off + 90}:0.5) $) {};
}
\foreach[count=\i] \x/\color in {2/col1,7/col2,10/col2,15/col1} {
	\node[vertex,\color] (v\x) at ($ (4*72:1.5) + ({-90*\i + \off - 90}:0.5 ) $) {};
}
\path[edge] (v3) -- (v8) -- (v11) -- (v16) -- (v3);
\path[edge] (v1) -- (v6) -- (v19) -- (v14) -- (v1);
\path[edge] (v4) -- (v9) -- (v12) -- (v17) -- (v4);
\path[edge] (v10) -- (v7) -- (v2) -- (v15) -- (v10);
\path[edge] (v0) -- (v13) -- (v18) -- (v5) -- (v0);
\path[edge] (v5) -- (v16);
\path[edge] (v13) -- (v3);
\path[edge] (v18) -- (v8);
\path[edge] (v11) -- (v0);
\path[edge] (v6) -- (v16);
\path[edge] (v14) -- (v3);
\path[edge] (v19) -- (v8);
\path[edge] (v11) -- (v1);
\path[edge] (v17) -- (v6);
\path[edge] (v14) -- (v4);
\path[edge] (v19) -- (v9);
\path[edge] (v12) -- (v1);
\path[edge] (v17) -- (v7);
\path[edge] (v15) -- (v4);
\path[edge] (v10) -- (v9);
\path[edge] (v12) -- (v2);
\path[edge] (v2) -- (v13);
\path[edge] (v15) -- (v5);
\path[edge] (v7) -- (v18);
\path[edge] (v10) -- (v0);
\end{tikzpicture}
}
\subcaptionbox{$B_2(10; 1, 3, 5)$}{
\begin{tikzpicture}[scale=1.0]
\tikzstyle{vertex}=[draw,circle,font=\scriptsize,minimum size=3pt,inner sep=1pt,fill=magenta!80!white]
\tikzstyle{edge}=[draw,thick]
\foreach \i in {0,1,...,9} {
	\node[vertex] (v\i) at ({36 *\i + 0}:2) {};
	\node[vertex,fill=green!80!white] (u\i) at ({36 *\i + 165}:0.9) {};
}
\foreach \i in {0,1,...,9} {
\pgfmathtruncatemacro{\j}{mod(\i + 3, 10)}
\pgfmathtruncatemacro{\k}{mod(\i + 1, 10)}
\pgfmathtruncatemacro{\l}{mod(\i + 5, 10)}
\path[edge] (v\i) -- (v\j);
\path[edge] (u\i) -- (u\k);
\path[edge] (v\i) -- (u\i);
\path[edge] (v\i) -- (u\l);
}
\end{tikzpicture}
}

\bigskip
\subcaptionbox{$B_2(10; 2, 4, 5)$}{
\begin{tikzpicture}[scale=1.0]
\tikzstyle{vertex}=[draw,circle,font=\scriptsize,minimum size=3pt,inner sep=1pt,fill=magenta!80!white]
\tikzstyle{edge}=[draw,thick]
\foreach \i in {1,3,...,9} {
	\node[vertex,fill=green!80!white] (v\i) at ($ (0, 0) + ({36*\i + 0}:0.5) $) {};
}
\foreach \i in {11,13,...,19} {
	\node[vertex] (v\i) at ($ (3, 0) + ({36*\i + 0}:0.5) $) {};
	\pgfmathtruncatemacro{\j}{\i - 10}
	\path[edge] (v\i) -- (v\j);
}
\foreach \i in {10,12,...,18} {
	\node[vertex] (v\i) at ($ (1, -3) + ({36*\i + 180}:0.5) $) {};
}
\foreach \i in {0,2,...,8} {
	\node[vertex,fill=green!80!white] (v\i) at ($ (4,-3) + ({36*\i + 180}:0.5) $) {};
}
\path[edge] (v1) -- (v16);
\path[edge] (v9) -- (v14);
\path[edge] (v7) -- (v12);
\path[edge] (v5) -- (v10);
\path[edge] (v3) -- (v18);
\path[edge] (v6) -- (v16);
\path[edge] (v4) -- (v14);
\path[edge] (v2) -- (v12);
\path[edge] (v0) -- (v10);
\path[edge] (v8) -- (v18);
\path[edge] (v6) -- (v11);
\path[edge] (v4) -- (v19);
\path[edge] (v2) -- (v17);
\path[edge] (v0) -- (v15);
\path[edge] (v8) -- (v13);
\path[edge] (v1) -- (v3) -- (v5) -- (v7) -- (v9) -- (v1);
\path[edge] (v0) -- (v2) -- (v4) -- (v6) -- (v8) -- (v0);
\path[edge] (v11) -- (v15) -- (v19) -- (v13) -- (v17) -- (v11);
\path[edge] (v10) -- (v14) -- (v18) -- (v12) -- (v16) -- (v10);
\end{tikzpicture}
}
\caption{The smallest examples of QBN graphs. Kernel eigenvector entries are color-coded: one color represents entries $+1$,
while the other represents $-1$.}
\label{fig:smallQBN}
\end{figure}
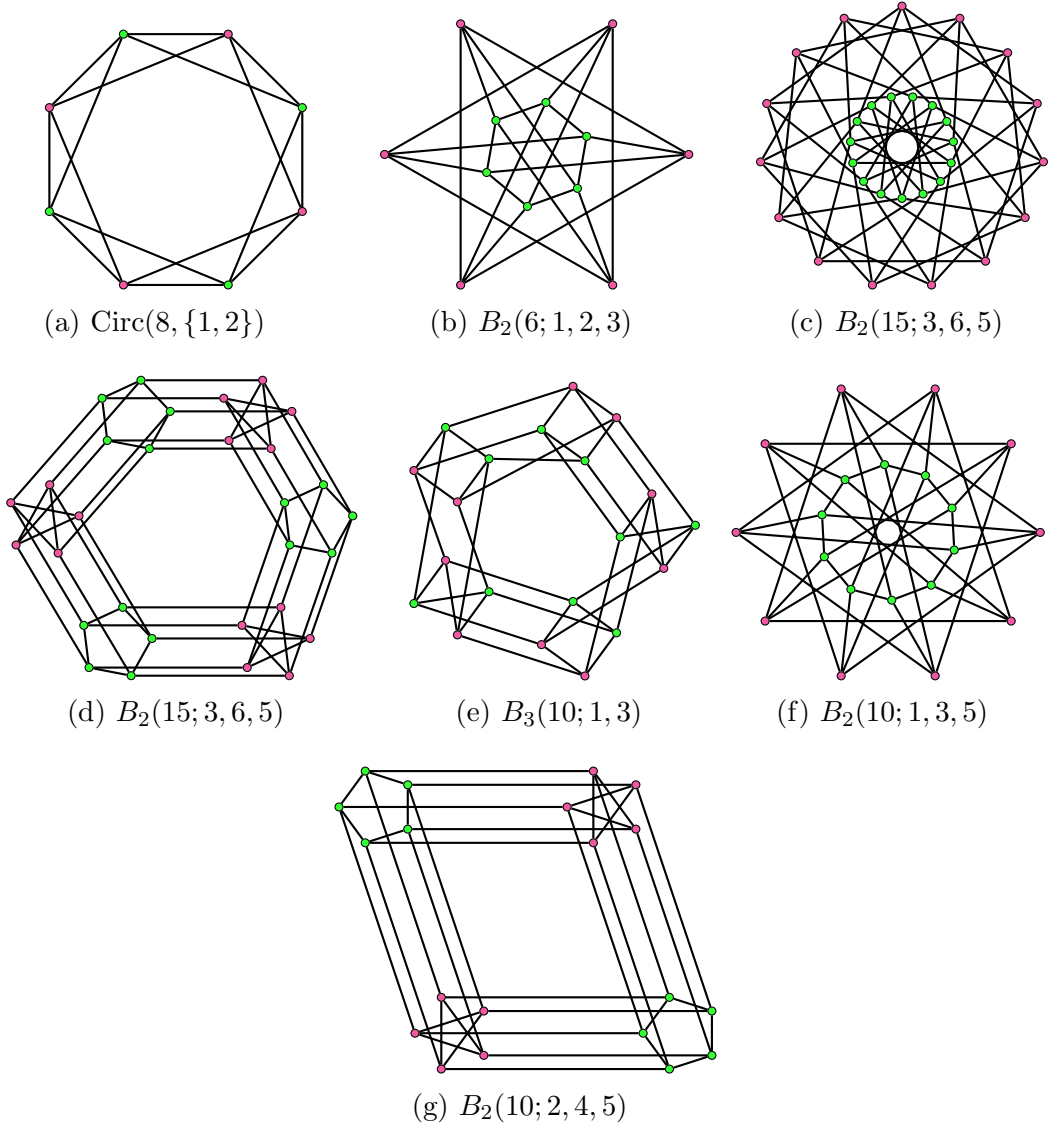

Observe that the QBN graphs from classes $\cB_1$ and $\cB_3$ are relatively rare in comparison with those from the class $\cB_2$. Furthermore, when $m = \frac{n}{2}$ is prime, then among the connected quartic bicirculants of class $\cB_2$, there is only one graph up to isomorphism that is not a nut graph, namely $B_2(m; 1, 1, 2)$. Indeed, in this case the conditions 2(ii), 2(iii) and 2(iv) from Theorem~\ref{main_theorem} are satisfied, so the graph $B_2(m; a, b, c)$ is a nut graph if and only if condition~2(i) also holds. Since $m$ is prime, we may assume without loss of generality that $a = 1$, and it is not difficult to further verify that $B_2(m; 1, b, c)$ is not a nut graph if and only if $b = 1$ and $c = 2$.

In Section \ref{sec:intro}, we noted the existence of quartic bicirculant graphs that simultaneously belong to more than one class from $\cB_1, \cB_2, \cB_3, \cB_4$. It is natural to pose the question whether a QBN graph can also belong to more than one class. The following proposition gives an affirmative answer.

\begin{proposition}\label{class_prop}
    For any $m \ge 6$ such that $m \equiv_4 2$, the following holds:
    \begin{enumerate}[1.]
        \item the graphs $B_2(m; 2, 2, \frac{m}{2})$ and $B_1(m; 2, 2)$ are isomorphic to $\Circ\left(2m, \left\{4, \frac{m}{2} \right\}\right)$ and it is a nut graph;
        \item the graphs $B_2(m; 1, 1, \frac{m}{2})$ and $B_3(m; 1, m - 1)$ are isomorphic to $\Circ\left(2m, \left\{ 2, \frac{m}{2} \right\}\right)$ and it is a nut graph.
    \end{enumerate}
\end{proposition}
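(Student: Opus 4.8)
The plan is, for each item, first to prove that the two listed bicirculants are both isomorphic to the indicated circulant, and then to read off the nut property from the classifications already obtained (Theorems~\ref{b1_main_th} and \ref{b3_main_th}). Throughout write $m = 2\ell$; the hypothesis $m \equiv_4 2$ says exactly that $\ell = \tfrac m2$ is odd.

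The first step is a single relabeling that serves both items. Let $p_i, q_i$ ($i \in \Z_m$) denote the two orbits of the relevant bicirculant in class $\cB_2$. I claim the parity-dependent map $\Psi$ with $\Psi(x_{2j}) = p_{2j}$, $\Psi(x_{2j+1}) = q_{2j+1}$, $\Psi(y_{2j}) = q_{2j}$ and $\Psi(y_{2j+1}) = p_{2j+1}$ is an isomorphism $B_1(m;2,2) \to B_2(m;2,2,\tfrac m2)$ as well as an isomorphism $B_3(m;1,m-1) \to B_2(m;1,1,\tfrac m2)$. It is a bijection because the four half-orbits $\{x_{2j}\},\{x_{2j+1}\},\{y_{2j}\},\{y_{2j+1}\}$ are sent onto the disjoint sets $\{p_{2j}\},\{q_{2j+1}\},\{q_{2j}\},\{p_{2j+1}\}$ whose union is everything; and since the graphs are quartic of the same order, it is enough to check that $\Psi$ maps edges to edges, which is a short case analysis over the (at most five) edge types of the source graph, split on the parity of the index. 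The mechanism is that, $\tfrac m2$ being odd, the within-part jump $\tfrac m2$ of $B_1$ (resp.\ of $B_3$) flips the index parity, so it is sent to one of the $x$--$y$ edges of $B_2$ coming from $R = \{0, \tfrac m2\}$; the jump-$2$ edges of $B_1$ stay within a part and match $B_2$'s jump-$2$ edges; and in the $B_3$ case the $r = \pm 1$ bipartite edges of $B_3$ become $B_2$'s within-part cycle edges.

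The second step identifies the two $\cB_2$ graphs with circulants. The linear map $p_i \mapsto 2i$, $q_i \mapsto 2i + \tfrac m2$ is a bijection $\Z_m \sqcup \Z_m \to \Z_{2m}$ onto the even and the odd residues (again because $\tfrac m2$ is odd). Under it an edge $p_i p_{i\pm b}$ or $q_i q_{i\pm b}$ becomes a jump $\pm 2b$ of $\Z_{2m}$, while the $R$-edges $p_i q_i$ and $p_i q_{i + m/2}$ become jumps $\pm \tfrac m2$ (using $\tfrac{3m}{2} \equiv -\tfrac m2 \pmod{2m}$). With $b = 2$ this gives $B_2(m;2,2,\tfrac m2) \cong \Circ(2m, \{4, \tfrac m2\})$ and with $b = 1$ it gives $B_2(m;1,1,\tfrac m2) \cong \Circ(2m, \{2, \tfrac m2\})$; combined with the previous step, all four bicirculants in the statement are isomorphic to the claimed circulants. (For $B_1(m;2,2)$ one could instead use the product identity $B_1(m;a,b) \cong I(\tfrac m2,\tfrac a2,\tfrac b2)\sqr K_2$ from the Remark after Theorem~\ref{b1_main_th} to get $B_1(m;2,2) \cong C_{m/2} \sqr K_2 \sqr K_2 \cong C_{m/2}\sqr C_4$, and, since $\gcd(4,\tfrac m2) = 1$, recognize $\Circ(2m,\{4,\tfrac m2\})$ as $C_4 \sqr C_{m/2}$ via the Chinese Remainder isomorphism $\Z_{2m} \cong \Z_4 \times \Z_{m/2}$.)

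It remains to see the graphs are nut graphs, and since this property is an isomorphism invariant it suffices to verify the numerical hypotheses of one classification theorem per item. For item~1, apply Theorem~\ref{b1_main_th} to $B_1(m;2,2)$: $m \equiv_4 2$ by assumption; $a = b = 2$ are even; $\gcd(\tfrac m2, 2, 2) = \gcd(\tfrac m2, 2) = 1$ since $\tfrac m2$ is odd; and if $5 \mid m$ then $a - b = 0$ is divisible by $5$. For item~2, apply Theorem~\ref{b3_main_th} to $B_3(m;1,m-1)$: the numbers $1$ and $m-1$ are odd, $\gcd(m,1) = \gcd(m, m-1) = 1$, and $v_2\big((m-1) - 1\big) = v_2(m-2) \ge 2 > 1 = v_2(m)$ because $m \equiv_4 2$. (One may also avoid these theorems: the eigenvalue of $\Circ(2m,\{4,\tfrac m2\})$, resp.\ $\Circ(2m,\{2,\tfrac m2\})$, at $\omega_{2m}^{\,j}$ is $2\cos\tfrac{4\pi j}{m} + 2\cos\tfrac{\pi j}{2}$, resp.\ $2\cos\tfrac{2\pi j}{m} + 2\cos\tfrac{\pi j}{2}$, and a short case check using $m \equiv_4 2$ shows it vanishes only for $j = m$, where the circulant eigenvector $(1,-1,1,-1,\dots)$ has no zero entry.) The only slightly delicate point in the whole argument is the edge-preservation check for $\Psi$: the ``standard'' orbit labelings of $B_1$ and $B_3$ are not aligned with any circulant labeling, so one really has to track how each edge type transforms under the parity-swap; but this is a finite, routine verification once the maps are on the table.
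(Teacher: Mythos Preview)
Your argument is correct. The organization differs slightly from the paper's: you route both items through $\cB_2$ via a single parity-swap bijection $\Psi$, and then use the linear map $p_i\mapsto 2i$, $q_i\mapsto 2i+\tfrac m2$ (which is exactly the paper's $f_1$) to reach the circulant. For item~2 this is literally the paper's proof factored in two steps, since the paper's parity-dependent map $f_2\colon B_3(m;1,m-1)\to\Circ(2m,\{2,\tfrac m2\})$ coincides with your composite $f_1\circ\Psi$. For item~1 the paper instead argues that both $B_1(m;2,2)$ and $B_2(m;2,2,\tfrac m2)$ are visibly $K_2\sqr K_2\sqr C_{m/2}\cong C_4\sqr C_{m/2}$ and then invokes $\gcd(4,\tfrac m2)=1$ to recognize the circulant---the route you mention as an alternative. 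Your uniform $\Psi$ has the virtue of treating both items with one mechanism; the paper's product identification for item~1 is shorter and avoids the edge-by-edge check. For the nut property, the paper simply cites the main theorem, while you spell out the verification of the hypotheses of Theorems~\ref{b1_main_th} and~\ref{b3_main_th}; your checks are accurate (in particular $v_2(m-2)\ge 2>1=v_2(m)$ is the right way to see the $\cB_3$ condition).
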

\begin{proof}
    Note that $B_1(m; 2, 2)$ is a nut graph by Theorem \ref{main_theorem}. Now observe that $B_2(m; 2, 2, \frac{m}{2})$ and $B_1(m; 2, 2)$ are both isomorphic to $K_2 \sqr K_2 \sqr C_{m / 2} \cong C_4 \sqr C_{m / 2}$, which is, in turn, isomorphic to $\Circ\left(2m, \left\{4, \frac{m}{2} \right\}\right)$ because $4$ and $\frac{m}{2}$ are coprime.

    By Theorem \ref{main_theorem}, we also have that $B_3(m; 1, m - 1)$ is a nut graph. Observe that the mapping $f_1 \colon B_2(m; 1, 1, \frac{m}{2}) \to \Circ\left(2m, \left\{2, \frac{m}{2} \right\} \right)$ defined by
    \[
        f_1(x_i) = 2i \qquad \mbox{and} \qquad f_1(y_i) = 2i + \frac{m}{2}
    \]
    is an isomorphism. Also, the mapping $f_2 \colon B_3(m; 1, m - 1) \to \Circ\left(2m, \left\{2, \frac{m}{2} \right\} \right)$ defined by
    \[
        f_2(x_i) = \begin{cases}
            2i, & \text{if } 2 \mid i,\\
            2i + \frac{m}{2}, & \text{if } 2 \nmid i,
        \end{cases} \qquad \mbox{and} \qquad f_2(y_i) = \begin{cases}
            2i + \frac{m}{2}, & \text{if } 2 \mid i,\\
            2i, & \text{if } 2 \nmid i
        \end{cases}
    \]
    is an isomorphism. Therefore, $B_2(m; 1, 1, \frac{m}{2}) \cong B_3(m; 1, n-1) \cong \Circ\left(2m, \left\{2, \frac{m}{2} \right\} \right)$.
\end{proof}

The computational results from Table \ref{tab:enumeration} lead to the following natural question.

\begin{problem}
    Apart from the two families from Proposition \ref{class_prop}, are there any additional QBN graphs that belong to more than one class from $\cB_1$, $\cB_2$ and $\cB_3$?
\end{problem}

It would be interesting to study the possible automorphism groups of QBN graphs. For example, the five QBN graphs $B_2(10; 1, 2, 1)$, $B_2(10; 1, 1, 1)$, $B_2(10; 1, 2, 5)$, $B_2(10; 1, 3, 5)$ and $B_1(10; 2, 2)$ are of the same order and have mutually nonisomorphic (full) automorphism groups. The general question about the automorphism groups of quartic bicirculant graphs remains largely open, while some results have been obtained in special cases; see \cite{RWCayley, RWiso, RWvt, Dobson2022, quarticet}.

\section*{Acknowledgements}

I.\ Damnjanović is supported in part by the Science Fund of the Republic of Serbia, grant \#6767, Lazy walk counts and spectral radius of threshold graphs --- LZWK. N.\ Bašić is supported in part by the Slovenian Research Agency (research program P1-0294 and research project J5-4596). T.\ Pisanski is supported in part by the Slovenian Research Agency (research program P1-0294 and research projects J1-4351 and J5-4596). A.\ \v{Z}itnik is supported in part by the Slovenian Research Agency (research program P1-0294 and research projects J1-3002 and J1-4351).

\section*{Conflict of interest}

The authors declare that they have no conflict of interest.

\appendix

\pagebreak
\section{\texttt{SageMath} script for Theorem \ref{b2_main_th}}\label{comp_b2}

\begin{lstlisting}[language = Python, frame = trBL, escapeinside={(*@}{@*)}, aboveskip=10pt, belowskip=10pt, numbers=left, rulecolor=\color{black}]
#!/usr/bin/sage
import sys
from sage.all import *

# R.<x> = PolynomialRing(QQ)
R = PolynomialRing(QQ, names=('x',))
(x,) = R._first_ngens(1)

values = sorted([
    7, 14, 28, 56, 5, 10, 20, 40, 3, 6, 12, 24, 15, 30, 60, 120, 2, 4, 8, 
])

def v2(n):
    count = 0
    while Mod(n, 2) == 0:
        count += 1
        n = n // 2
    return count

def polynomial_q(f, a, b, c):
    return x**Mod(a + b, f) + x**Mod(a - b, f) + x**Mod(b - a, f) + \
        x**Mod(-a - b, f) - 2 - x**Mod(c, f) - x**Mod(-c, f)

for f in values:
    cyc = R.cyclotomic_polynomial(f)
    print(f'Processing {f}')

    for a in range(1, f + 1):
        for b in range(1, f + 1):
            for c in range(1, f + 1):
                # Condition (i)
                if gcd(gcd(f, a - b), c + a + b) != 1:
                    continue
                if gcd(gcd(f, a - b), c - a - b) != 1:
                    continue
                if gcd(gcd(f, a + b), c + a - b) != 1:
                    continue
                if gcd(gcd(f, a + b), c - a + b) != 1:
                    continue

                # Condition (ii)
                if v2(c) == v2(a) + 1 and v2(f) > v2(c):
                    continue
                if v2(c) == v2(b) + 1 and v2(f) > v2(c):
                    continue
 
                # Check divisibility
                q = polynomial_q(f, a, b, c)
                (quot, rem) = q.quo_rem(cyc)
                if rem.is_zero():
                    print(f, a, b, c)
\end{lstlisting}

\end{document}